\def\UseSection{
      \numberwithin{equation}{section}
	\theoremstyle{plain}
      \newtheorem{theorem}    {Theorem}[section]
      \DefineTheorems 
}
\def\DefineTheorems{
	
	\newtheorem{lemma}      [theorem] {Lemma}
	
	\newtheorem{prop}       [theorem] {Proposition}
	
	\newtheorem{cor}        [theorem] {Corollary}

	\theoremstyle{definition}
	\newtheorem{defn}       [theorem] {Definition}

	\theoremstyle{definition}

}
\newcommand{\bt}   {\begin{theorem}}
\newcommand{\et}   {\end  {theorem}}
\newcommand{\bl}   {\begin{lemma}}
\newcommand{\el}   {\end  {lemma}}
\newcommand{\bp}   {\begin{prop}}
\newcommand{\ep}   {\end  {prop}}
\newcommand{\bc}   {\begin{cor}}
\newcommand{\ec}   {\end  {cor}}
\newcommand{\bd}   {\begin{defn}}
\newcommand{\ed}   {\end  {defn}}
\newcommand{\ba}   {\begin{array}}
\newcommand{\ea}   {\end  {array}}
\newcommand{\be}   {\begin{enumerate}}
\newcommand{\ee}   {\end  {enumerate}}
\newcommand{\bi}   {\begin{itemize}}
\newcommand{\ei}   {\end  {itemize}}
\def\eq#1\en{\begin{equation}#1\end{equation}}  
\def\eqsplit#1\ensplit{
	\begin{equation}\begin{split}#1\end{split}\end{equation}
	}
\def\eqalign#1\enalign{
	\begin{align}#1\end{align}
	}
\def\eqmul#1\enmul{
	\begin{multline}#1\end{multline}
	}
\newcommand{\eqarrstar} {\begin{eqnarray*}} 
\newcommand{\enarrstar} {\end{eqnarray*}} 
\newcommand{\eqarray}   {\begin{eqnarray}} 
\newcommand{\enarray}   {\end{eqnarray}}
\newcommand{\lbeq}[1]  {\label{e:#1}}
\newcommand{\refeq}[1] {\eqref{e:#1}}    
\newcommand{\labelcounter}[2]{{%
	\stepcounter{#1}
	\protected@write\@auxout{}%
	{\string\newlabel{#2}{{\csname the#1\endcsname}{\thepage}}}%
	{\ref{#2}}
	}}
\newcommand{\Nbold} {{\mathbb N}}
\newcommand{\Zbold} {{\mathbb Z}}
\newcommand{\spose}[1] {{\hbox to 0pt{#1\hss}} }
\newcommand{\ltapprox} {\mathrel{\spose{\lower 3pt\hbox{$\mathchar"218$}}
\raise 2.0pt\hbox{$\mathchar"13C$}}}
\newcommand{\gtapprox} {\mathrel{\spose{\lower 3pt\hbox{$\mathchar"218$}}
\raise 2.0pt\hbox{$\mathchar"13E$}}}
\newtheorem{THM}{Theorem}[section]
\newtheorem{COR}[THM]{Corollary}
\newtheorem{EXA}[THM]{Example}
\newtheorem{LEM}[THM]{Lemma}
\newtheorem{PRP}[THM]{Proposition}
\newtheorem{DEF}[THM]{Definition}
\newcommand{\nn}{\nonumber}
\newcommand{\hlf}{\frac{1}{2}}
\newcommand{\ra}{\rightarrow}
\newcommand{\lra}{\leftrightarrow}
\renewcommand{\to}      {\rightarrow}
\newcounter{countC}  
\newcounter{countR}  
\newcommand{\Z}{\Zbold}
\newcommand{\N}{\Nbold}
\newcommand{\floor}[1]{\lfloor #1 \rfloor}
\newcommand{\mc}[1]{\mathcal{#1}}
\newcommand{\mE}{\mathbb{E}}
\newcommand{\UD}{\updownarrow}
\newcommand{\LR}{\leftrightarrow}
\newcommand{\WE}{\LR}
\newcommand{\NS}{\UD}
\newcommand{\smallE}{\scriptstyle \rightarrow}
\newcommand{\smallW}{\scriptstyle \leftarrow}
\newcommand{\ssmallW}{\scriptscriptstyle \leftarrow}
\newcommand{\smallN}{\scriptstyle \uparrow}
\newcommand{\smallS}{\scriptstyle \downarrow}
\newcommand{\smallNS}{\scriptstyle \updownarrow}
\newcommand{\smallnw}{\scriptscriptstyle \nwarrow}
\newcommand{\smallsw}{\scriptscriptstyle \swarrow}
\newcommand{\NE}{\begin{picture}(,)
\put(2,-5){$\rightarrow$}
\put(0,.5){$\uparrow$}
\end{picture}\hspace{.5cm}
}
\newcommand{\smallNE}{\begin{picture}(,)
\put(1.5,-3){$\smallE$}
\put(0,.5){$\smallN$}
\end{picture}\hspace{.35cm}
}
\newcommand{\SE}{\begin{picture}(,)
\put(1.5,4.8){$\rightarrow$}
\put(-.5,-.5){$\downarrow$}
\end{picture}\hspace{.5cm}
}
\newcommand{\smallSE}{\begin{picture}(,)
\put(1.4,3.2){$\smallE$}
\put(,-.5){$\smallS$}
\end{picture}\hspace{.35cm}
}
\newcommand{\SW}{\begin{picture}(,)
\put(0,4.8){$\leftarrow$}
\put(8.2,-0.5){$\downarrow$}
\end{picture}\hspace{.5cm}
}
\newcommand{\NW}{\begin{picture}(,)
\put(0,-5){$\leftarrow$}
\put(8,0){$\uparrow$}
\end{picture}\hspace{.5cm}
}
\newcommand{\smallNW}{\begin{picture}(,)
\put(0.5,-3){$\smallW$}
\put(6.3,.5){$\smallN$}
\end{picture}\hspace{.35cm}
}
\newcommand{\NSE}{\begin{picture}(,)
\put(0,0){$\updownarrow$}
\put(2.2,0){$\rightarrow$}
\end{picture}\hspace{.5cm}
}
\newcommand{\SWE}{\begin{picture}(,)
\put(0,5){$\leftarrow$}
\put(5,5){$\rightarrow$}
\put(5.5,-0.5){$\downarrow$}
\end{picture}\hspace{.6cm}
}
\newcommand{\smallSWE}{\begin{picture}(,)
\put(0.5,2.5){$\smallW$}
\put(3.5,2.5){$\smallE$}
\put(3.8,-1.5){$\smallS$}
\end{picture}\hspace{.45cm}
}
\newcommand{\NWE}{\begin{picture}(,)
\put(0,-5){$\leftarrow$}
\put(5,-5){$\rightarrow$}
\put(5.5,0.5){$\uparrow$}
\end{picture}\hspace{.6cm}
}
\newcommand{\NSEWalt}{\begin{picture}(,)
\put(8.5,0){$\updownarrow$}
\put(0.5,0){$\longleftrightarrow$}
\end{picture}\hspace{.79cm}
}
\newcommand{\smallNSEWalt}{\begin{picture}(,)
\put(4.95,0){$\smallNS$}
\put(1,-1){$\leftrightarrow$}
\end{picture}\hspace{.45cm}
}
\newcommand{\smallOTSP}{\begin{picture}(,)
\put(6.2,0){$\smallN$}
\put(.3,-3.3){$\smallW$}
\put(1.7,0){$\smallnw$}
\end{picture}\hspace{.4cm}
}
\newcommand{\OTSP}{\begin{picture}(,)
\put(0.5,-5){$\leftarrow$}
\put(0.5,.2){$\nwarrow$}
\put(8.7,0.5){$\uparrow$}
\end{picture}\hspace{.5cm}
}
\newcommand{\smallFSOSP}{\begin{picture}(,)
\put(5.5,2.7){$\smallN$}
\put(5.5,-2.3){$\smallS$}
\put(.9,3.1){$\smallnw$}
\put(.9,-2){$\smallsw$}
\put(0.5,.5){$\ssmallW$}
\end{picture}\hspace{.35cm}
}
\newcommand{\FSOSP}{\begin{picture}(,)
\put(6.8,5){$\smallN$}
\put(6.8,-1){$\smallS$}
\put(2.5,5.1){$\smallnw$}
\put(2.5,0){$\smallsw$}
\put(1,2){$\smallW$}
\end{picture}\hspace{.4cm}
}
\newcommand{\blank}[1]{}
\newcommand{\ffG}{\mc{G}^{**}}
\title  {
      Degenerate random environments.
      }
\author{
Mark Holmes\footnote{Department of Statistics, University of Auckland.  E-mail {\tt
holmes@stat.auckland.ac.nz}} \and Thomas S. Salisbury \footnote{Department of Mathematics and Statistics, York University. E-mail {\tt
salt@yorku.ca}; Salisbury's research is supported in part by NSERC. Part of this work was carried out during a visit to the University of Auckland, whose hospitality he gratefully acknowledges. Both authors thank the referees for very thorough and helpful comments.}}
\begin{document}

\maketitle

\begin{abstract}
We consider connectivity properties of certain i.i.d.~random environments on $\Z^d$, where at each location some steps may not be available.  Site percolation and oriented percolation are examples of such environments.  In these models, one of the quantities most often studied is the (random) set of vertices that can be reached from the origin by following a connected path.  More generally, for the models we consider, multiple different types of connectivity are of interest, including: the set of vertices that can be reached from the origin; the set of vertices from which the origin can be reached; the intersection of the two.  As with percolation models, many of the models we consider admit, or are expected to admit phase transitions.  Among the main results of the paper is a proof of the existence of phase transitions for some two-dimensional models that are non-monotone in their underlying parameter, and an improved bound on the critical value for oriented site percolation on the triangular lattice.  
 The connectivity of the random directed graphs provides a foundation for understanding the asymptotic properties of random walks in these random environments, which we study in a second paper.
\end{abstract}

\section{Introduction}
\blank{
We start with an elementary example, to illustrate the kind of questions we will be asking. 

Perform site percolation with parameter $p$ on the lattice $\Z^2$.  At each occupied vertex $x=(x^{[1]},x^{[2]})$, insert two directed edges emanating from $x$, one pointing up $\uparrow$ and one pointing right $\rightarrow$.  If $x$ is not occupied, insert directed edges pointing up $\uparrow$ and left $\leftarrow$ (see Figure \ref{fig:introcase}).  In the resulting random directed graph, made up of configurations $\NE$ and $\NW$, there is of course an arrow pointing up from every vertex, so in particular from any vertex $x$ the set of vertices $\mc{C}_x$ that can be reached from $x$ is infinite. Likewise for any $y$, the set of vertices $\mc{B}_y=\{x:y \in \mc{C}_x\}$  from which $y$ can be reached is also infinite.  However, for each $x$, $\mc{M}_x:=\mc{C}_x\cap\mc{B}_x$ is finite.

\begin{figure}
\centering
\includegraphics[scale=.45]{intro_env_1.eps}
\includegraphics[height=8cm,width=8cm]{intro_speed_p.eps}
\caption{A finite region of a degenerate environment in two dimensions such that $\mu(\{\uparrow,\rightarrow\})=p=.75$, $\mu(\{\leftarrow,\uparrow\})=1-p=.25$, and the first coordinate of the velocity as a function of $p\ge \hlf$.}
\label{fig:introcase}
\end{figure}

The random walk $X_n$ (choosing uniformly among available steps) in such a random environment 
trivially has a limiting velocity in the vertical direction given by $v^{[2]}=1/2$.  In addition, each upward step constitutes a renewal for the walk since the environment seen thereafter has no intersection with the past.  For $n\ge 0$, let $\tau_n=\inf\{m\ge 0:X_m^{[2]}=n\}$.  Then for $i\ge 1$, $T_i=\tau_i-\tau_{i-1}$ are Geometric$(1/2)$ random variables (with mean $2$), and $Y_i=X^{[1]}_{\tau_i -1}-X^{[1]}_{\tau_{i-1}}$ are such that $\{(T_i,Y_i)\}_{i\ge 1}$ are i.i.d. 
Renewal theory then tells us that 
\[\frac{X_n^{[1]}}{n}\ra v^{[1]}=\frac{E[Y_1]}{E[T_1]}, \quad \text{ almost surely as }n\ra \infty.
\] 
The expectations can be calculated explicitly to get (see \cite{HS_RWdRE} and Figure 
\ref{fig:introcase}) 
\[v^{[1]}=\frac{(2p-1)(p^2-p+6)}{6(2-p)(1+p)}.\]
Compare this with the speed $\tilde v^{[1]}=p-\hlf$ of a true random walk that goes up with probability $1/2$, right with probability $p/2$, and left with probability $(1-p)/2$, we see that the speeds agree for $p=0, 1/2$, and 1, but the RWRE is slower in between.

The above random environment is degenerate in the sense that some edges are missing.  The ``uniform ellipticity'' condition that is normally assumed in studying RWRE fails in this case. The objective of this paper is to study the connectivity structure of such random environments, but in a more general setting, where the above simple reasoning may not apply.  This structure has implications for asymptotic properties of random walks in non-elliptic 
 random environments, which we study in a second paper \cite{HS_RWdRE}.

This paper is organised as follows.  In Section \ref{sec:general} we introduce the random environments that are the main objects of study of this paper, as well as various notions of connectivity in the random graphs.  We describe the main results of the paper in Section \ref{sec:results}.  In Sections \ref{sec:C_x}, \ref{sec:B_x} and \ref{sec:M_x} we examine three of these notions of connectivity in such random directed graphs, with the main results being proved in Sections \ref{sec:B_x} and \ref{sec:M_x}.

\subsection{The model}}
\label{sec:general}
When studying random walks in environments that are non-elliptic (some nearest-neighbour steps may not be allowed from some locations), one should first consider the connectivity structure of the directed graphs that are induced by such environments.  In this paper, we introduce such random graphs in a general setting, with particular emphasis on models that connect to infinitely many sites almost surely.   We show that some such non-percolation models exhibit phase transitions,
 and use these results to improve existing bounds on the critical points for certain site-percolation models on the triangular lattice in 2 dimensions.  Many of the results of this paper are used in subsequent work where we study random walks in non-elliptic random environments \cite{HS_RWdRE}.

For fixed $d\ge 2$ let $\mc{E}_+=\{e_i: i=1,\dots,d\}$ be the set of standard basis vectors in $\Z^d$, and let $\mc{E}_-=\{-e_i: i=1,\dots,d\}$ and $\mc{E}=\mc{E}_+\cup \mc{E}_-$.  Let $\mc{P}$
denote the power set of $\mc{E}$.  For any set $A$, let $|A|$ denote the cardinality of $A$.  Let $\mu$ be a probability measure on $\mc{P}$.  For $A\in \mc{P}$ we will abuse notation and write $\mu(A)$ for $\mu(\{A\})$.  An i.i.d.~{\em degenerate random environment} is an element $\mc{G}=\{\mc{G}_x\}_{x\in \Z^d}$ of $\mc{P}^{\Z^d}$, equipped with the product $\sigma$-algebra and the product measure $\nu=\mu^{\otimes \Z^d}$.  We denote the expectation of a random variable $Z$ with respect to $\nu$ by $\mE[Z]$. 

We say that the environment is {\em $2$-valued} when $\mu$ charges exactly two points, i.e.~there exist distinct $A_1,A_2\in  \mc{P}$ and $p \in (0,1)$ such that $\mu(A_1)=p$ and $\mu(A_2)=1-p$.   In two dimensions, for fixed $A_1,A_2$  we will sometimes depict the corresponding family (indexed by $p=\mu(A_1)$) of models $(A_1\, A_2)$ pictorially.  Some well-known models fall within this framework.  
For example, setting $\mu(\mc{E})=p$ and $\mu(\emptyset)=1-p$, the random environment induced by $\mu$ is site percolation, and when $d=2$ we can depict it by $(\NSEWalt\, \cdot)$.  If instead we set $\mu(\mc{E}_+)=p$ and $\mu(\emptyset)=1-p$, we obtain oriented site percolation [in 2-dimensions $(\NE \,\cdot)$].  The model $(\uparrow\, \rightarrow)$ corresponds to a web of coalescing random walks, as in Arratia \cite{Arrat} or Toth-Werner \cite{TW}.  

\begin{figure}
\begin{center}
\includegraphics[scale=.45]{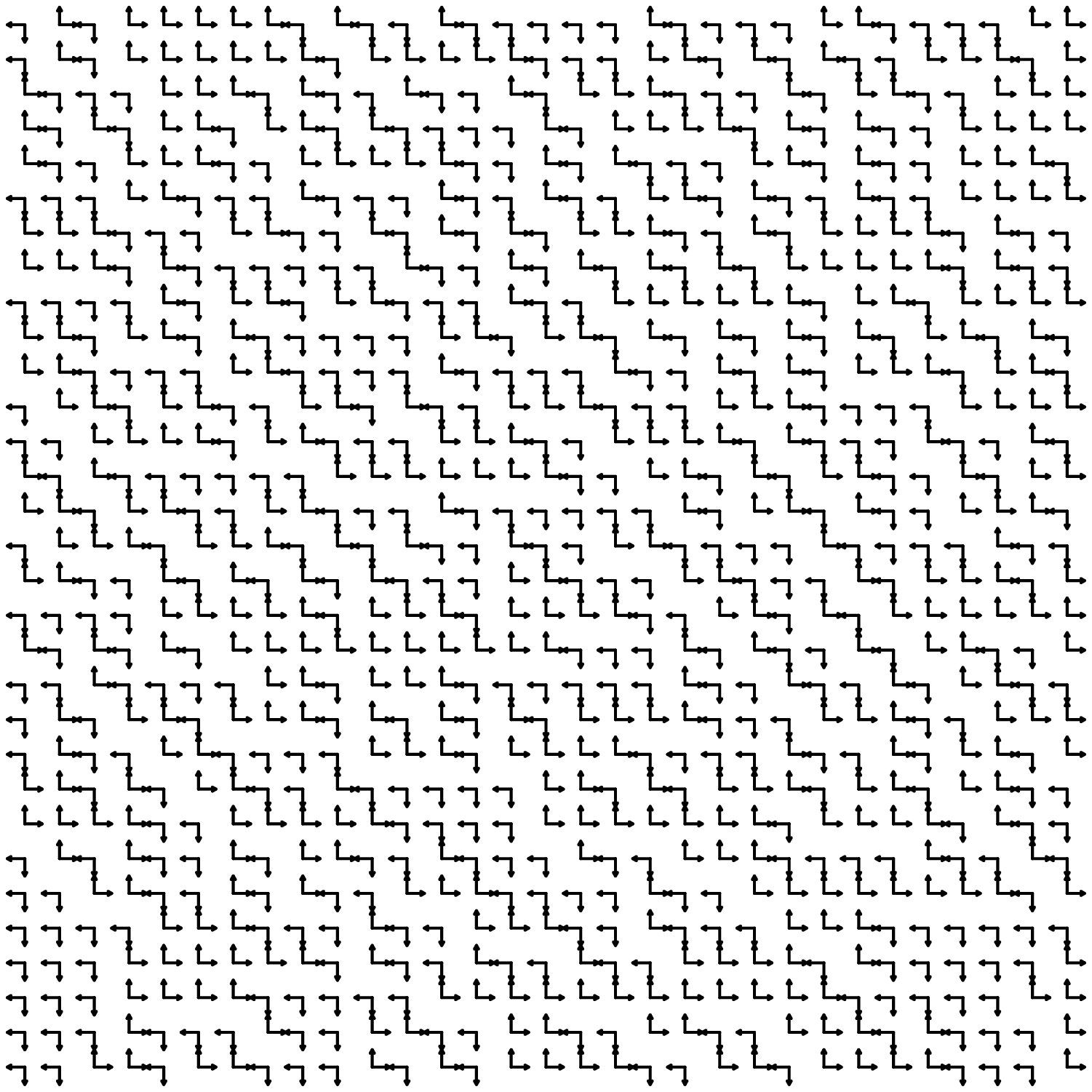} 
\includegraphics[scale=.45]{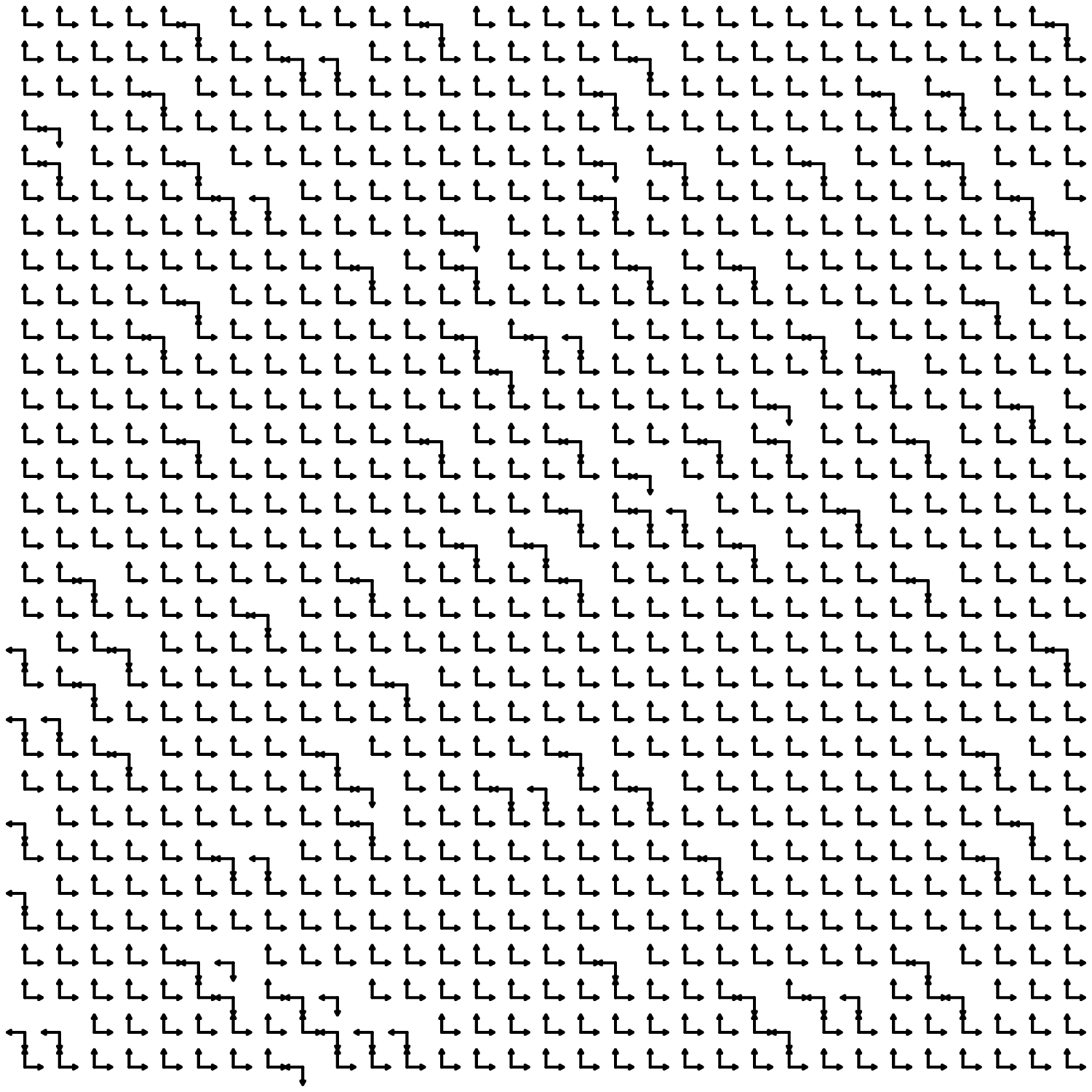} 
\end{center}
\vspace{-1cm}
\caption{Finite regions of the random environment in Example~\ref{exa:NE_SW} for $p=.5$ and $p=.9$ respectively.}
\label{fig:orthant_env}
\end{figure}

Three interesting 2-valued 2-dimensional examples $(A_1\,A_2)$ are the following.  
\begin{EXA} $(\NE\SW)$:
\label{exa:NE_SW}
i.e.~$A_1=\mc{E}_+$ and $A_2=\mc{E}_-$ (with $d=2$).  See Figure \ref{fig:orthant_env}.
 
\end{EXA}
\begin{EXA} $(\SWE\,\uparrow)$: 
\label{exa:SWE_N}
i.e.~$A_1=\{\rightarrow, \downarrow, \leftarrow\}$ and $A_2=\{\uparrow\}$.
\end{EXA}
\begin{EXA} $(\WE\,\NS)$:
\label{exa:NS_WE}
i.e.~$A_1=\{\rightarrow,\leftarrow\}$ and $A_2=\{\uparrow,\downarrow\}$.
\end{EXA}

Example \ref{exa:NE_SW} has superficial resemblances to corner percolation (see Pete \cite{Pete}), and to the Lorentz lattice gas model (see \S 13.3 of Grimmett \cite{Grim99}), though those models in fact seem unrelated.  Example \ref{exa:NS_WE} is a degenerate version of the ``good-node bad-node'' model of Lawler \cite{Lawler}.

While the examples that we find most interesting are 2-valued, there are of course other interesting models that lie within our framework.  One such example is oriented bond percolation \cite[Section 12.8]{Grim99}, where $p \in (0,1)$ and we define  
$\mu(B)=p^{|B|}(1-p)^{d-|B|}$ if $B\subset \mc{E}_+$ and $\mu(B)=0$ otherwise.

In $\Z^2$, re-label the four unit vectors in $\mc{E}$ as $f_1,\dots,f_4$. For $0<\lambda_1,\dots,\lambda_4<1$  take $\mu(B)=\prod_{i=1}^4\lambda_i^{1_B(f_i)}(1-\lambda_i)^{1_{B^c}(f_i)}$. Then each bond is randomly oriented in one or both directions (or is vacant), giving Grimmett's ``independent randomly oriented lattice'' model.  See Grimmett \cite{Grim02},  Wu and Zuo \cite{WuZuo}, and Linusson \cite{Lin}. This includes ``diode-resistor percolation'' as a special case. See Dhar et al \cite{DBP}, Redner \cite{Redner}, and Wierman \cite{Wierman85}.

The main results in this paper concern the structure of connected clusters $\mc{C}_x$, $\mc{B}_x$ and $\mc{M}_x$ in degenerate random environments, defined as follows.
\begin{DEF}
\label{def:connections}
Given an environment $\mc{G}$ and an $x=(x^{[1]},\dots,x^{[d]})\in \Z^d$, we say that:
\begin{itemize}
\item $x$ is {\em connected} to $y\in \Z^d$ and write $x\ra y$ if there exists an $n\ge 0$ and a sequence $x=x_0\,x_1,\dots, x_n=y$ such that $x_{i+1}-x_{i}\in \mc{G}_{x_i}$ for $i=0,\dots,n-1$;  
\item $x$ and $y$ {\em communicate} and write $x\lra y$, if $x\ra y$ and $y \ra x$;  
\item a nearest neighbour path in $\Z^d$ is {\em open in $\mc{G}$} if that path consists of edges in $\mc{G}$.
\end{itemize}
Let $\mc{C}_x=\{y\in \Z^d:x \ra y\}$, $\mc{B}_y=\{x\in\Z^d:x\ra y\}$, and $\mc{M}_x=\{y \in \Z^d:x\lra y\}=\mc{B}_x\cap\mc{C}_x$.
\end{DEF}
Three important quantities for this paper are the following probabilities
\[\theta_+=\nu(|\mc{C}_o|=\infty), \quad \theta_-=\nu(|\mc{B}_o|=\infty), \quad \text{and}\quad  \theta=\nu(|\mc{M}_o|=\infty).\]

For the model of Example \ref{exa:NE_SW}, we'll show that $\theta(p)=0$ for $p\notin (.16730,.83270)$, while for $p\in (.4534,.5466)$ there exists a unique infinite $\mc{M}$-cluster (and $\theta(p)>0$).  
There are two phase transitions in the model as $p$ varies -- with $\theta(p)$ moving from 0 to positive and back to 0.
The model is not monotone, in the sense that changing the local environment can both open and close connections. Nevertheless we'll relate the critical $p$'s to critical values for monotone percolation models. See Figures \ref{fig:orthant_env}, \ref{fig:ospC_o} and  \ref{fig:orthant_B_o}, Theorem \ref{thm:infiniteBorthant}, Corollary \ref{cor:Mfinite_2d_orthant}, and Theorem \ref{prp:infiniteM3}.

Similarly, the model of Example \ref{exa:SWE_N} is not monotone.  There is a unique infinite $\mc{M}$-cluster for $p>.4311$, while $\theta(p)=0$ for $p<.16730$. There is a phase transition, related to that of a monotone percolation model. 
See Figure \ref{fig:WSE_N_B_o}, Theorem \ref{thm:infiniteB5}, Corollary \ref{cor:Mfinite_2d_orthant}, and Theorem \ref{prp:infiniteM2}.


Example \ref{exa:NS_WE} is also non-monotone, but $\theta(p)>0$ for all $p\in (0,1)$ and there is almost surely a unique infinite $\mc{M}$-cluster.  See Theorem \ref{prp:infiniteM1}. Berger and Deuschel~\cite{BD11} prove a central limit theorem for this model.




\subsection{Main results}
\label{sec:results}
Since we study a whole class of models in this paper, there are both general and model-specific results. Many are short and elementary, while some are substantial.  

We use a broad range of classical methods that have been successful in studying percolation models, including blocking configurations, duality results and self-avoiding path counting arguments.  We cannot use the monotonicity property that is often used in percolation proofs either explicitly or implicitly e.g.~in establishing a sharp phase transition, or in proving the uniqueness of the infinite cluster, however 
we frequently exploit a tool that is not present in standard percolation models, namely the existence of  subnetworks of coalescing random walks (e.g.~open paths that use only steps in $\mc{E}_+$).


In addition to 
introducing a new and interesting class of random directed graph models, 
the following (2-dimensional) results are the highlights of this paper:
\begin{enumerate}
\item Proving a structure theorem (see Proposition \ref{PRP:trichotomy} and Corollary \ref{COR:trichotomy}), giving the possible forms of $\mc{B}_x$ in two dimensions, under fairly broad conditions.
\item Proving the existence of sharp phase transitions, both for infinite $\mc{B}_x$ clusters (e.g.~see Theorems  \ref{thm:infiniteBorthant} and \ref{thm:infiniteB5}) and for the existence of a gigantic $\mc{M}$ cluster (see Definition \ref{def:gigantic} and 
Theorem \ref{BvsM}).  
\item Improving existing rigorous bounds on the critical values of oriented site-percolation models on the triangular lattice. These follow from bounds on the critical values for our random directed graph models, together with a duality argument (see Theorems \ref{prp:infiniteM2} and \ref{prp:infiniteM3}).
\end{enumerate}

\section{The set of points $\mc{C}_x$ that can be reached from $x$}
\label{sec:C_x}
In this section we investigate properties of the random sets $\mc{C}_x\subset \Z^d$.  
In addition to standard percolation models, there are plenty of other models where $0<\theta_+<1$.  Consider for example a 4-valued model $(\NE\,\SE\,\SW\,\NW)$, where $\mu(\{\uparrow, \rightarrow\})>0$, $\mu(\{\downarrow, \rightarrow\})>0$, 
$\mu(\{\leftarrow, \downarrow\})>0$, $\mu(\{\uparrow, \leftarrow\})>0$. Then $\nu(|\mc{C}_o|=4)>0$ (see also Lemma \ref{lem:criteriontobeinfinite} below), but if any one of these local configurations occurs with probability greater than the critical value $p_c^{\smallNE}$ of oriented site percolation then clearly also $\theta_+>0$.




As for standard percolation models, $\{\exists x:|\mc{C}_x|=\infty\}$ is a tail event, giving the following result.
\begin{LEM}
\label{lem:exist_inf_C}
If $\theta_+>0$ then $\nu(\exists x:|\mc{C}_x|=\infty)=1$.
\end{LEM}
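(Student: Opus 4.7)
The plan is to verify that $A=\{\exists x:|\mc{C}_x|=\infty\}$ is a tail event of the i.i.d.\ family $(\mc{G}_x)_{x\in\Z^d}$ and then apply Kolmogorov's 0-1 law, concluding with the trivial lower bound $\nu(A)\ge \theta_+$.

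To establish the tail property, I would fix an arbitrary finite set $F\subset\Z^d$ and show that $A$ is unchanged if $\mc{G}$ is modified on $F$. Starting from $\mc{G}\in A$, pick $x^*$ with $|\mc{C}_{x^*}|=\infty$. Because each out-degree $|\mc{G}_y|$ is bounded by $2d$, the directed graph reachable from $x^*$ is locally finite; its BFS tree rooted at $x^*$ is therefore an infinite tree of bounded branching, so K\"onig's lemma yields an infinite self-avoiding directed path $x^*=x_0,x_1,x_2,\ldots$ in $\mc{G}$. Since $F$ is finite and the $x_i$ are distinct, there exists $n$ with $x_i\notin F$ for all $i\ge n$. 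The tail path $(x_n,x_{n+1},\ldots)$ uses only the edge-sets $\mc{G}_{x_i}$ with $x_i\notin F$, which are untouched by any modification on $F$. Hence $|\mc{C}_{x_n}|=\infty$ in the modified environment as well, so membership in $A$ depends only on $(\mc{G}_x)_{x\notin F}$. As $F$ was arbitrary, $A$ lies in the tail $\sigma$-algebra.

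Kolmogorov's 0-1 law, applied to the product measure $\nu=\mu^{\otimes\Z^d}$, then forces $\nu(A)\in\{0,1\}$. The hypothesis gives $\nu(A)\ge\nu(|\mc{C}_o|=\infty)=\theta_+>0$, hence $\nu(A)=1$.

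The only mildly delicate step is the extraction of an infinite self-avoiding directed path from the assumption $|\mc{C}_{x^*}|=\infty$: a priori, one might worry that every infinite path from $x^*$ is forced to re-enter the finite set $F$, but local finiteness of the out-neighbourhoods, combined with K\"onig's lemma applied to the BFS tree, rules this out and hands us a simple path whose vertex set is eventually disjoint from $F$.
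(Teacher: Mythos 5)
Your proof is correct and follows the same approach the paper indicates (which it states but does not write out): identify $\{\exists x:|\mc{C}_x|=\infty\}$ as a tail event and apply Kolmogorov's 0-1 law together with $\nu(A)\ge\theta_+>0$. Your use of bounded out-degree and K\"onig's lemma to extract an infinite self-avoiding directed path, whose tail avoids the finite set $F$, is exactly the right justification for the tail-event property.
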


When studying degenerate random environments, and random walks therein, our principal interest will be in situations where the following condition (which prevents the random walk from getting stuck on a finite set of sites, see \cite{HS_RWdRE}) holds: 
\begin{equation}
\label{standinghypothesis}
\theta_+=1.
\end{equation}
The following is an explicit condition on $\mu$ that is equivalent to \eqref{standinghypothesis}.

\begin{LEM} Fix $d\ge 1$.  Then $\theta_+=1$ if and only if there exists a set $V$ of mutually orthogonal unit vectors such that $\mu(\{A:A\cap V\neq\emptyset\})=1$.
\label{lem:criteriontobeinfinite}
\end{LEM}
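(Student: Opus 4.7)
The plan is to prove the two directions separately. For ($\Leftarrow$), suppose $V$ is a set of mutually orthogonal unit vectors in $\mc{E}$ with $\mu(\{A: A\cap V \neq \emptyset\}) = 1$. By countable additivity, $\nu$-almost surely every site $x \in \Z^d$ satisfies $\mc{G}_x \cap V \neq \emptyset$. On this full-measure event I construct an infinite deterministic path out of $o$ by inductively choosing, at each step, any available element of $\mc{G}_{x_i} \cap V$. Mutual orthogonality forces $V$ to contain at most one of $\pm e_i$ for each coordinate $i$, so we may write $V = \{\varepsilon_i e_i : i \in I\}$ for some $I \subseteq \{1,\ldots,d\}$ and signs $\varepsilon_i \in \{\pm 1\}$. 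The linear functional $f(x) := \sum_{i\in I} \varepsilon_i x^{[i]}$ then strictly increases along any such path, so the $x_n$'s are distinct and $|\mc{C}_o|=\infty$, giving $\theta_+=1$.

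For ($\Rightarrow$), I would argue the contrapositive: assume that for every set $V$ of mutually orthogonal unit vectors in $\mc{E}$ we have $\mu(\{A: A \cap V = \emptyset\}) > 0$, and produce a trapping event of positive probability. Take the box $S = \{-1,0,1\}^d$, and for each $x \in S$ let $F_x = \{e \in \mc{E}: x + e \notin S\}$ be the set of directions pointing out of $S$ from $x$. The key geometric observation is that each $F_x$ consists of pairwise orthogonal unit vectors: if $e_i \in F_x$ then $x^{[i]}=1$, which precludes $-e_i \in F_x$ (that would force $x^{[i]}=-1$). Hence by hypothesis $p_x := \mu(\{A : A\cap F_x = \emptyset\})>0$ for every $x \in S$. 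By independence of the environment,
\[
\nu\bigl( \mc{G}_x \cap F_x = \emptyset \text{ for all } x \in S\bigr) \;=\; \prod_{x \in S} p_x \;>\; 0,
\]
as this is a finite product of positive numbers. On this event, no edge from any site of $S$ leaves $S$, so $\mc{C}_o \subseteq S$ is finite, whence $\theta_+ < 1$.

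The only real insight is the choice of trap: cubes are special because their outward unit normals at every vertex are automatically pairwise orthogonal, matching exactly the kind of set $V$ the hypothesis lets us control. A less symmetric trapping region could easily produce an $F_x$ containing both $e_i$ and $-e_i$, for which the hypothesis gives no information, so this geometric matching between $S$ and the statement's notion of ``mutually orthogonal'' is the crux of the argument.
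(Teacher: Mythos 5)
Your proof is correct and uses essentially the same strategy as the paper: trap the origin inside a cube by arranging, with positive probability, that the environment at each vertex has no arrows pointing out of the cube, using the key observation that the outward directions at a cube vertex are mutually orthogonal. The only cosmetic difference is the choice of box: the paper uses the minimal cube $F=\{0,1\}^d$ (with $2^d$ vertices, where the outward directions at each vertex form a \emph{full} orthogonal set $V_E$), whereas you use the larger $S=\{-1,0,1\}^d$ (with $3^d$ vertices, where several $F_x$ are strict subsets of a full orthogonal set and $F_o=\emptyset$); both work, and the smaller cube is marginally more economical.
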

\proof If such a set $V$ exists then trivially we can construct an infinite self-avoiding path by always following a vector chosen from $V$.

For any $E\subset\{1,2,\dots,d\}$, let $V_{E}=\{+e_i:i\in E\}\cup \{-e_j: j\in \{1,2,\dots, d\}\setminus E\}$ and let $B_{E}=-V_{E}=\mc{E}\setminus V_{E}$.  Note that for each $E$, $V_{E}$ is an orthogonal set of vectors. 
If no such $V$ in the statement of the lemma exists, then $\mu(\{A:A\subset B_{E}\})=\mu(\{A:A\cap V_{E}=\emptyset\})>0$ for each $E$.   Let $F=\{0,1\}^d\subset \Z^d$.  For $x\in F$, let $E(x)=\{i \in \{1,\dots,d\}: x^{[i]}=1\}$.  Then with positive $\nu$ probability, $\mc{G}_x\subset B_{E(x)}$ for every $x\in F$.  It is easy to check that on this event we have that $\mc{C}_o \subset F$.
\qed\medskip

\medskip
Thus according to Lemma \ref{lem:criteriontobeinfinite}, models satisfying $\theta_+=1$ contain subnetworks (determined by $V$) of random walks.  These walks will typically coalesce as in \cite{Arrat} and \cite{TW}.  Of most relevance to us is the 2-dimensional setting. 
\begin{LEM}
\label{lem:coalescence1}
In the model $(\uparrow\rightarrow)$, $\mc{C}_x\cap\mc{C}_y\neq\emptyset$ $\nu$-almost surely for every $x,y$. 
\end{LEM}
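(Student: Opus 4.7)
The plan is to exploit the rigid structure of the $(\uparrow\rightarrow)$ model: at every site $z$ the environment $\mc{G}_z$ is a singleton (either $\{e_1\}$ or $\{e_2\}$), so $\mc{C}_z$ is the single monotone path $(Z_n)_{n\ge 0}$ starting at $z$ with $Z_{n+1}-Z_n$ the unique element of $\mc{G}_{Z_n}$. As soon as two such paths share a vertex they coincide forever after, so it suffices to show that for any $x,y$ the paths $\pi_x=(X_n)$ and $\pi_y=(Y_n)$ almost surely meet.

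First I would reduce to the case $x^{[1]}+x^{[2]}=y^{[1]}+y^{[2]}$. If instead $x^{[1]}+x^{[2]}<y^{[1]}+y^{[2]}$, I replace $x$ by $X_k\in\mc{C}_x$ with $k=(y^{[1]}+y^{[2]})-(x^{[1]}+x^{[2]})$; any later coincidence of $\mc{C}_{X_k}$ with $\mc{C}_y$ still lies in $\mc{C}_x\cap\mc{C}_y$. After this reduction $X_n$ and $Y_n$ sit on a common anti-diagonal for every $n\ge 0$, so they meet if and only if the first-coordinate difference $D_n:=X_n^{[1]}-Y_n^{[1]}$ ever equals $0$.

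Next I would analyze $D_n$. Write $p=\mu(\{\uparrow\})\in(0,1)$. As long as $X_n\neq Y_n$ the two walkers sit at distinct sites, so by the product structure of $\nu$ their next steps are driven by independent environments. A direct case-check then yields that $\Delta D_n:=D_{n+1}-D_n$ takes values $\pm 1$ each with probability $p(1-p)$ and value $0$ with probability $p^2+(1-p)^2$, with successive increments independent up to the meeting time; in particular $\Delta D_n$ is symmetric with mean $0$ and positive variance $2p(1-p)$.

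The main technical point is to argue recurrence cleanly despite the fact that the i.i.d.\ structure only persists until the meeting time $\tau=\inf\{n:D_n=0\}$. I would handle this by coupling $(X,Y)$ with an extended pair in which, after any meeting, each walker draws its next step from a fresh independent copy of the environment; under this coupling $(D_n)_{n\ge 0}$ is a genuine mean-zero random walk on $\Z$ with bounded, non-degenerate increments, and hence recurrent, so it hits $0$ in finite time a.s. Since the original and extended processes agree on $[0,\tau]$, this forces $\tau<\infty$ almost surely in the original model, which is exactly the statement $\mc{C}_x\cap\mc{C}_y\neq\emptyset$.
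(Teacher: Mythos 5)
Your proof is correct and follows essentially the same route as the paper: reduce to a common anti-diagonal, observe that the first-coordinate difference $D_n$ is a symmetric nearest-neighbour random walk (with holds) until the paths meet, and conclude by recurrence. Your extra coupling step to extend $(D_n)$ past the meeting time makes explicit a point the paper handles more tersely by saying the walk is "absorbed at 0"; otherwise the two arguments coincide.
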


\proof Assume first that $x$ and $y$ both belong to the line $\{(i,j):i+j=0\}$. Follow the unique path from $x$ (resp. $y$), and after $n$ steps let $X_n$ (resp. $Y_n$) be the first coordinate of the point reached. Then $X_n$ and $Y_n$ follow independent random walks (up to the time they coalesce), with probability $p$ of standing in place, and probability $1-p$ of moving a step to the right. So $X_n-Y_n$ is a random walk, absorbed at 0, which moves $+1$ or $-1$ with probability $p(1-p)$ each, and otherwise stands in place. Since this nearest neighbour RW is symmetric, it hits 0 with probability 1, which is the desired conclusion. 

If $x^{[1]}+x^{[2]}\neq y^{[1]}+y^{[2]}$, just follow the path from one point till it reaches the diagonal line the other starts on, and then apply the same argument.\qed\medskip

An easy consequence of Lemmas \ref{lem:criteriontobeinfinite} and \ref{lem:coalescence1} is the following result, whose proof is omitted.

\begin{COR}
\label{cor:coalescence2}
Suppose that $d=2$, $\theta_+=1$ and $\mu$ is at least 2-valued.
Then $\mc{C}_x\cap\mc{C}_y\neq\emptyset$, $\nu$-almost surely, for every $x$ and $y$, except for the model $(\leftrightarrow\rightarrow)$ (and its rotations).
\end{COR}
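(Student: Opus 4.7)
The plan is to use Lemma \ref{lem:criteriontobeinfinite} to locate an orthogonal set $V$ of unit vectors intersecting each $\mc{G}_x$ almost surely, and then to split into two cases according to whether some single unit vector is a.s.\ available. In each case we will use a rotation/reflection of $\Z^2$ to normalize the preferred directions.

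Case 1: no unit vector is a.s.\ available. Then Lemma \ref{lem:criteriontobeinfinite} forces $|V|=2$, and after rotation we may take $V=\{\uparrow,\rightarrow\}$. Define the tie-breaking rule $T(A)=\uparrow$ if $\uparrow\in A$ and $T(A)=\rightarrow$ otherwise, so that $T(\mc{G}_x)\in\mc{G}_x$ a.s. The induced i.i.d.\ environment $\{T(\mc{G}_x)\}$ is exactly the two-valued $(\uparrow\rightarrow)$ model with parameter $p=\mu(\{A:\uparrow\in A\})$, and the Case 1 hypothesis rules out $p\in\{0,1\}$. Lemma \ref{lem:coalescence1} then produces clusters $\mc{C}^T_x, \mc{C}^T_y$ in this sub-environment that intersect a.s., and since every $T$-path is also an open path in $\mc{G}$, these are subsets of $\mc{C}_x, \mc{C}_y$, giving the claim.

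Case 2: some unit vector is a.s.\ available, which we rotate to $\uparrow$. If $\mu$ is supported on subsets of $\{\uparrow,\downarrow\}$ then the model is $(\updownarrow\uparrow)$, a rotation of the excluded $(\leftrightarrow\rightarrow)$, so this case does not arise. Otherwise, reflecting horizontally if necessary, we may assume $q:=\mu(\{A:\rightarrow\in A\})>0$. For $z=(a,b)\in\Z^2$ inductively define $Y_0^z=b$ and $Y_{i+1}^z=\min\{h\geq Y_i^z:\rightarrow\in\mc{G}_{(a+i,h)}\}$; each $Y_i^z$ is a.s.\ finite because an i.i.d.\ Bernoulli$(q)$ sequence has a first success. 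The ``go up, step right when possible'' strategy then inductively gives $(a+i,h)\in\mc{C}_z$ for all $i\geq 0$ and $h\geq Y_i^z$. For any fixed $x,y$ and $N\geq\max(x^{[1]},y^{[1]})$, a common half-line $\{N\}\times[H,\infty)\subset\mc{C}_x\cap\mc{C}_y$ emerges for all sufficiently large $H$, and then a countable union over $x,y$ upgrades to the simultaneous statement.

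The main obstacle is the case decomposition and the verification that the WLOG rotations/reflections exhaust the non-exceptional models: the substantive coalescence content is absorbed into Lemma \ref{lem:coalescence1} in Case 1, while in Case 2 the deterministic upward direction reduces matters to a standard Bernoulli first-success argument.
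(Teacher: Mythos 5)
Your proof is correct, and it follows the route the paper indicates (the paper omits the proof, saying only that the result is ``an easy consequence of Lemmas \ref{lem:criteriontobeinfinite} and \ref{lem:coalescence1}''): use Lemma \ref{lem:criteriontobeinfinite} to extract the orthogonal set $V$, and in the genuinely random situation ($|V|=2$ with neither coordinate a.s.\ available) reduce to the $(\uparrow\rightarrow)$ model by a measurable tie-breaking selection and invoke Lemma \ref{lem:coalescence1}. Your Case 2 (a coordinate direction a.s.\ available) is handled slightly differently than one might expect from the paper's one-line attribution: rather than tie-breaking toward the horizontal direction to again land in $(\uparrow\rightarrow)$ when $q\in(0,1)$ and treating $q=1$ trivially, you give a single direct ``climb, step right at the first opportunity'' argument exhibiting a common half-line $\{N\}\times[H,\infty)\subset\mc{C}_x\cap\mc{C}_y$, which unifies those two sub-cases and avoids Lemma \ref{lem:coalescence1} altogether there; this is a harmless and arguably cleaner variant. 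The case analysis is exhaustive, $p\in(0,1)$ is correctly forced in Case 1, and the models excluded in Case 2 are exactly the rotations of $(\leftrightarrow\rightarrow)$, matching the statement.
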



\subsection{Percolation}
\label{sec:percolation}
\begin{figure}
\vspace{-.1cm}
\includegraphics[scale=.5]{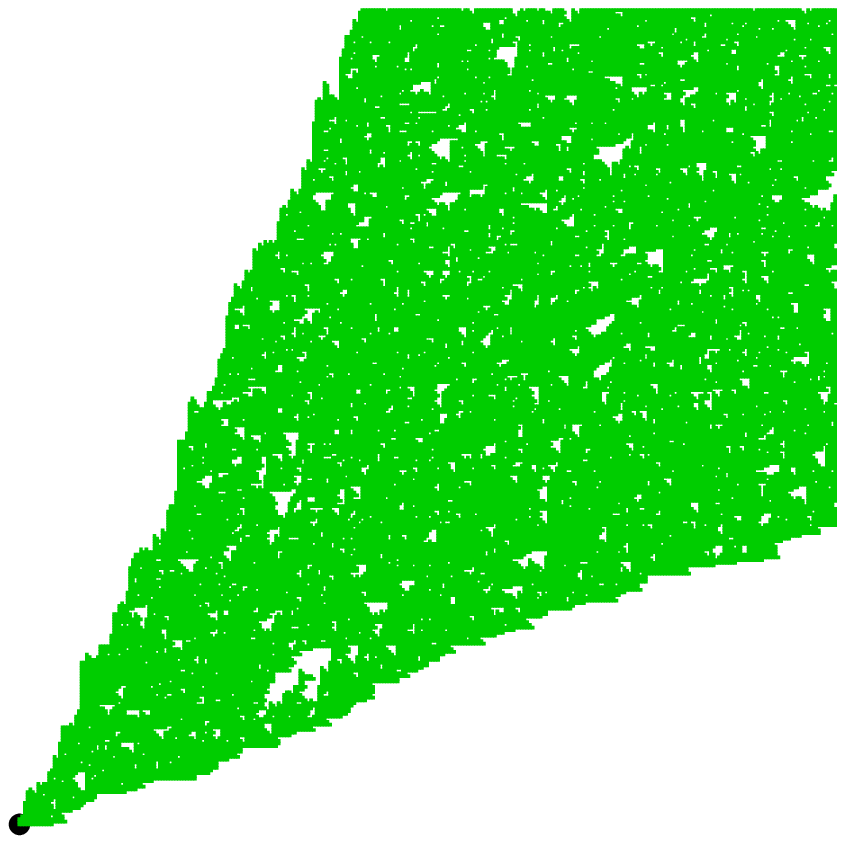}
\includegraphics[scale=.5]{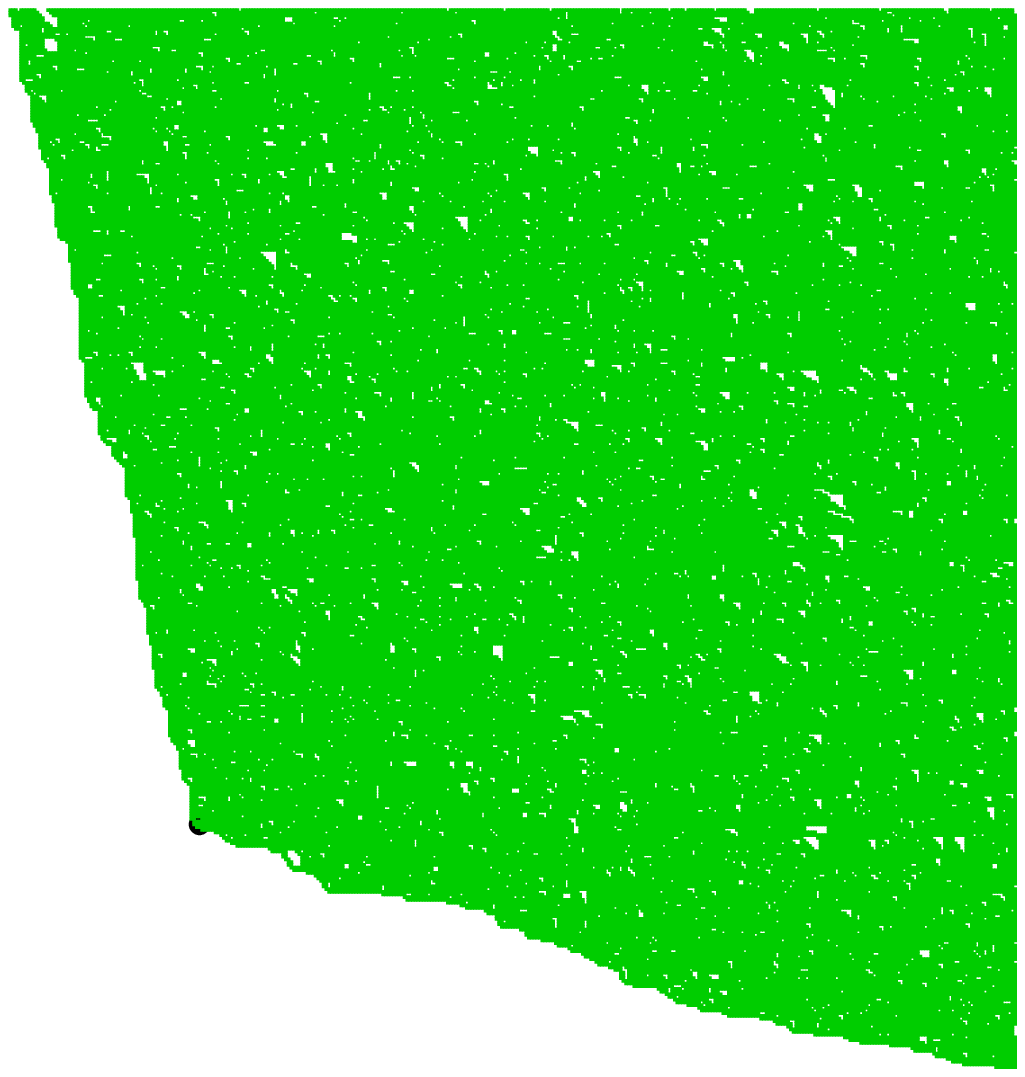}
\vspace{-3cm}
\caption{Part of a realisation of the set $\mc{C}_o$ for (a) the model with $\mu(\{\uparrow,\rightarrow\})=.8=1-\mu(\emptyset)$, and (b) the model with $\mu(\{\uparrow,\rightarrow\})=.8=1-\mu(\{\downarrow,\leftarrow\})$.  The former is a subset of the northeast quadrant, while (loosely speaking) the latter contains the northeast quadrant.}
\label{fig:ospC_o}
\end{figure}

In two dimensions the non-trivial site-percolation models (2-valued models with $A_2=\emptyset$) that fit into our framework are $(\NE\cdot)$, $(\SWE\cdot)$, and $(\NSEWalt\cdot)$.  Recall that the first is 
oriented site percolation (see Durrett \cite{Durrett}), while the third is  
site percolation (see Grimmett \cite{Grim99}).  The intermediate model $(\SWE\cdot)$, where $\mu(\{\leftarrow, \downarrow, \rightarrow\})=p$ and $\mu(\emptyset)=1-p$, is partially-oriented site perc.~(see Hughes \cite{Hughes} or M\'artin and Vannimenus \cite{MarV}).
All three models are {\sl monotone} in the sense that they can be coupled so that the set of arrows at each site is a non-decreasing function of $p$. Thus it is immediate that there is a critical $p_c$ in each case, such that $\theta_+>0$ if $p>p_c$, and $\theta_+=0$ a.s.~if $p<p_c$. We denote these critical probabilities by 
$p_c^{\smallNE}$, $p_c^{\smallSWE}$, and $p_c^{\smallNSEWalt}$. Clearly 
$p_c^{\smallNE}\ge p_c^{\smallSWE}\ge p_c^{\smallNSEWalt}$. 

Estimates are 
$p_c^{\smallNE}\approx 0.7055$, $p_c^{\smallSWE}\approx 0.6317$ and $p_c^{\smallNSEWalt}\approx 0.5927$. 
See Hughes \cite{Hughes} for references.
The best rigorous bounds the authors are aware of are that $p_c^{\smallNE}\in[0.6882,0.7491]$, $p_c^{\smallSWE}\in[0.5972,0.7491]$, and $p_c^{\smallNSEWalt}\in[0.5416,0.6795]$.
Gray, Smythe, and Wierman \cite{GSW} give the lower bounds for $p_c^{\smallNE}$ and $p_c^{\smallSWE}$. Balister, Bollob\'as, and Stacey \cite{BBS} give the upper bound for $p_c^{\smallNE}$, which implies that for $p_c^{\smallSWE}$. The $p_c^{\smallNSEWalt}$ bounds are from Men'shikov and Pelikh \cite{MenP} and Wierman \cite{Wierman95}. 
In Section \ref{sec:M_x} we will establish rigorous bounds on certain other critical values, that appear to improve bounds in the literature. 

For the model $(\NSEWalt\,\cdot)$, the cluster of $o$ in the usual site-percolation sense is our $\mc{M}_o$.   
Moreover, $\mc{B}_o=\mc{M}_o$ provided $\mc{G}_o=\NSEWalt$, and points in $\mc{C}_o$ are either in $\mc{M}_o$ or are neighbours of such points. These statements all follow because in this model, any connected path of $\NSEWalt$ sites is necessarily connected in both directions.  The following result is a kind of generalisation of this idea.  
\begin{LEM}
\label{lem:B_oC_o}
Suppose there exists $A\ne \emptyset$ such that $\mu(A)=p$ and $\mu(\emptyset)=1-p$.  Then $\theta_+=p\theta_-$. 
\end{LEM}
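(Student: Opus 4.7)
My plan is to rewrite both $\theta_+$ and $\theta_-$ in terms of infinite self-avoiding (SA) walks and then exploit the reflection symmetry $x \mapsto -x$ of the i.i.d.~openness field $S_x := \mathbb{1}[\mathcal{G}_x = A]$, which is Bernoulli$(p)$ on $\mathbb{Z}^d$.

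First, since $\mathbb{Z}^d$ is locally finite and $|A|<\infty$, K\"onig's lemma applied to the tree of finite open SA $A$-walks from $o$ yields: $|\mathcal{C}_o|=\infty$ if and only if there exists an infinite SA walk $o=v_0,v_1,v_2,\dots$ with $v_{i+1}-v_i\in A$ and $S_{v_i}=1$ for every $i\ge 0$ (the open-site condition at $v_i$ is needed for each outgoing step, including the first, so in particular $\mathcal{G}_o=A$). Similarly, $|\mathcal{B}_o|=\infty$ if and only if there exists an infinite SA left-path $\dots,v_{-2},v_{-1},v_0=o$ with $A$-steps and $S_{v_i}=1$ for every $i\le -1$, with \emph{no} condition imposed on $S_o$.

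Next, the map $(v_0,v_1,v_2,\dots)\mapsto(\dots,-v_2,-v_1,-v_0=o)$, obtained by reflecting through the origin and then reversing the indexing, sends infinite SA $A$-walks from $o$ bijectively onto infinite SA $A$-left-paths ending at $o$. It converts the openness requirement ``$S_{v_i}=1$ for all $i\ge 0$'' into ``$\tilde S_{-v_i}=1$ for all $i\ge 0$'', where $\tilde S_x:=S_{-x}$. Because $\tilde S\stackrel{d}{=}S$, the probability that an infinite open SA $A$-walk from $o$ exists equals the probability that an infinite open SA $A$-left-path ending at $o$ exists in which all visited sites, \emph{including} $o$, are open. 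The former probability is $\theta_+$; the latter factors, by independence of $S_o$ from $(S_x)_{x\ne o}$ together with the characterisation of $\theta_-$ above, as $p\cdot \theta_-$. Hence $\theta_+=p\theta_-$.

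The main delicate point is the bookkeeping: K\"onig's lemma provides the SA-walk characterisations, and the reflection-plus-reversal is a clean bijection, but one must track carefully that the openness requirement imposed at the starting point $o$ in the $\theta_+$ description is ``moved to'' the endpoint under the bijection, and is not imposed at $o$ in the $\theta_-$ description --- this shift is precisely the source of the factor $p$.
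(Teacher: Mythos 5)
Your proof is correct and is essentially the paper's argument: both rest on the reflection $x\mapsto -x$ of the i.i.d.~environment to turn infinite open paths from $o$ into infinite open paths to $o$, and both extract the factor $p$ from the observation that the openness of $o$ itself is required for $|\mc{C}_o|=\infty$ but not for $|\mc{B}_o|=\infty$, using independence of $\mc{G}_o$ from the rest of the environment. The paper phrases this by first decomposing on $\{\mc{G}_o=A\}$ and passing to the modified environment $\tilde{\mc{G}}$, whereas you characterise $\theta_\pm$ directly via infinite self-avoiding walks (with K\"onig's lemma making the reduction explicit); this is a cosmetic difference only.
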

\proof 
Given an environment $\mc{G}$, define an environment $\ffG$ (having the same law as $\mc{G}$) by
$\ffG_x=\mc{G}_{-x}$.  
Let $\tilde{\mc{G}}$ 
be the environment obtained from $\mc{G}$ by replacing $\mc{G}_o$ by $\emptyset$.
Note that 
\begin{equation}
\lbeq{connections}
\{|\mc{C}_o|=\infty\}=\{\mc{G}_o=A\}\cap \big\{\exists x_1\in \{o+v:v\in A\}: |\mc{C}_{x_1}|=\infty \text{ in }\tilde{\mc{G}}\big\}.
\end{equation}
The two events on the right of \refeq{connections} are independent since the first depends only on $\mc{G}_o$, while the latter depends only on $\tilde{\mc{G}}$.


The second event on the right of \refeq{connections} occurs if and only if there is an infinite self-avoiding path $\vec{x}=\{x_i\}_{i\ge 1}$ of sites such that
for each $i\ge 1$, $\mc{G}_{x_{i}}=A$ and $x_{i}=x_{i-1}+v_i$ (with $x_0\equiv o$) for some $v_i\in A$. 
Such a path $\vec{x}$ exists in $\mc{G}$ if and only if 
the path $-(\vec{x})=\{-x_i\}_{i\ge 1}$
is such that for each $i\ge 1$, $\ffG_{-x_{i}}=A$ and $-x_{i-1}=-x_{i}+v_i$ (with $x_0\equiv o$) for some $v_i\in A$, i.e.~the path $-(\vec{x})$
is an infinite open path to the origin in $\ffG$.  
Thus we have shown that
\[\big\{\exists x_1\in \{o+v:v\in A\}: |\mc{C}_{x_1}|=\infty \text{ in }\tilde{\mc{G}}\big\}=\big\{|\mc{B}_o|=\infty \text{ in }\ffG\big\}.\]
It follows that
\eqalign
\nu(|\mc{C}_o|=\infty\})=&\nu(\{\mc{G}_o=A\})\nu\big(\exists x_1\in \{o+v:v\in A\}: |\mc{C}_{x_1}|=\infty \text{ in }\tilde{\mc{G}}\big)\nn\\
=&p\nu\big(|\mc{B}_o|=\infty \text{ in }\ffG\big)=p\nu\big(|\mc{B}_o|=\infty \text{ in }\mc{G}\big)=p\nu\big(|\mc{B}_o|=\infty\big)\nn.
\enalign
\qed

Note that this result also holds on the triangular lattice in 2-dimensions (this fact will be used in the next section).

\section{The set of points $\mc{B}_y$ from which $y$ can be reached}
\label{sec:B_x}
There are cases in which points can only ever be reached from finitely many locations. This is known in the case of coalescing random walks (see \cite{TW}). We require only the 2-dimensional version.
\begin{LEM}
\label{lem:finiteB}
In the model $(\uparrow\,\rightarrow)$ with $p\in (0,1)$, $\theta_-=0$.
\end{LEM}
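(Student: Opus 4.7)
The plan is to expose $\mc{B}_o$ one anti-diagonal at a time and show that the number $L_n$ of level-$n$ sites evolves as a symmetric nearest-neighbour random walk absorbed at $0$.

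Since $|\mc{G}_x|=1$ everywhere, every $x\in\Z^2$ has a unique forward nearest-neighbour path $\gamma_x$, moving up or right at each step. Then $x\in\mc{B}_o$ iff $\gamma_x$ passes through $o$, which forces $x^{[1]},x^{[2]}\le 0$; moreover $\gamma_x$ reaches the anti-diagonal $\Delta_0:=\{z:z^{[1]}+z^{[2]}=0\}$ in exactly $-x^{[1]}-x^{[2]}$ steps. Writing $\Delta_{-n}:=\{z:z^{[1]}+z^{[2]}=-n\}$ and $\mc{B}_o^{(n)}:=\mc{B}_o\cap\Delta_{-n}$, one has $\mc{B}_o=\bigsqcup_{n\ge 0}\mc{B}_o^{(n)}$. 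First I would establish an interval property: each $\mc{B}_o^{(n)}$ consists of consecutive sites along $\Delta_{-n}$. Indeed, for starting points $x_i=(-i,i-n)$ on $\Delta_{-n}$ with $i<j$, both $\gamma_{x_i}$ and $\gamma_{x_j}$ always share a common anti-diagonal (each step increments the index by $1$), so equality of their first coordinates forces the two points to coincide. Thus $D_k:=\gamma_{x_j}(k)^{[1]}-\gamma_{x_i}(k)^{[1]}$ is a nearest-neighbour walk on $\Z_{\le 0}$ starting at $i-j<0$ that is absorbed on reaching $0$; in particular $D_n\le 0$, so $i\mapsto\gamma_{x_i}(n)^{[1]}$ is non-increasing, and its preimage of $0$ (which indexes $\mc{B}_o^{(n)}$) is an integer interval. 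Label its elements NW-to-SE as $y_1,\dots,y_{L_n}$.

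The main step is an explicit recursion for $L_n$. The $L_n+1$ potential children of $\mc{B}_o^{(n)}$ on $\Delta_{-(n+1)}$ are, in NW-to-SE order,
\[y_1-e_1,\ y_1-e_2,\ y_2-e_2,\ \dots,\ y_{L_n-1}-e_2,\ y_{L_n}-e_2.\]
Each interior site $z=y_k-e_2=y_{k+1}-e_1$ (for $k=1,\dots,L_n-1$) lies automatically in $\mc{B}_o$: whether $\mc{G}_z=\{\uparrow\}$ or $\{\rightarrow\}$, $\gamma_z$ next visits one of $y_k,y_{k+1}\in\mc{B}_o$. Only the two boundary candidates are random: $y_1-e_1\in\mc{B}_o$ iff $\mc{G}_{y_1-e_1}=\{\rightarrow\}$ (probability $1-p$), and $y_{L_n}-e_2\in\mc{B}_o$ iff $\mc{G}_{y_{L_n}-e_2}=\{\uparrow\}$ (probability $p$). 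Both boundary sites live on the fresh anti-diagonal $\Delta_{-(n+1)}$, so the indicators $\xi_n\sim\mathrm{Ber}(1-p)$ and $\eta_n\sim\mathrm{Ber}(p)$ are independent of each other and of all data revealed through level $n$. Consequently, on $\{L_n\ge 1\}$,
\[L_{n+1}=(L_n-1)+\xi_n+\eta_n,\qquad\text{while }L_{n+1}=0\text{ on }\{L_n=0\}.\]

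Starting from $L_0=1$, $(L_n)_{n\ge 0}$ is therefore a symmetric nearest-neighbour random walk on $\Z_{\ge 0}$ with steps in $\{-1,0,+1\}$ of probabilities $\{p(1-p),\,p^2+(1-p)^2,\,p(1-p)\}$, absorbed at $0$. For $p\in(0,1)$ the $\pm 1$ probabilities are strictly positive, so this one-dimensional symmetric walk hits $0$ in finite time a.s., whence $|\mc{B}_o|=\sum_n L_n$ is a.s.\ a finite sum of finite non-negative integers and $\theta_-=0$. The only point needing care is the independence claim above: the level-$\le n$ exploration consults environments only on $\Delta_0,\Delta_{-1},\dots,\Delta_{-n}$, whereas $y_1-e_1$ and $y_{L_n}-e_2$ sit on $\Delta_{-(n+1)}$, so $\xi_n,\eta_n$ are genuinely fresh; everything else is elementary once the interval property coming from the coalescing-walks monotonicity is in place.
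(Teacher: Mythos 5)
Your proof is correct, but the core argument differs from the paper's. The paper defines $X_n$ to be the number of points of $\mc{B}_o$ on the $n$th anti-diagonal and shows directly, by conditioning on the environment on the first $n$ anti-diagonals and re-indexing the sum, that $X_n$ is a non-negative martingale; it then invokes martingale convergence and concludes the limit must be $0$. That calculation needs no interval structure, so the paper's route is shorter. Your version instead establishes that each cross-section of $\mc{B}_o$ is an interval and reads off the exact transition kernel of its length $L_n$: a lazy symmetric nearest-neighbour walk with $\pm 1$ increments of probability $p(1-p)$ each, absorbed at $0$, so one-dimensional recurrence finishes. This is more concrete, and it also makes airtight the step the paper leaves terse -- ``the only possible limit is $0$'' requires a uniformly positive conditional probability of a nonzero increment on $\{X_n\ge 1\}$, which your explicit kernel supplies but the bare martingale identity does not. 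As a further remark, the interval lemma you isolate is exactly the structural device the paper deploys later (Propositions \ref{prp:infiniteB1} and \ref{prp:infiniteB2}) for the harder $\theta_->0$ analyses, so the extra work is not wasted.
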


\proof 
Let $L_n=\{x:x^{[1]}+x^{[2]}=-n\}$. Set $X_n=\#\{x\in L_n: x\to o\}$, and let $\mc{F}_n$ be the $\sigma$-field generated by the environment on or above $L_n$. Then 
\begin{align*}
\mE[X_{n+1}\mid \mc{F}_n]
&=\sum_{x\in L_{n+1}}\Big(p1_{\mc{B}_o}(x+e_2)+(1-p)1_{\mc{B}_o}(x+e_1)\Big)\\
&=\sum_{y\in L_n\cap\mc{B}_o}\Big(p+(1-p)\Big)
=X_n.
\end{align*}
Thus $X_n$ is a non-negative 
martingale with respect to $\mc{F}_n$, whence it converges as $n\to\infty$, and the only possible limit is 0.
\qed

As with Lemma \ref{lem:exist_inf_C}, since $\{\exists x:|\mc{B}_x|=\infty\}$ is a tail event, we have the following result.
\begin{LEM}
\label{lem:exist_inf_B}
If $\theta_->0$ then $\nu(\exists x:|\mc{B}_x|=\infty)=1$.
\end{LEM}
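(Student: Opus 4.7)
The plan is to mimic the proof of the analogous statement Lemma \ref{lem:exist_inf_C}, namely to check that the event $E=\{\exists x:|\mc{B}_x|=\infty\}$ is measurable with respect to the tail $\sigma$-algebra of the i.i.d.\ family $\{\mc{G}_x\}_{x\in\Z^d}$, and then invoke Kolmogorov's $0$-$1$ law together with the bound $\nu(E)\geq \theta_-$.

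The main thing to verify is the tail property: if $E$ holds in a configuration $\mc{G}$, then $E$ still holds in any configuration $\mc{G}'$ that agrees with $\mc{G}$ off a finite set $F\subset\Z^d$. Here a little care is needed, since unlike the forward case, $\mc{B}_{x^*}$ being infinite does not directly produce a single infinite self-avoiding forward path from $x^*$. What I would do instead is build the backward tree $T$ rooted at some $x^*$ with $|\mc{B}_{x^*}|=\infty$, whose nodes are the self-avoiding sequences $(w_0,w_1,\dots,w_n)$ with $w_0=x^*$ and $w_i-w_{i+1}\in \mc{G}_{w_{i+1}}$ for each $i<n$. Each node has at most $2d$ children, and every $y\in \mc{B}_{x^*}$ appears as the endpoint of some node (remove loops from any path $y\to x^*$). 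Since $|\mc{B}_{x^*}|=\infty$, the set of endpoints is infinite, so $T$ is infinite; by K\"onig's lemma it contains an infinite branch $x^*=w_0,w_1,w_2,\dots$.

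Now I would finish the tail argument as follows. The branch is self-avoiding and infinite, so only finitely many $w_i$ lie in $F$; pick $N$ with $w_i\notin F$ for all $i\geq N$. Then for each $i>N$ the edge ``$w_i$ steps to $w_{i-1}$'' depends only on $\mc{G}_{w_i}$, which is unchanged in $\mc{G}'$. Therefore in $\mc{G}'$ we still have $w_i\to w_{i-1}\to\cdots\to w_N$ for every $i>N$, so $\{w_{N+1},w_{N+2},\dots\}\subset \mc{B}_{w_N}$ and in particular $|\mc{B}_{w_N}|=\infty$ in $\mc{G}'$. Hence $E$ is invariant under modifications on any finite set, which (by a standard approximation) gives $E$ measurable with respect to the tail $\sigma$-algebra.

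The step I expect to need the most care is the K\"onig's lemma application, because one has to argue at the level of backward self-avoiding paths (rather than the set $\mc{B}_{x^*}$ itself) to obtain the single infinite branch; otherwise the argument is entirely parallel to Lemma \ref{lem:exist_inf_C}. Once the tail property is in hand, Kolmogorov's $0$-$1$ law gives $\nu(E)\in\{0,1\}$, and since $\nu(E)\geq \nu(|\mc{B}_o|=\infty)=\theta_->0$, we conclude $\nu(E)=1$.
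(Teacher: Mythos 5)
Your proposal is correct and takes the same route the paper gestures at: the paper simply asserts that $\{\exists x:|\mc{B}_x|=\infty\}$ is a tail event and appeals to the Kolmogorov zero--one law, without supplying the verification. The K\"onig's-lemma step you add (extracting an infinite backward self-avoiding open path and discarding the finite head that meets $F$) is the right way to check the tail property, and is a sound completion of the argument the paper leaves implicit.
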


We now turn to a class of results, giving environments under which $\theta_->0$. We start with a trivial criterion which applies e.g.~to the 2-valued model $(\NE\uparrow)$.
\begin{align}
\text{If there is an }e \text{ such that }\mu(\{A:e\in A\})=1 \text{ then }\theta_-=1.
\label{BhasAhalfline}
\end{align}
More interesting are the cases where $\theta_-\in (0,1)$, e.g.~the model $(\WE\,\uparrow)$ (see Figure \ref{fig:WE_N_B_o}).  
\begin{figure}
\vspace{-3cm}
\includegraphics[scale=.5]{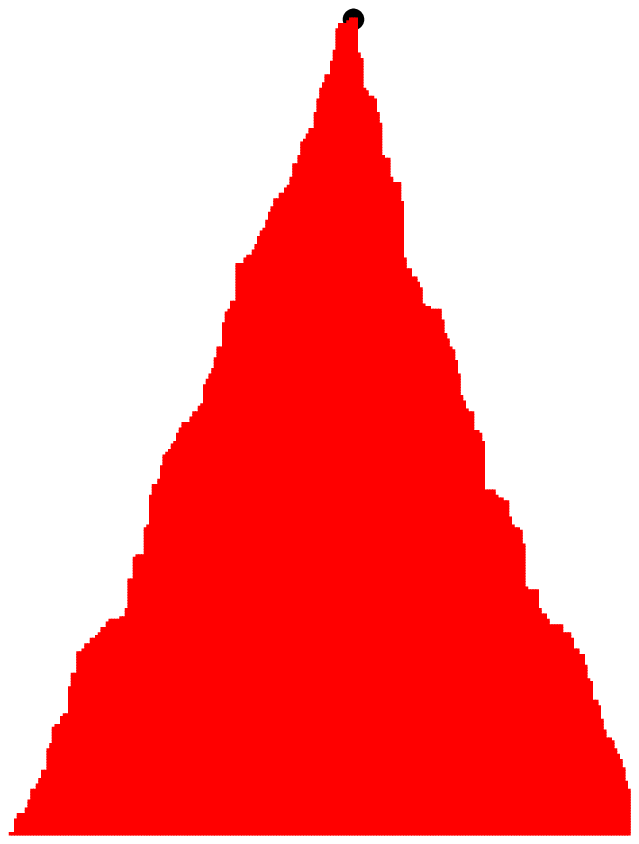}
\vspace{-.5cm}
\caption{Parts of realisations of the set $\mc{B}_o$ for the model $(\WE\,\uparrow)$ 
 for two values of $p$ ($p=.7$ and $p=.3$). Centred at $o$. NOTE: $p=.7$ version deleted because of lack of space. Consult the published copy or authors' websites for a complete version.}
\label{fig:WE_N_B_o}
\end{figure}
\begin{PRP}
\label{prp:infiniteB1}
For the model $(\WE\,\uparrow)$ with $p\in (0,1)$:
\begin{enumerate}
\item $\theta_-\in (0,1)$;
\item almost surely, on the event that $\mc{B}_o$ is infinite, there exists an infinite open path $\{x_{-n}\}_{n\ge 0}$ ending at $o$ with 
$x_{-n}^{[2]}\to-\infty$ (monotonically); 
\item $\nu$-a.s., if $|\mc{B}_x|=|\mc{B}_y|=\infty$ then $|\mc{B}_x\cap\mc{B}_y|=\infty$. 
\end{enumerate}
\end{PRP}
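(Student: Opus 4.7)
The plan is to handle the three assertions in turn, exploiting the recursive structure of $\mc{B}_o$ in the row-slice $S_n := \{i\in\Z : (i,-n)\in \mc{B}_o\}$. The slice satisfies
\begin{equation*}
S_{n+1}\;=\;T_{n+1}\;\cup\;\bigcup\bigl\{[a,b]:[a,b]\text{ a $\WE$-run in row $-(n+1)$ with $a-1\in S_n$ or $b+1\in S_n$}\bigr\},
\end{equation*}
where $T_{n+1} := \{i\in S_n : (i,-(n+1))\in\{\uparrow\}\}$ is the set of vertical seeds. This recursion drives the whole argument.

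For (a), the bound $\theta_-<1$ is immediate: the event $\{\mc{G}_{(0,-1)}=\{\rightarrow,\leftarrow\},\ \mc{G}_{(\pm 1,0)}=\{\uparrow\}\}$ has probability $p(1-p)^2>0$ and leaves $o$ with no incoming edges, so $\mc{B}_o=\{o\}$. For $\theta_->0$, I compute $\mE[|S_{n+1}|\mid S_n=\{i\}]=(1-p)\bigl(1+2p/(1-p)\bigr)=1+p>1$ --- the single seed contributes itself plus two independent $\WE$-runs of mean length $p/(1-p)$ --- and then obtain positive survival of $(|S_n|)$ by comparison with a supercritical Galton--Watson process, either directly or after a block renormalisation that stochastically dominates from below a supercritical oriented percolation.

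For (b), observe first that if $S_n=\emptyset$ for some $n$ then $T_{n+1}=\emptyset$, hence $S_{n+1}=\emptyset$ by the recursion; iterating, and using that each $|S_k|$ is a.s.\ finite (a finite union of geometric $\WE$-runs), gives $|\mc{B}_o|<\infty$. So on $\{|\mc{B}_o|=\infty\}$ the set $\mc{B}_o$ meets every row $-n$. The tree of self-avoiding backward paths from $o$ is finitely branching (at most three predecessors per site) and has nodes at every depth (each $(i,-n)\in\mc{B}_o$ produces one of length $\ge n$), so K\"onig's lemma yields an infinite ray $\{x_{-n}\}_{n\ge 0}$. Its $y$-coordinate is non-increasing --- decreasing by $1$ at each $\uparrow$-predecessor and unchanged at each $\WE$-predecessor --- and must tend to $-\infty$: if it were bounded below, the ray would eventually live in a single row, self-avoidance would force monotone motion there, and we would obtain an infinite one-sided $\WE$-run, a zero-probability event.

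Part (c) is the main work. Using (b) I fix infinite backward paths $P_x$ from $x$ and $P_y$ from $y$ whose $y$-coordinates tend to $-\infty$. Specialising to extremal paths --- the rightmost infinite-$\mc{B}$ backward path from $x$ and the leftmost infinite-$\mc{B}$ backward path from $y$, defined by always choosing the predecessor with infinite $\mc{B}$-cluster that extremises the $x$-coordinate --- I argue, in the spirit of Lemma~\ref{lem:coalescence1}, that once synchronised by row of descent the difference of the two $x$-coordinates evolves as a bounded-step, zero-drift random walk on $\Z$. By null recurrence the difference hits $0$ a.s., and from the meeting row onward the common extremal rule forces the paths to agree, so they share an infinite tail contained in $\mc{B}_x\cap\mc{B}_y$, yielding $|\mc{B}_x\cap\mc{B}_y|=\infty$. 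The main obstacle is precisely this coalescence step: one must verify that ``extremal predecessor with infinite $\mc{B}$'' is well-defined and measurable on the survival event, and that the step distribution of the coordinate difference genuinely is symmetric with zero drift despite the two paths sharing the same underlying environment.
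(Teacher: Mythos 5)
Your reduction to the slice sets $S_n$ and the recursion driving them is the same starting point as the paper's, which proves (by induction on $n$) that $S_n$ is a single interval $[L_n,U_n]\cap\Z$ and then studies the Markov chain $D_n=(U_n-L_n+1)_+=|S_n|$. However, your argument for $\theta_->0$ has a genuine gap. The Markov chain $|S_n|$ is \emph{not} comparable with a Galton--Watson process. The transition from $|S_n|=k$ is: die (go to $0$) with probability $p^k$, else move to $k+L+R$ where $L$ and $R$ are geometric extensions at the two endpoints of the interval. The surviving increment $L+R$ has a mean that is a constant $2p/(1-p)$ independent of $k$, so the process grows \emph{additively} like a random walk with positive drift, not multiplicatively like a supercritical branching process. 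Your computation $\mE[|S_{n+1}|\mid S_n=\{i\}]=1+p$ is correct for a singleton, but for $k\ge 2$ you have $\mE[|S_{n+1}|\mid |S_n|=k]=(1-p^k)(k+2p/(1-p))$, not $k(1+p)$; the ``offspring'' of distinct elements of $S_n$ are entirely shared rather than independent. No direct GW domination exists, and the block renormalisation alternative you mention would be a substantial separate argument that you do not carry out. The paper avoids all this by the simple observation that $D_n$ can only increase or drop to $0$, with the death probability $p^{D_n}$ decaying geometrically, so the product over successive jump times of the survival probabilities is strictly positive.

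Your part (b) via K\"onig's lemma is a valid and slightly different route from the paper's (which builds a monotone sequence of finite paths from $(U_n,-n)$ to $o$ and takes their limit); the $\uparrow$/$\WE$ dichotomy does indeed force the $y$-coordinate to descend to $-\infty$ on any infinite self-avoiding ray, as you note.

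Part (c) is where your approach diverges most, and unnecessarily so. Once you have the interval structure and the fact that $D_n\to\infty$ on survival, it follows directly that $L_n\downarrow-\infty$ and $U_n\uparrow+\infty$ (both are monotone, each moves with probability bounded away from zero at each step, and their difference diverges). Then for any $x,y$ with both $\mc{B}$ infinite, the two intervals $[L_n(x),U_n(x)]$ and $[L_n(y),U_n(y)]$ eventually cover any fixed bounded window, hence overlap for all large $n$, which immediately gives $|\mc{B}_x\cap\mc{B}_y|=\infty$. The extremal-path coalescence scheme you sketch is not only more elaborate but also contains unsubstantiated claims: the difference of the two extremal paths' first coordinates is not a bounded-step walk ($\WE$-runs are unbounded), its step distribution is not obviously symmetric or zero-drift (the two paths read the \emph{same} environment and may share runs), and the well-definedness of ``rightmost/leftmost predecessor with infinite $\mc{B}$'' is itself nontrivial. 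You flag this as ``the main obstacle,'' which is fair, but in this model the obstacle disappears once you observe the expanding-interval structure of $\mc{B}_o$ and do not need coalescence of individual paths at all.
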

\proof
Let $\mc{L}_n=\Z\times \{-n\}$ and define $L_n$ and $U_n$ to be the infimum and supremum of the projection of $\mc{L}_n\cap \mc{B}_o$ on the 1st coordinate axis. 
Of course, if this set is empty then $L_n=\infty$ and $U_n=-\infty$.    
We claim that for each $n\ge 0$,
\begin{align}
\mc{L}_n\cap \mc{B}_o=\big([L_n,U_n]\cap\Z\big)\times\{-n\}.\label{LUblock}
\end{align}
The claim is established by induction, with the case $n=0$ being trivially true since there are no downward arrows.  

Assume this statement for $n$.  Suppose there is at least one $z\in[L_n,U_n]$ such that $\mc{G}_{(z,-(n+1))}=\{\uparrow\}$. Then $(z,-(n+1))$ connects to $o$ as well, as does any $(w,-(n+1))$ which connects to $(z,-(n+1))$ by a sequence of $\rightarrow$ or $\leftarrow$. Thus $(w,-(n+1))$ connects to $o$ whenever $L_n\le w\le U_n$, either directly or via such a $z$.  This is also the case for any $w=(w_1,-(n+1))$ such that for every $k\in [w_1,L_n)\cap \Z$, the environment at $(k,-(n+1))$ is $\WE$, but not other vertices to the left of $(L_n,-(n+1))$.  Similarly for $w$'s to the right of $U_n$ at level $-(n+1)$. In other words, $L_{n+1}\le L_n$ and $U_n\le U_{n+1}$, and the set of $(w,-(n+1))$ connecting to $o$ forms an interval. On the other hand, if there is no $z\in[L_n,U_n]$ such that $\mc{G}_{(z,-(n+1))}=\{\uparrow\}$, then no vertex with 2nd coordinate $-(n+1)$ connects to $o$ at all.  Therefore for each $n$, either this interval expands ($L_{n+1}\le L_n$ and $U_n\le U_{n+1}$) or it disappears altogether ($=\emptyset$).  This verifies (\ref{LUblock}).

Consider the number of integers $D_n=(U_n-L_n+1)_+$ in the interval $[L_n,U_n]$.  It is easily checked that $D_n$ is a Markov chain, that transitions $k\mapsto 0$ have probability $\alpha(k)=p^k$, that transitions $k\mapsto k$ have probability $(1-p)^2(1-p^k)$, and therefore that all other transitions combined have probability 
$$
\beta(k)=1-p^k-(1-p)^2(1-p^k)=(1-p^k)p(2-p)\ge (1-p)p(2-p)=c>0.
$$
Set $T_0=0$, and let $T_{k+1}=\min\{n>T_k:D_n\neq D_{T_k}\}$ be the times $D_n$ changes values. Clearly $D_0\ge 1$, and by induction, $D_{T_k}$ is either at least $k$ or it equals $0$. Therefore
$$
\nu(D_{T_{k+1}}>D_{T_k}\mid\mc{F}_{T_k})
=\frac{\beta(D_{T_k})}{\alpha(D_{T_k})+\beta(D_{T_k})}=1-\frac{\alpha(D_{T_k})}{\alpha(D_{T_k})+\beta(D_{T_k})}\ge 1-\frac{p^k}{c}.
$$
Choose $\kappa$ so large that $p^{\kappa}<c/2$, and $\gamma$ such that $e^{-\gamma t}<1-t$ for $0\le t\le 1/2$. Then the above expression is $\ge e^{-p^k\gamma /c}$ for $k\ge \kappa$. So by the strong Markov property, and convergence of $\sum p^k$, 
$$
\nu(D_n>0\quad\forall n)\ge \nu(D_{T_\kappa}>0)\prod_{j=\kappa}^\infty e^{-p^k\gamma/c}>0.
$$
Thus in fact $D_n>0$ for every $n$, with positive probability. Whenever all $D_n>0$, it follows that $D_n\to\infty$ and $\mc{B}_o$ is infinite. Thus $\theta_->0$.
To see that it is $\theta_+<1$ as well, just observe that the configuration $\mc{G}_{-e_1}=\uparrow=\mc{G}_{e_1}$, $\mc{G}_{-e_2}=\leftrightarrow$ establishes that $\nu(\mc{B}_o=\{o\})>0$. 

For future reference, notice that it follows from our proof that when $\mc{B}_o$ is infinite, it is almost surely also the case that $L_n\downarrow-\infty$ and $U_n\uparrow\infty$. 

To obtain a semi-infinite path through $\mc{B}_o$, observe that we have at least one finite path from $(U_n,-n)$ to $o$,
 for each $n$.  These can in fact be chosen to form a monotone sequence of paths, in the sense that if two such paths ever meet, we make them coalesce. It follows that the paths so chosen converge as $n\to\infty$. The limit is the desired semi-infinite path.

Finally, the fact that $\mc{B}_x\cap \mc{B}_y$ is infinite, whenever $\mc{B}_x$ and $\mc{B}_y$ are follows immediately from the monotonicity of $L_n$ and $U_n$, and the fact that $D_n\to\infty$. 
\qed


We now will establish the same type of result, for the model $(\NE\,\leftarrow)$.  See Figure \ref{fig:NE_W_B_o_proof}. 
Between them, Propositions \ref{prp:infiniteB1} and \ref{prp:infiniteB2} will allow us to decide whether $\theta_-\in (0,1)$, for many 2-valued 2-dimensional models. See Table \ref{tab:connections} for more details. 

\begin{PRP}
\label{prp:infiniteB2}
The assertions of Proposition \ref{prp:infiniteB1} also hold for the model $(\NE\,\leftarrow)$ with $p\in (0,1)$.
\end{PRP}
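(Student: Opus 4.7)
The plan is to mirror Proposition~\ref{prp:infiniteB1}, with the adaptations needed for the $(\NE\,\leftarrow)$ model. The crucial preliminary observation is that every arrow in this model is one of $\uparrow$, $\rightarrow$, $\leftarrow$, so the second coordinate $x^{[2]}$ is non-decreasing along any open path. In particular $\mc{B}_o\subset\{y:y^{[2]}\le 0\}$, and I will analyze $\mc{B}_o$ level by level using $\mc{L}_n=\Z\times\{-n\}$.

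First I would establish the interval property: $\mc{B}_o\cap\mc{L}_n=[L_n,U_n]\times\{-n\}$ (possibly empty) for each $n\ge 0$, by induction on $n$. Given the interval on level $-n$, any path from $(w,-(n+1))$ to $o$ traverses level $-(n+1)$ horizontally and then performs a single up-step from a $\NE$-site into $[L_n,U_n]\times\{-n\}$ (since $x^{[2]}$ can never decrease, the jump from level $-(n+1)$ to $-n$ happens exactly once). The key structural fact is that the set of $\NE$-sites on level $-(n+1)$ reachable by horizontal moves from $(w,-(n+1))$ takes a simple form: if $w$ lies in a maximal $\leftarrow$-run $[c,d]$, the only reachable $\NE$-site is $(c-1,-(n+1))$ (one is forced left and blocked there); if $w$ lies in a maximal $\NE$-run $[a,b]$, the reachable $\NE$-sites are exactly $[w,b]\times\{-(n+1)\}$. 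A direct left-to-right scan of the run decomposition then shows that the set of $w$ whose exit set meets $[L_n,U_n]$ is again a (possibly empty) interval, completing the induction.

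Set $D_n=U_n-L_n+1$, with $D_n=0$ when the slice is empty. By the iid-ness of the environment and translation invariance, $D_n$ is a Markov chain on $\{0,1,2,\dots\}$ with $0$ absorbing, and
\[
\nu(D_{n+1}=0\mid D_n=k)=(1-p)^k,
\]
since dying corresponds precisely to every site of $[L_n,U_n]\times\{-(n+1)\}$ being $\leftarrow$. Unlike in Proposition~\ref{prp:infiniteB1}, this chain is not monotone-until-death: one checks from Step~1 that $U_{n+1}\ge U_n$ whenever $D_{n+1}>0$, but $L_{n+1}$ can move either way according to whether $\omega_{(L_n,-(n+1))}$ is $\NE$ (forcing $L_{n+1}\le L_n$ by extending leftward through the $\NE$-run) or $\leftarrow$ (forcing $L_{n+1}>L_n$ by jumping past the $\leftarrow$-run containing $L_n$). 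A short conditional computation using the geometric distributions of $\NE$- and $\leftarrow$-run lengths gives the asymptotic drift
\[
\mE[D_{n+1}-D_n\mid D_n=k,\,D_{n+1}>0]\ \longrightarrow\ \frac{p^2}{1-p}>0\qquad\text{as }k\to\infty,
\]
while the death probability $(1-p)^k$ decays geometrically in $k$. A standard Foster--Lyapunov argument (or a direct coupling of $D_n$ with a positive-drift random walk restricted to survive on $\{1,2,\dots\}$) then yields $\nu(D_n>0\text{ for all }n\mid D_0=1)>0$, hence $\theta_->0$. The bound $\theta_-<1$ follows from the explicit blocking configuration $\mc{G}_{-e_1}=\leftarrow$, $\mc{G}_{e_1}=\NE$, $\mc{G}_{-e_2}=\leftarrow$, which forces $\mc{B}_o=\{o\}$ and has positive probability. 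Assertions (b) and (c) then proceed largely as in Proposition~\ref{prp:infiniteB1}: for (b), on the survival event $U_n\to\infty$, so finite paths from $(U_n,-n)$ to $o$ chosen in a coalescing manner (made to follow a common continuation once they meet) produce a monotone sequence of paths whose limit is the required semi-infinite path ending at $o$ with $x_{-n}^{[2]}\to-\infty$; for (c), on the joint-survival event the widths $D_n^{(x)},D_n^{(y)}$ grow linearly in $n$ (by the positive drift) while the two intervals live at horizontal offset $O(|x-y|)$, so they overlap on all sufficiently deep levels, giving $|\mc{B}_x\cap\mc{B}_y|=\infty$ a.s.

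The main obstacle is Step~2: the loss of monotonicity in $L_n$ invalidates the clean induction (``$D_{T_k}\ge k$ or $0$'') used in Proposition~\ref{prp:infiniteB1}, so one must instead verify the positive-drift estimate and run a Lyapunov/coupling argument, paying particular attention to the small-$k$ regime where the absorption probability $(1-p)^k$ is not negligible.
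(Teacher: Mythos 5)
Your structural skeleton matches the paper's: establish the interval property $\mc{B}_o\cap\mc{L}_n=[L_n,U_n]\times\{-n\}$, study the width $D_n$, and observe that $U_n$ stays monotone while $L_n$ does not, which is precisely the new difficulty relative to Proposition~\ref{prp:infiniteB1}. Your drift value $p^2/(1-p)$ and the death probability $(1-p)^k$ are also correct, and the trapping configuration for $\theta_-<1$ works.

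The genuine gap is exactly the step you flag at the end. You invoke ``a standard Foster--Lyapunov argument (or a direct coupling of $D_n$ with a positive-drift random walk restricted to survive)'' but do not carry it out, and the conditional-drift statement you write, $\mE[D_{n+1}-D_n\mid D_n=k,\,D_{n+1}>0]\to p^2/(1-p)$, is not directly the quantity any standard Lyapunov theorem acts on: conditioning on survival changes the kernel, and ``positive conditional drift plus geometrically decaying absorption'' does not by itself produce a survival probability bound without more work in the small-$k$ regime, as you yourself note. The paper sidesteps this entirely by a different device. It computes the exact transition law of the pair $(L_n,U_n)$ and exhibits two \emph{independent} random walks $(L_n^0,U_n^0)$ with geometric-type increments such that, up to the killing time, $(L_n,U_n)$ and $(L_n^0,U_n^0)$ have identically the same law. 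Survival of $\mc{B}_o$ is then equivalent to the unconditioned i.i.d.-increment random walk $D_n^0=L_n^0-U_{n-1}^0$ never entering $[1,\infty)$; since $\mE[\Delta L_n^0]=\tfrac{1-p}{p}-\tfrac{p^2}{1-p}<\tfrac{1-p}{p}=\mE[\Delta U_n^0]$, this walk has strictly negative drift and the event has positive probability by elementary random walk theory. No conditioning, no Lyapunov function, no separate small-$k$ analysis. The same coupling then hands you part (c) for free: the strong law gives $U_n(x)/n\to(1-p)/p$ and $L_n(y)/n\to(1-p)/p-p^2/(1-p)$, so the upper edge from $x$ eventually overtakes the lower edge from $y$, which is cleaner than appealing to ``linear growth of widths.'' In short, your plan is the right shape, but the linchpin -- the positive-probability-of-survival argument -- is left as an assertion; to complete it you should replace the Lyapunov gesture with the exact coupling to independent walks (or actually carry out a Lyapunov/supermartingale bound, which would require additional estimates you have not supplied).
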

\proof
For $n\ge 0$, let $\mc{L}_n$, $L_n$ and $U_n$ be defined as in the proof of Proposition \ref{prp:infiniteB1}.
We will again show by induction that (\ref{LUblock}) holds.
\begin{figure}
\begin{center}
\includegraphics[scale=.45]{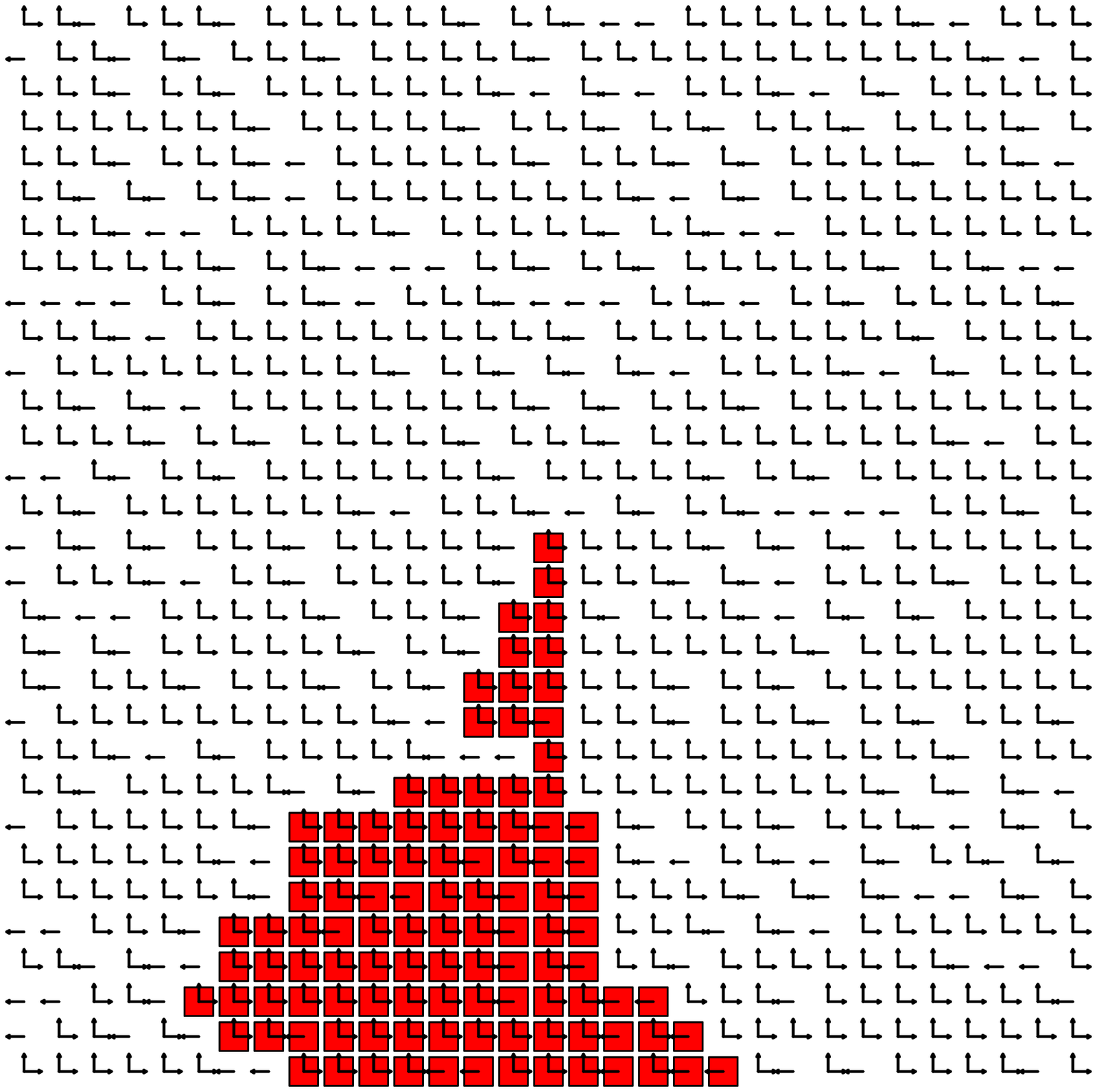}
\hspace{1cm}
\includegraphics[scale=.45]{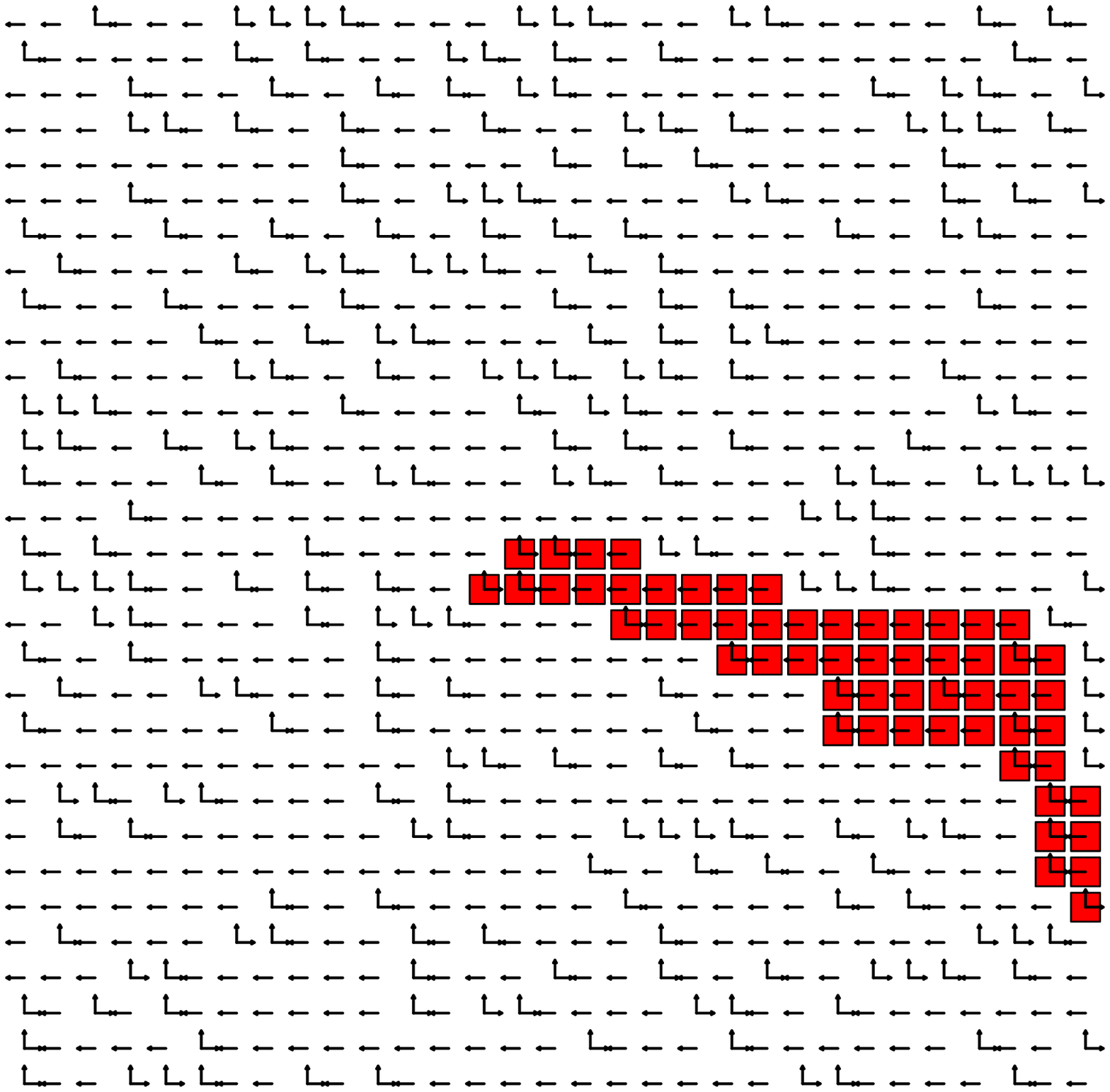}
\end{center}
\vspace{-.5cm}
\caption{Parts of realisations of the set $\mc{B}_o$ for the model with $\mu(\{\uparrow,\rightarrow\})=p=1-\mu(\{\leftarrow\})$ for two values of $p$ ($p=.7$ and $p=.3$). Centred at $o$.}
\label{fig:NE_W_B_o_proof}
\end{figure}

So assume (\ref{LUblock}) for $n$, and consider $\mc{B}_o\cap \mc{L}_{n+1}$. We will examine several cases separately.  First, suppose $l\le L_n\le U_n\le u$. Then $L_{n+1}=l$ and $U_{n+1}=u$ if and only if the following conditions hold: 
\begin{align*}
\mc{G}_{(l-1,-(n+1))}&=\leftarrow \\
\mc{G}_{(j,-(n+1))}&=\NE\quad\text{ for $l\le j\le L_n$}\\
\mc{G}_{(j,-(n+1))}&=\leftarrow\quad\text{ for $U_n<j\le u$}\\
\mc{G}_{(u+1,-(n+1))}&=\NE.
\end{align*}
To see this, observe that $x=(j,-(n+1))\notin\mc{B}_o$ for $j<l$, since tracing a path from $x$ can only reach $\mc{L}_n$ strictly to the left of $L_n$, and hence outside $\mc{B}_o$. Likewise 
$x=(j,-(n+1))\notin\mc{B}_o$ for $j>u$. For $l\le j\le L_n$ we can step to the right till we reach $(L_n,-(n+1))$ and then step up to $(L_n,-n)$. Thus $(j,-(n+1))\in\mc{B}_o$. For $L_n\le j\le u$ consider a path that steps either left or up. The first step up will be at a point $(j',-(n+1))$ with $L_n\le j'\le U_n$, so by induction the point $(j',-n)$ we reach will lie in $\mc{B}_o$. Thus $\mc{B}_o\cap \mc{L}_{n+1}$ forms a contiguous block with this scenario. 

Now suppose that $L_n<l\le U_n\le u$. Then the same argument shows that $L_{n+1}=l$ and $U_{n+1}=u$, and $(j,-(n+1))\in\mc{B}_o$ for $l\le j\le u$, provided the following conditions hold: 
\begin{align*}
\mc{G}_{(j,-(n+1))}&=\leftarrow\quad\text{ for $L_n\le j<l$}\\
\mc{G}_{(l,-(n+1))}&=\NE\\
\mc{G}_{(j,-(n+1))}&=\leftarrow\quad\text{ for $U_n<j\le u$}\\
\mc{G}_{(u+1,-(n+1))}&=\NE.
\end{align*}
Between them, the above scenarios cover all cases in which there is a $L_n\le j\le U_n$ with 
$\mc{G}_{(j,-(n+1))}=\NE$. So the only remaining possibility is that 
$$
\mc{G}_{(j,-(n+1))}=\leftarrow\quad\text{ for $L_n\le j\le U_n$,}
$$
in which case $\mc{B}_o\cap \mc{L}_{n+1}=\emptyset$, so $L_{n+1}=+\infty$, $U_{n+1}=-\infty$. This verifies (\ref{LUblock}).

Let $l'\le u'$. We can now also read off $\nu(L_{n+1}=l, U_{n+1}=u\mid L_n=l', U_n=u')$, getting 
\begin{align*}
(1-p)p^{l'-l+1}(1-p)^{u-u'}p=p^{l'-l+2}(1-p)^{u-u'+1}, &\quad\text{if $l\le l'\le u'\le u$, }\\
(1-p)^{l-l'}p(1-p)^{u-u'}p=p^{2}(1-p)^{l+u-l'-u'}, &\quad\text{if $l'< l\le u'\le u$, }\\
(1-p)^{u'-l'+1}&\quad\text{if $l=+\infty$, $u=-\infty$.}
\end{align*}

We now couple these to a pair of independent random walks $L_n^0$ and $U_n^0$ that evolve as follows: If $U_n^0=u'$ then $U_{n+1}^0=u'+j$ with probability $p(1-p)^j$, for $j\ge 0$. If $L_n^0=l'$ and $j\ge 1$ then $L_{n+1}^0=l'+j$ with probability $p(1-p)^j$. While if $L_n^0=l'$ and $j\ge 0$ then $L_{n+1}^0=l'-j$ with probability $p^{j+1}(1-p)$. It follows that for $l'\le u'$ we have
$$
\nu(L_{n+1}=l, U_{n+1}=u\mid L_n=l', U_n=u')
=\nu(L_{n+1}^0=l, U_{n+1}^0=u\mid L_n^0=l', U_n^0=u'),
$$
provided $l\le u'\le u$. Put another way, let $T$ be the first $n$ (if any) such that $\mc{L}_n\cap \mc{B}_o=\emptyset$, and $T^0$ be the first $n$ such that $L_n^0>U_{n-1}^0$. Then the process 
$(L_n,U_n)_{0\le n<T}$ has the same law as the process  $(L_n^0,U_n^0)_{0\le n<T^0}$.

Consider statement (a) of the proposition. $\mc{B}_o$ is infinite exactly when $T=\infty$, so we want to show that $\nu(T^0=\infty)>0$. But $T^0$ is the first time the random walk $D_n^0=L_n^0-U_{n-1}^0$ hits $[1,\infty)$. So this result boils down to showing that the random walk $D_n^0$ drifts to the left, or in other words, that 
$$
\mE[\Delta L_n^0]<\mE[\Delta U_n^0],
$$
where e.g.~$\Delta L_n^0= L_n^0- L_{n-1}^0$ for $n\in \N$.  In fact, 
$$
\mE[\Delta L_n^0]=\sum_{j\ge 1}j(1-p)^jp-\sum_{j\ge 0}jp^{j+1}(1-p)
=\frac{1-p}{p}-\frac{p^2}{1-p}
$$
and
$$
\mE[\Delta U_n^0]=\sum_{j\ge 0}j(1-p)^jp
=\frac{1-p}{p}.
$$
So by the above reasoning, $\theta_-\in (0,1)$. 

Statement (b) follows as in the proof of Proposition \ref{prp:infiniteB1}. To obtain (c) we use the law of large numbers, and the comparison with $L_n^0$ and $U_n^0$. This shows that whenever $\mc{B}_o$ is infinite, in fact $U_n/n\to \mE[\Delta U_n^0]=(1-p)/p$, and likewise $L_n/n\to (1-p)/p-p^2/(1-p)$. So suppose $\mc{B}_x$ and $\mc{B}_y$ are both infinite. Without loss of generality, we'll assume that $x^{[2]}=y^{[2]}$ and $x^{[1]}<y^{[1]}$. Let $L_n(y)$ (resp.~$U_n(x)$) be the lower (resp.~upper) process obtained from our construction, starting not from $o$ but from $x$ (resp. $y$). Since the asymptotic speed of $L_n(y)$ is less than the asymptotic speed of $U_n(x)$, eventually $U_n(x)>L_n(y)$, providing infinitely many common elements to $\mc{B}_x$ and $\mc{B}_y$. 
\qed
\medskip

\begin{COR}
\label{cor:infiniteB2}
$\theta_-\in (0,1)$ 
for the models $(\NE \SW)$, $(\WE \, \NS)$, and $(\SWE \uparrow)$ whenever $p \in (0,1)$.  
\end{COR}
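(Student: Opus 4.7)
The plan is to split the claim into the two inequalities $\theta_->0$ and $\theta_-<1$, and to handle each by a short coupling/blocking argument that reduces the three models to situations already understood.

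For the lower bound $\theta_->0$, the idea is that each of the three target models dominates one of the reference models $(\WE\,\uparrow)$ or $(\NE\,\leftarrow)$ from Propositions \ref{prp:infiniteB1} and \ref{prp:infiniteB2} in a pointwise sense. Specifically, I would construct a coupled environment $\mc{G}'$ from $\mc{G}$ by replacing the arrows at each site with a distinguished subset: for $(\WE\,\NS)$ send each $\NS$ site to $\{\uparrow\}$ and keep $\WE$ unchanged; for $(\SWE\,\uparrow)$ send each $\SWE$ site to $\WE$ and keep $\{\uparrow\}$ unchanged; and for $(\NE\,\SW)$ send each $\SW$ site to $\{\leftarrow\}$ and keep $\NE$ unchanged. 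In every case $\mc{G}'_x\subset \mc{G}_x$ for every $x\in\Z^2$, so every open path in $\mc{G}'$ is automatically open in $\mc{G}$, and in particular $\mc{B}_o(\mc{G})\supset \mc{B}_o(\mc{G}')$. Since $\mc{G}'$ has the same law as the corresponding reference model at the same parameter $p$, the relevant proposition yields $\nu(|\mc{B}_o(\mc{G}')|=\infty)>0$, hence $\theta_->0$ for $\mc{G}$.

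For the upper bound $\theta_-<1$ I would exhibit in each model a local configuration at the four neighbors of the origin that kills every arrow pointing at $o$; since any path terminating at $o$ takes its last step from one of these neighbors, such an event forces $\mc{B}_o=\{o\}$ and therefore rules out $|\mc{B}_o|=\infty$. The required configurations are immediate once one writes out the arrow sets: $\mc{G}_{o\pm e_1}=\NS$ together with $\mc{G}_{o\pm e_2}=\WE$ for $(\WE\,\NS)$; $\mc{G}_{o+e_1}=\mc{G}_{o+e_2}=\NE$ together with $\mc{G}_{o-e_1}=\mc{G}_{o-e_2}=\SW$ for $(\NE\,\SW)$; and $\mc{G}_{o\pm e_1}=\mc{G}_{o+e_2}=\{\uparrow\}$ together with $\mc{G}_{o-e_2}=\SWE$ for $(\SWE\,\uparrow)$. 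Each event has strictly positive probability for every $p\in(0,1)$.

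I do not expect any serious obstacle in executing this plan. The only point requiring mild attention is checking that the coupling maps really do produce a pointwise \emph{subset} of arrows (rather than some other arrow configuration) and that the marginal law of $\mc{G}'_x$ is exactly that of the reference model at the same parameter $p$; both are trivialities once the definitions of the three target models are unpacked. The corollary is therefore essentially an immediate consequence of Propositions \ref{prp:infiniteB1} and \ref{prp:infiniteB2}.
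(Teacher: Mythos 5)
Your proposal is correct and matches the paper's proof: the paper likewise observes that $(\WE\,\NS)$ and $(\SWE\,\uparrow)$ ``contain'' the reference model $(\WE\,\uparrow)$ while $(\NE\,\SW)$ contains $(\NE\,\leftarrow)$, then invokes Propositions~\ref{prp:infiniteB1} and \ref{prp:infiniteB2} for $\theta_->0$ and finds trapping configurations for $\theta_-<1$. You have simply written out the pointwise coupling and the explicit local configurations that the paper leaves implicit.
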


\proof
Models $(\WE\, \NS)$ and $(\SWE\,\uparrow)$ contain the model $(\WE\,\uparrow)$, while $(\NE \SW)$ contains the model $(\NE\,\leftarrow)$.  Thus $\theta_->0$ by Propositions \ref{prp:infiniteB1} and \ref{prp:infiniteB2}. In each case it is easy to find trapping configurations showing $\nu(\mc{B}_o=\{o\})>0$.\qed\medskip

In order to describe the possible structures of $\mc{B}_x$ clusters, we make the following definition.
\begin{DEF}
\label{def:blocked}
Given $w:\Z\ra \Z$, define $w_{\le}\subset \Z^2$ and $w_{>}\subset \Z^2$ by
$$
w_\le = \{y\in \Z^2: y^{[2]}\le w(y^{[1]})\} \quad \text{ and }\quad w_> = \{y\in \Z^2: y^{[2]}> w(y^{[1]})\}.
$$
We say that $y$ is {\sl below $w$} if $y^{[2]}\le w(y^{[1]})$, and {\sl strictly below $w$} if $y^{[2]}< w(y^{[1]})$. 
We say that $w$ is an {\sl upper blocking function (ubf)} for $\mc{G}$ if there is no open path in $\mc{G}$ from $w_>$ to $w_\le$.
If $x$ is below an upper blocking function, we say that $\mc{B}_x$ is {\sl blocked above}.
\end{DEF}
Defining $w_{\ge}$ and $w_<$ similarly, $w$ is a {\sl lower blocking function (lbf)} for $\mc{G}$ if there is no  path in $\mc{G}$ from $w_<$ to $w_\ge$, and if $x$ is above a lbf, then $\mc{B}_x$ is {\sl blocked below}.
The reason for the terminology is the following trivial result.
\begin{LEM}
\label{lem:blocked}
If $w$ is an upper blocking function, and $x$ is below $w$, then $\mc{B}_x\subset w_{\le}$. Likewise, if $w$ is a lower blocking function and $x$ is above $w$, then $\mc{B}_x\subset w_{\ge}$.
\end{LEM}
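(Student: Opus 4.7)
The proof I would give is a direct unpacking of definitions via contradiction, so it will be short. Suppose $w$ is an upper blocking function and $x$ lies below $w$, and assume for contradiction that there exists $y \in \mc{B}_x$ with $y \notin w_\le$. Then $y \in w_>$ by the definition of $w_>$ and $w_\le$ as a partition of $\Z^2$. Since $y \in \mc{B}_x$, Definition \ref{def:connections} gives a finite open path $y = x_0, x_1, \dots, x_n = x$ in $\mc{G}$. The starting point $x_0 = y$ lies in $w_>$ and the ending point $x_n = x$ lies in $w_\le$ by hypothesis, so this is precisely an open path from $w_>$ to $w_\le$, directly contradicting the definition of upper blocking function. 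Hence every $y \in \mc{B}_x$ must satisfy $y \in w_\le$, i.e.~$\mc{B}_x \subset w_\le$.

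The argument for the lower blocking function is identical with $w_>$ replaced by $w_<$ and $w_\le$ by $w_\ge$: any open path from a hypothetical $y \in \mc{B}_x \cap w_<$ to $x \in w_\ge$ would contradict the lbf property of $w$. There is no serious obstacle here; the lemma is essentially a restatement of the blocking definitions in terms of the $\mc{B}$-clusters, and the only thing to notice is that the path in the definition of $\mc{B}_x$ is oriented from $y$ to $x$, which matches the orientation implicit in the non-existence of an open path from $w_>$ to $w_\le$ (respectively from $w_<$ to $w_\ge$).
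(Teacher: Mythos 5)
Your proof is correct and is exactly the intended unpacking of definitions; the paper calls this a "trivial result" and omits the proof entirely, so there is nothing further to compare against.
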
 

For $C\subset \mc{E}$, let 
$\mc{A}_C=\{A\subset\mc{E}: A\cap C\ne \emptyset\}$.  
We use the shorthand notation $\mc{A}_{\smallNE}$ for $\mc{A}_{\{e_1, e_2\}}$, and $\mc{A}_{e}$ for $\mc{A}_{\{e\}}$.  We now reveal the possible forms of $\mc{B}_x$ for certain models (see e.g.~Figure \ref{fig:orthant_B_o}).
\begin{figure}
\begin{center}
\includegraphics[scale=.45]{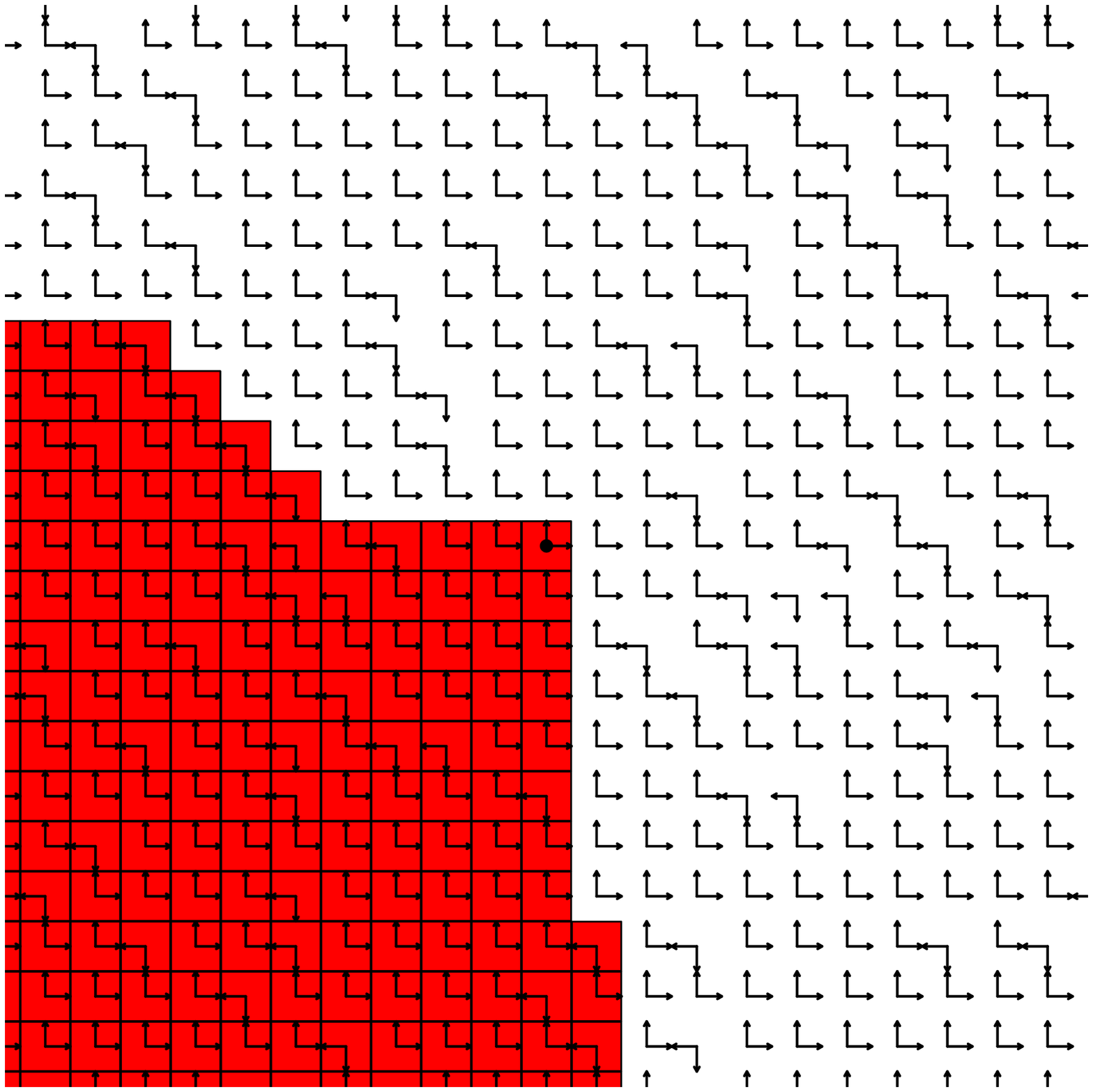}
\end{center}
\caption{Part of a realisation of the set $\mc{B}_o$ for the model with $\mu(\{\uparrow,\rightarrow\})=.8=1-\mu(\{\leftarrow,\downarrow\})$. Centred at $o$.}
\label{fig:orthant_B_o}
\end{figure}
\begin{PRP}
\label{PRP:trichotomy}
Fix $d=2$.  Suppose that $\mu(\mc{A}_{e})>0$ for each $e\in \mc{E}$, and $\mu(\mc{A}_{\smallNW})=\mu(\mc{A}_{\smallSE})=1$. 
Then 
\begin{enumerate}
\item 
$\nu$-a.s.~one of the following occurs:
  \begin{enumerate}
  \item[(i)] $\mc{B}_x$ is finite;
  \item[(ii)] $\mc{B}_x=\Z^2$;
  \item[(iii)] there exists a decreasing ubf $\,W:\Z\to\Z$ such that $\mc{B}_x=W_{\le}$;
  \item[(iv)] there exists a decreasing lbf $\,W:\Z\to\Z$ such that $\mc{B}_x=W_{\ge}$.
  \end{enumerate}
\item At most one of (ii), (iii), (iv) can have positive probability. 
\item $\nu$-a.s.~if $|\mc{B}_x|=|\mc{B}_y|=\infty$ then $|\mc{B}_x\cap\mc{B}_y|=\infty$. 
\end{enumerate}
\end{PRP}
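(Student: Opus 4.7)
The key tool, under the hypotheses $\mu(\mc{A}_{\smallNW})=\mu(\mc{A}_{\smallSE})=1$, is a bi-infinite monotone staircase attached to each site. Starting from any $y$, a deterministic choice of an available $\uparrow$ or $\leftarrow$ at each step yields an infinite NW-monotone open path $\pi^{NW}(y)$ from $y$ to $(-\infty,+\infty)$; symmetrically one builds an SE-monotone open path $\pi^{SE}(y)$ to $(+\infty,-\infty)$. Their union at $y$ forms a bi-infinite monotone staircase $\Gamma_y\subset\mc{C}_y$. If $y\in\mc{B}_x^c$ then transitivity of $\to$ forces $\mc{C}_y\subset\mc{B}_x^c$, hence $\Gamma_y\subset\mc{B}_x^c$. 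Because $\Gamma_y$ hits every column and every row, $\Z^2\setminus\Gamma_y$ splits into a NE and an SW component, and every nearest-neighbor path between them uses a site of $\Gamma_y$. Since all edges of $\mc{G}$ are nearest-neighbor and $\Gamma_y$ is disjoint from $\mc{B}_x$, every point of $\mc{B}_x$ lies in the component containing $x$.

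For part (a), assume $\mc{B}_x$ is infinite with $\mc{B}_x\neq\Z^2$ and pick any $y_0\in\mc{B}_x^c$. WLOG $x$ (and so all of $\mc{B}_x$) lies SW of $\Gamma_{y_0}$; the NE alternative gives (iv) by symmetry. Then $W(i):=\sup\{j:(i,j)\in\mc{B}_x\}$ is finite for each $i$. The three structural claims---that $W$ is weakly decreasing, that $W$ is a ubf, and the fullness $\mc{B}_x=W_\le$---would each be established by the same staircase-separation argument: if a claim fails, locate some $y'\in\mc{B}_x^c$ whose staircase $\Gamma_{y'}\subset\mc{B}_x^c$ places a witness of $\mc{B}_x$ on one side and $x$ on the other, contradicting that $\mc{B}_x$ is confined to a single side of $\Gamma_{y'}$. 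For example, if $(i,j)\in\mc{B}_x^c$ with $j\le W(i)$ (so fullness fails), then $(i,W(i))\in\mc{B}_x$ sits above and hence NE of $\Gamma_{(i,j)}$ in column $i$, while tracing the SE-arm of $\Gamma_{(i,j)}$ against the reference staircase $\Gamma_{y_0}$ pins $x$ on the SW side.

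For part (b), if two of (ii)--(iv) had positive probability, translation invariance and ergodicity of $\nu$ would a.s.\ produce sites $x,y$ realising incompatible cases and arbitrarily positioned. Their bounding staircases force a common point $z\in\mc{B}_x\cap\mc{B}_y$ on whose own set $\mc{B}_z$ both cases impose incompatible bounding structure, violating the trichotomy of (a) applied to $z$. For part (c), (b) places $\mc{B}_x$ and $\mc{B}_y$ in the same case. If both are in case (iii), then $W:=\min(W^x,W^y)$ is a decreasing ubf and fullness gives $\mc{B}_x\cap\mc{B}_y=W_\le$, which is infinite since each $W^x,W^y$ is the envelope of an infinite cluster (hence unbounded as $i\to-\infty$) and the pointwise minimum inherits that unboundedness. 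Case (iv) is symmetric and case (ii) is immediate.

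The main obstacle is the fullness $\mc{B}_x=W_\le$ in part (a): showing that the separating staircase $\Gamma_{(i,j)}$ through a hypothetical hole really does place $x$ on its opposite side requires a careful geometric comparison with the reference $\Gamma_{y_0}$, and this is where most of the technical work lies. A secondary obstacle is turning the informal ``incompatible cases'' sketch of (b) into a rigorous ergodic-coupling statement.
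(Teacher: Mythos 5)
The core idea of separating the plane with a monotone NW--SE staircase through a point of $\mc{B}_x^c$ is close in spirit to the paper's argument, but there is a genuine gap in your construction of the staircase. You claim that a \emph{deterministic} choice of an available NW-direction at each step yields a path whose first coordinate $\to-\infty$ and second $\to+\infty$, and hence that $\Gamma_y$ ``hits every column and every row.'' This is not true under the stated hypotheses. For example, if $\mu(\mc{A}_{e_2})=1$ (i.e.~every vertex has $\uparrow$ available --- perfectly consistent with $\mu(\mc{A}_{\smallNW})=\mu(\mc{A}_{\smallSE})=1$ and $\mu(\mc{A}_e)>0$ for each $e$), then a deterministic rule that prefers $\uparrow$ makes the NW arm of $\Gamma_y$ a vertical ray, and $\Gamma_y$ never leaves the column of $y$. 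Then the ``NE/SW separation'' fails and the whole scheme collapses. The paper avoids this by choosing among available NW (resp.~SE) moves \emph{at random}, so that the constructed path follows a genuine $(\uparrow\leftarrow)$ (resp.~$(\downarrow\rightarrow)$) coalescing-walk subnetwork. It then invokes (a rotation of) Lemma \ref{lem:coalescence1} to coalesce the paths from two points $y,z\in\mc{B}_o^c$ with the same first coordinate, obtaining a closed cycle in $\mc{B}_o^c$ which encloses $o$ whenever some $w\in\mc{B}_o$ sits between $y$ and $z$; the hypothesis $\mu(\mc{A}_e)>0$ for each $e$ is what makes these randomised walks a.s.~visit every row and column. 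So the missing ingredient in your proposal is precisely the stochastic coalescence argument --- you need to build the staircase randomly and prove it spans the plane a.s., not assume it.

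There is a second gap in part (b). Saying that ``translation invariance and ergodicity would produce sites realising incompatible cases'' is not an argument. The paper's proof is concrete: if $\nu(\text{$\mc{B}_o$ blocked above})\ge\delta>0$, it passes from ``there is a ubf above $o$'' to ``for every $n$ there is a ubf above $[-n,n]^2$'' via translation invariance and monotonicity of events, recognises the latter as a tail event, applies the zero-one law to upgrade $\delta/2$ to $1$, and concludes $\nu(\text{$\mc{B}_y$ finite or blocked above})=1$ for all $y$; the analogous statements for blocked below and for $\mc{B}_y=\Z^2$ then show at most one of (ii)--(iv) can carry mass. You acknowledge that both the fullness step in (a) and the coupling step in (b) are left as sketches --- these are exactly the places where your proposal, as written, does not yet constitute a proof.
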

\begin{proof} 
Without loss of generality, $x=o$. 
Suppose $y,z\notin\mc{B}_o$, with $y^{[1]}=z^{[1]}$ and $y^{[2]}<z^{[2]}$. Since $\mu(\mc{A}_{\smallSE})=1$, we may find SE paths from both $y$ and $z$ that are consistent with the environment, but can be chosen to arise from a model $\tilde\mu$ depicted by $(\downarrow \, \rightarrow)$.  
This can be achieved by choosing $\downarrow$ and $\rightarrow$ independently at random (using the same probability $q$) at any vertex where both occur.
We may now apply (a rotation of) Lemma \ref{lem:coalescence1} to see that the SE path from $y$ crosses the SE path from $z$ with probability 1. Since these paths lie within $\mc{C}_y$ and $\mc{C}_z$ respectively, and both $y,z\in\mc{B}_o^c$, it follows that both paths lie entirely in $\mc{B}_o^c$ as well. Following one from $y$ to the intersection point, and then the other backwards in time to $z$ produces a simple polygonal path from $y$ to $z$, all of whose vertices belong to $\mc{B}_o^c$.  Similarly we may also find intersecting NW paths from $y$ and $z$ that use only the moves $\uparrow$ and $\leftarrow$. Following one path from $z$ to the intersection point, and then the other back to $y$ produces a simple polygonal path which also lies entirely within $\mc{B}_o^c$. Concatenating the two paths gives us a cycle in $\mc{B}_o^c$ whose vertices lead from $y$ to $z$ and then back to $y$. 

Now suppose that $w\in\mc{B}_o$, with $w^{[1]}=y^{[1]}=z^{[1]}$ but $y^{[2]}<w^{[2]}<z^{[2]}$.  There is by definition an open path from $w$ to $o$ which lies entirely in $\mc{B}_o$. This path cannot cross the above cycle, from which we conclude that $o$ is itself enclosed by the cycle. It follows that $\mc{B}_o$ is also enclosed by the cycle, and hence that $\mc{B}_o$ is finite. That is, in this scenario, condition (i) holds. 

To put this a different way, suppose that $\mc{B}_o$ is infinite.
The argument above establishes that for every $n\in\Z$, the set of points $y\in\mc{B}_o^c$ such that $y^{[1]}=n$ forms a vertical interval $\{n\}\times (L_n,U_n)$. Case (ii) above corresponds to this interval being empty for every $n$. So suppose further that the interval is non-empty for some $n$. Then constructing SE and NW paths from that point (as above) shows that the interval is in fact non-empty for every $n$. Even better, running the SE path backwards in time and the NW path forwards in time gives a simple polygonal path within $\mc{B}_o^c$ that crosses every vertical line in $\Z^2$. 

If $o$ lies below this path, then we must have $U_n=+\infty$ for every $n$, as any path to $o$ from above our path would have to cross the latter. We will show that case  (iii) holds with $W(n)=L_n$.  With this choice of $W$ we first show that $W(n)>-\infty$ for each $n$.
Since $\mc{B}_o$ is assumed to be infinite, we have $W(n)>-\infty$ for some $n$. Suppose $W(m)=-\infty$ for some $m>n$. Choosing the smallest such $m$, we have $W(m-1)>-\infty$, and that $(m-1,k)\in\mc{B}_o$ for every $k\le W(m-1)$, but that $(m,k)\notin\mc{B}_o$ for any $k$. In particular, there is no $\leftarrow$ in any $\mc{G}_{(m,k)}$, $k\le W(m-1)$, since if there were, following that move from $(m,k)$ would lead into $\mc{B}_o$, from which we could then reach $o$. This contradicts the assumption that $\mu(\mc{A}_{-e_1})>0$, since the latter easily implies that 
$$
\nu(\text{$\exists j_0,k_0$ such that $\mc{G}_{(j_0,k)}\notin\mc{A}_{-e_1}$ for every $k\le k_0$})=0.
$$
A similar argument, using that $\mu(\mc{A}_{e_1})>0$, rules out $W(m)=-\infty$ for $m<n$. It follows that $W(m)>-\infty$ for every $m$. In other words, $W:\Z\to\Z$. Therefore $\mc{B}_o=W_{\le}$, and since no open path can run from $\mc{B}_o^c$ to $\mc{B}_o$, it follows that W is a ubf.

To see that $W$ is decreasing, consider  $\mc{G}_{(n,W(n)+1)}$. Since $W(n)+1>L_n$, we must have $\downarrow\notin \mc{G}_{(n,W(n)+1)}$. Since $\mu(\mc{A}_{\smallSE})=1$, it follows  that 
$\rightarrow\in \mc{G}_{(n,W(n)+1)}$. Therefore $(n+1,W(n)+1)\notin \mc{B}_o$, so $W(n+1)\le W(n)$, which establishes (iii).


If $o$ lies above the constructed path, then the same argument shows that $-\infty=L_n<U_n<+\infty$ for each $n$, with $U_n$ decreasing, which puts us in case  (iv) with $W(n)=U_n$. This establishes  (a).


To prove  (b), suppose that 
$\nu(\text{$\mc{B}_o$ is blocked above})\ge\delta>0$. So 
$$
\nu(\text{$\exists$ an upper blocking function above $o$})\ge \delta.
$$ 
Choose $n\ge 1$. We may find a $k\ge 0$ such that 
$$
\nu(\text{$\exists$ an upper blocking function $w$ above $o$, such that $w(j)\ge -k$ for all $|j|\le n$})\ge\delta/2.
$$
By translation invariance of $\nu$, it follows that 
$$
\nu(\text{$\exists$ an upper blocking function above $[-n,n]^2$})
\ge\delta/2
$$
(just translate $\mc{G}$ upward by $k+n$). 
These are decreasing events, so in fact
$$
\nu(\text{$\forall n\ge 1, \exists$ an upper blocking function above $[-n,n]^2$})\ge\delta/2.
$$
But the latter is a tail event, 
so by the zero-one law, the probability is actually equal to 1.
We conclude that $\nu(\text{$\mc{B}_y$ is finite or blocked above})=1$ for every $y$.  Likewise, if 
$\nu(\text{$\mc{B}_o$ is blocked below})>0$, it follows that $\nu(\text{$\mc{B}_y$ is finite or blocked below})=1$  for every $y$.

If $\nu(\mc{B}_o=\Z^2)>0$ then $\nu(\text{$\exists$ upper blocking function above $o$})<1$. By translation invariance and what we have just shown, it follows that $\nu(\text{$\exists$ upper blocking function above $y$})=0$ for every $y$. Thus $\nu(\text{$\mc{B}_y$ is blocked above})=0$. Likewise $\nu(\text{$\mc{B}_y$ is blocked below})=0$. Therefore, $\nu(\text{$\mc{B}_y$ is finite or $=\Z^2$})=1$ for every $y$. 

Finally if $|\mc{B}_x|=\infty=|\mc{B}_y|$ then by (a) and (b) one of (ii)-(iv) holds for both $x$ and $y$ and (c) follows in each case.
\end{proof}
\begin{COR}
\label{COR:B_o_NS_EW}
For the model $(\updownarrow \,\leftrightarrow)$ with $p\in (0,1)$, we have that $\nu\big(\mc{B}_o=\Z^2\text{ }\big| \text{ }|\mc{B}_o|=\infty\big)=1$.
\end{COR}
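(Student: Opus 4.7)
My plan is to apply the trichotomy structure theorem (Proposition \ref{PRP:trichotomy}) to the model $(\updownarrow\,\leftrightarrow)$ and then close off the non-trivial cases using a symmetry argument together with the uniqueness clause of the trichotomy. First I would check the hypotheses of Proposition \ref{PRP:trichotomy}. Since $\mu(\{\uparrow,\downarrow\})=p\in(0,1)$ and $\mu(\{\leftarrow,\rightarrow\})=1-p\in(0,1)$, each unit vector $e\in\mc{E}$ satisfies $\mu(\mc{A}_e)>0$. Moreover every $A$ in the support of $\mu$ contains one of $\uparrow,\leftarrow$ (so $\mu(\mc{A}_{\smallNW})=1$) and one of $\downarrow,\rightarrow$ (so $\mu(\mc{A}_{\smallSE})=1$). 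Hence Proposition \ref{PRP:trichotomy}(a) gives that $\nu$-a.s. $\mc{B}_o$ falls into exactly one of the four cases (i)–(iv).

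Next I would exploit the reflection symmetry of $\mu$. The map $\phi(x^{[1]},x^{[2]})=(x^{[1]},-x^{[2]})$ fixes both $\{\uparrow,\downarrow\}$ and $\{\leftarrow,\rightarrow\}$ setwise, so the induced environment map $\tilde{\mc{G}}_x=\phi(\mc{G}_{\phi(x)})$ preserves the law $\nu$. Since $\phi$ interchanges the notions of ``below'' and ``above,'' $\tilde{\mc{G}}$ lies in case (iii) precisely when $\mc{G}$ lies in case (iv) (with the associated decreasing ubf/lbf being the reflection of the other). Consequently
\[
\nu(\text{case (iii) holds for }\mc{B}_o)=\nu(\text{case (iv) holds for }\mc{B}_o).
\]

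Finally I would invoke part (b) of Proposition \ref{PRP:trichotomy}: at most one of (ii), (iii), (iv) can have positive $\nu$-probability. Combined with the equality above, cases (iii) and (iv) must both have probability zero. Thus on the event $\{|\mc{B}_o|=\infty\}$, which has positive probability by Corollary \ref{cor:infiniteB2} so that the conditional probability is well defined, only case (ii) remains, giving $\mc{B}_o=\Z^2$ almost surely. The whole argument rests on Proposition \ref{PRP:trichotomy}, so I do not anticipate a real obstacle beyond carefully verifying its hypotheses and setting up the symmetry correctly.
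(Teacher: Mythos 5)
Your proposal is correct and is essentially the same argument the paper gives: verify the hypotheses of Proposition \ref{PRP:trichotomy}, note that the vertical reflection symmetry of $\mu$ forces cases (iii) and (iv) to have equal probability, conclude from part (b) that both are zero, and invoke Corollary \ref{cor:infiniteB2} for the well-definedness of the conditioning. You simply spell out the symmetry step (the map $\phi$ and its action on the environment) more explicitly than the paper's one-line "by symmetry."
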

\proof By Corollary \ref{cor:infiniteB2} the conditional probability is well defined.  By symmetry, the events  (iii) and (iv) in Proposition \ref{PRP:trichotomy}(a) have equal probability, which by (b) must equal 0.  
\medskip

In the following, note the differences with Proposition \ref{PRP:trichotomy}; we have fewer possible cases, but on the other hand the path $W(n)$ need not be decreasing (see e.g.~Figure \ref{fig:WSE_N_B_o}).
\begin{figure}
\begin{center}
\vspace{-3cm}
\includegraphics[scale=.7]{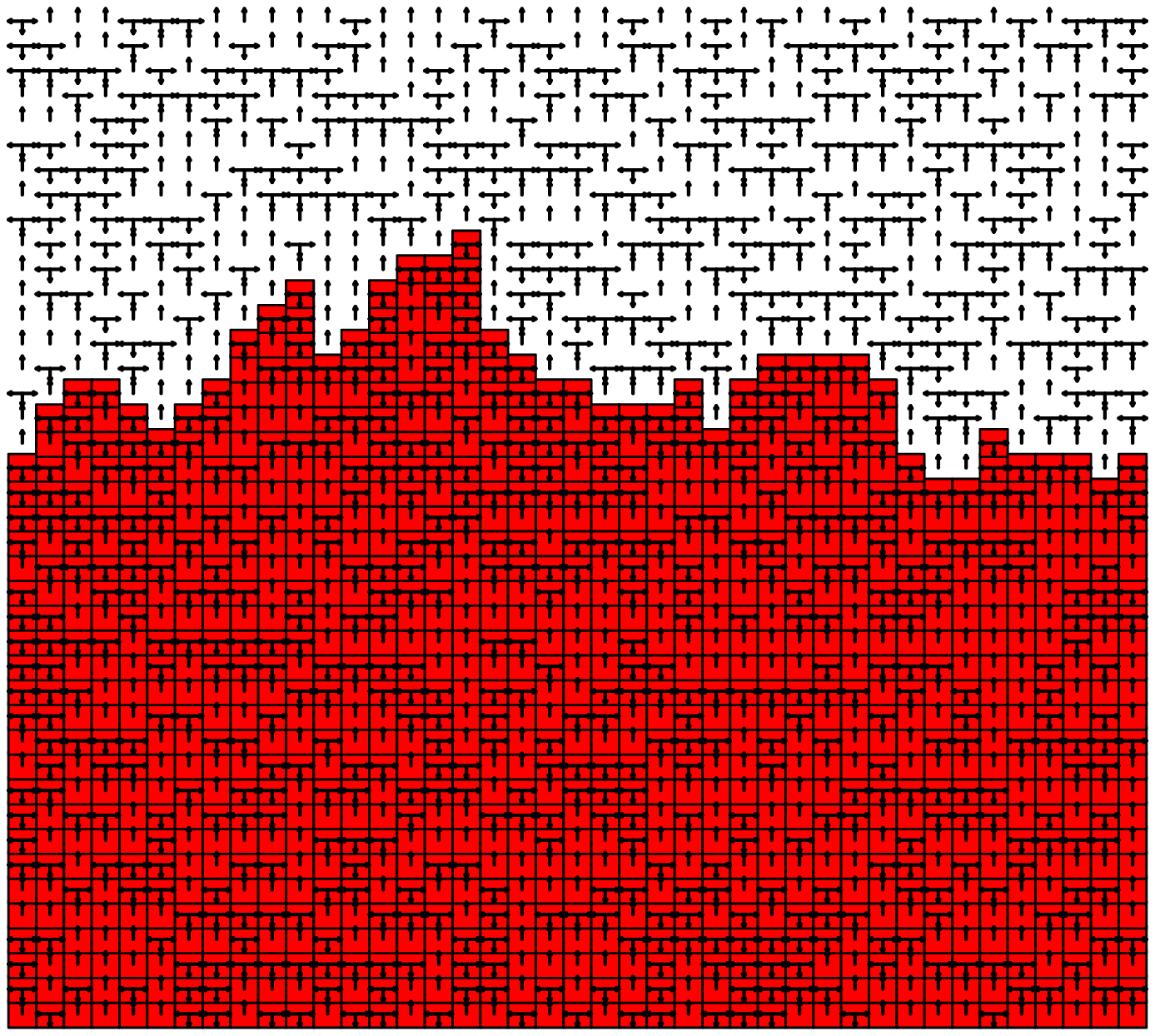}
\end{center}
\vspace{-2cm}
\caption{Part of a realisation of the set $\mc{B}_o$ for the model with $\mu(\{\downarrow,\leftarrow,\rightarrow\})=.5=\mu(\uparrow)$. Centred at $o$.}
\label{fig:WSE_N_B_o}
\end{figure}

\begin{COR}
\label{COR:trichotomy}
Fix $d=2$.  Suppose that
$\mu(\mc{A}_{\smallNE})=\mu(\mc{A}_{\smallNW})=1$, and $\mu(\mc{A}_{e})>0$ for $e\ne -e_2$. 
Then 
\begin{enumerate}
\item 
$\nu$-a.s.~one of the following occurs:\smallskip
  \begin{enumerate}
  \item[(i)] $\mc{B}_x$ is finite;
  \item[(ii)] $\mc{B}_x=\Z^2$;
  \item[(iii)] There exists a ubf $W:\Z\to\Z$ such that $\mc{B}_x=W_{\le}$.
  \end{enumerate}
\item At most one of (ii), (iii) can have positive probability. 
\item $\nu$-a.s., if $|\mc{B}_x|=|\mc{B}_y|=\infty$ then $|\mc{B}_x\cap\mc{B}_y|=\infty$. 
\end{enumerate}
\end{COR}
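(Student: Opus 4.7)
The argument follows the template of Proposition \ref{PRP:trichotomy}, substituting NE and NW coalescing walks for the SE and NW walks used there. Under $\mu(\mc{A}_{\smallNE}) = \mu(\mc{A}_{\smallNW}) = 1$ we can build, from any $y \in \Z^2$, a NE walk $\pi_1$ (using $\uparrow$ or $\rightarrow$, with independent fair coin tie-breaks) and a NW walk $\pi_2$ (using $\uparrow$ or $\leftarrow$); Lemma \ref{lem:coalescence1} and its reflection ensure that two NE (resp.~NW) walks coalesce $\nu$-a.s. Since both families emanate upward from their base point, the ``barrier'' built from a single $y$ only caps $\mc{B}_o$ from above --- this is why case (iv) of Proposition \ref{PRP:trichotomy} is absent and the function $W$ in (iii) need not be monotone.

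For part (a), the key structural claim is that whenever $y \in \mc{B}_o^c$, the entire upward wedge bounded by the NE and NW walks from $y$ lies in $\mc{B}_o^c$; in particular $(y^{[1]}, k) \in \mc{B}_o^c$ for every $k > y^{[2]}$. Indeed, $\pi_1$ and $\pi_2$ lie in $\mc{C}_y \subset \mc{B}_o^c$, so the bi-infinite lattice path $\pi_y := \pi_1 \cup \pi_2$ separates $\Z^2 \setminus \pi_y$ into an upper component $U_y$ and a lower component $L_y$. When $y^{[2]} > o^{[2]} = 0$, every vertex of $\pi_y$ has height at least $y^{[2]} > 0$, so $o \in L_y$; any putative $w \in U_y \cap \mc{B}_o$ would then force a directed path from $w$ to $o$ to cross $\pi_y \subset \mc{B}_o^c$, which is impossible. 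For $y$ with $y^{[2]} \le 0$ the separation may a priori fail since $o$ could lie inside $U_y$; this case is handled by promoting $y$ along its NE walk (which rises to arbitrary height $\nu$-a.s., since $\mu(\mc{A}_{e_2}) > 0$ forces $\uparrow$ to occur infinitely often along the walk by Borel--Cantelli) to a vertex of $\mc{B}_o^c$ above $o$'s row, then pushing the bad ``column gap'' event into a translation-invariant tail event whose probability is forced to zero by the zero--one law argument of Proposition \ref{PRP:trichotomy}.

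From the structural claim, each column intersection $\mc{B}_o \cap (\{n\} \times \Z)$ is either $\{n\} \times \Z$, a downward ray $\{n\} \times (-\infty, W(n)]$, or empty. An additional cycle argument excludes mixing full and bounded columns: if some column has $W(n) < \infty$ while a neighboring column lies entirely in $\mc{B}_o$, the NE or NW walk started at $(n, W(n)+1) \in \mc{B}_o^c$ eventually enters the full column (since $\mu(\mc{A}_{e_1}) > 0$ and $\mu(\mc{A}_{-e_1})>0$ preclude the walk staying in column $n$ forever) and lands on a $\mc{B}_o$-vertex, contradiction. So $\nu$-a.s.\ we are in case (i), (ii), or (iii). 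Part (b) follows by copying the translation-invariance and tail zero--one law argument of Proposition \ref{PRP:trichotomy}(b) verbatim, applied to upper blocking functions above $[-n,n]^2$. Part (c) is immediate from (a) and (b): in case (ii) both sets equal $\Z^2$; in case (iii), $\mc{B}_x \cap \mc{B}_y$ contains $\{z : z^{[2]} \le \min(W_x(z^{[1]}), W_y(z^{[1]}))\}$, which is infinite; mixed cases are similar.

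The principal obstacle is the low-$y$ portion of (a), where the Jordan-type separation argument is genuinely weaker because $o$ could sit inside $U_y$. The resolution uses the NE-walk lift together with the zero--one law exactly as in Proposition \ref{PRP:trichotomy}, but care is needed to check that the lifted wedges collectively dominate every column above every $y \in \mc{B}_o^c$ --- this is what drives the asymmetric form of case (iii).
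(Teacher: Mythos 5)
Your overall strategy---running NE and NW coalescing walks in place of the SE and NW walks of Proposition~\ref{PRP:trichotomy}, and using the resulting barrier/cycle arguments---is the right template, and your treatment of parts~(b) and~(c) is sound. The difficulty lies in your ``key structural claim,'' and specifically in the case $y^{[2]}\le 0$, which is exactly where the content of the corollary (absence of case~(iv)) resides.

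First, note that the claim as you state it (``whenever $y\in\mc{B}_o^c$, the column above $y$ lies in $\mc{B}_o^c$'') is simply false when $\mc{B}_o$ is finite: take $y$ far below $o$ with $y\in\mc{B}_o^c$; then the finitely many points of $\mc{B}_o$ in that column violate the claim. So the claim must be conditioned on $\mc{B}_o$ infinite, and your deduction ``from the structural claim, each column intersection of $\mc{B}_o$ is a downward ray, full, or empty'' silently drops case~(i). More seriously, your proof of the claim for $y^{[2]}\le 0$ does not close. You propose to ``promote $y$ along its NE walk'' to a point $y'$ with $y'^{[2]}>0$, but the barrier built at $y'$ controls the wedge above $y'$, not the column above $y$ (which lies to the left of $y'$ and below the NW arm of $y'$'s barrier). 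The subsequent appeal to ``the zero--one law argument of Proposition~\ref{PRP:trichotomy}'' cannot rescue this: that argument shows that at most one of the events ``$\mc{B}_o=\Z^2$,'' ``blocked above,'' ``blocked below'' has positive probability, but it does not by itself kill the ``blocked below'' alternative---in Proposition~\ref{PRP:trichotomy} that alternative genuinely survives as case~(iv).

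What is missing is the paper's short geometric trick that rules out ``blocked below'' under $\mu(\mc{A}_{\smallNE})=\mu(\mc{A}_{\smallNW})=1$. In your framing it would run as follows: if $\mc{B}_o$ is infinite, $y\in\mc{B}_o^c$ with $y^{[2]}\le 0$, and $(y^{[1]},k_0)\in\mc{B}_o$ for some $k_0>y^{[2]}$, follow the NE walk from $y$ (which a.s. reaches arbitrary heights since $\mu(\mc{A}_{e_2})>0$) to a point $y'$ with $y'^{[2]}>k_0$, then follow the NW walk from $y'$ (which a.s. moves arbitrarily far left since $\mu(\mc{A}_{-e_1})>0$) back to a point $\tilde y$ in column $y^{[1]}$. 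Since $\tilde y\in\mc{C}_y\subset\mc{B}_o^c$ and $\tilde y^{[2]}\ge y'^{[2]}>k_0$, the pair $y,\tilde y\in\mc{B}_o^c$ now sandwich the $\mc{B}_o$-point $(y^{[1]},k_0)$ in the same column, and the NE/NW cycle argument forces $\mc{B}_o$ to be finite---a contradiction. Equivalently, one can argue as the paper does: replicate Proposition~\ref{PRP:trichotomy} verbatim with NE replacing SE to obtain alternatives (i)--(iv), and then rule out~(iv) by noting that from any $y$ below $x$ in $\mc{B}_x^c$, one may NE-walk to a height $>x^{[2]}$, then NW-walk back to column $x^{[1]}$, landing on a point which case~(iv) would force into $\mc{B}_x$, contradicting $y\notin\mc{B}_x$. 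Without this step your argument does not establish part~(a).

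Two smaller remarks: your cycle argument for NE walks does apply here (two NE walks from points in the same column coalesce, by Lemma~\ref{lem:coalescence1}), but you should note explicitly that $W$ need not be monotone in~(iii), since the argument used in Proposition~\ref{PRP:trichotomy} to show $W$ is decreasing relies on $\mu(\mc{A}_{\smallSE})=1$, which is not assumed here. Also, the possibility that some column lies entirely in $\mc{B}_o^c$ needs to be excluded or absorbed into case~(i), which your ``full vs.\ bounded column'' discussion does not address.
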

\proof
Simply replicate the proof of Proposition \ref{PRP:trichotomy}, using NE paths in place of SE paths. Everything goes through without change, except the property that $W(n)$ is decreasing. 

To see that the analogue of case (iv) of Proposition \ref{PRP:trichotomy} will not occur, suppose $\mc{B}_x$ is infinite and blocked below. Choose $y\in\mc{B}_x^c$ with $y^{[1]}=x^{[1]}$ and $y^{[2]}<x^{[2]}$. Follow the NE path from $y$ till it reaches some point $z$ with $z^{[2]}>x^{[2]}$. Then follow the NW path from $z$ till it reaches some point $\tilde z$ with $\tilde z^{[1]}=x^{[1]}$. By construction, $\tilde z^{[2]}>x^{[2]}$ so by  (iv) we must have $\tilde z\in\mc{B}_x$. But $y\to z\to\tilde z\to x$ implies that $y\to x$, which contradicts the fact that $y\notin\mc{B}_x$. Thus (iv) is impossible in this setting.
\qed

Additional assumptions may allow us to further restrict the possibilities. 
A trivial result of this type is:
\begin{COR}
\label{COR:trichotomy3}
In addition to the hypotheses of Corollary \ref{COR:trichotomy}, assume that $\mu(\mc{A}_{-e_2})=0$. Then for each $x$, $\nu$-a.s. either $\mc{B}_x$ is finite or it is blocked above. 
\end{COR}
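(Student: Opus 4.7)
The plan is to invoke the trichotomy from Corollary \ref{COR:trichotomy}(a) and rule out case (ii) (the possibility $\mc{B}_x=\Z^2$) using the extra hypothesis. First I would unpack $\mu(\mc{A}_{-e_2})=0$: since $\nu$ is a product measure on $\mc{P}^{\Z^d}$, this says that $\nu$-a.s.~no vertex $y\in\Z^2$ has $-e_2\in\mc{G}_y$. It follows from the definition of open path that along any open path $x_0,x_1,\dots,x_n$ one has $x_{i+1}-x_i\in\mc{G}_{x_i}\subset\mc{E}\setminus\{-e_2\}$, and hence $x_{i+1}^{[2]}\ge x_i^{[2]}$; i.e.~the second coordinate is monotonically non-decreasing along every open path.

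Consequently, if $y\to x$ then $y^{[2]}\le x^{[2]}$, so
\[
\mc{B}_x\subset\{z\in\Z^2: z^{[2]}\le x^{[2]}\}\subsetneq\Z^2,
\]
which rules out case (ii) of Corollary \ref{COR:trichotomy}(a). Only cases (i) and (iii) remain, which correspond precisely to the two alternatives in the conclusion (in case (iii) we have $\mc{B}_x=W_{\le}$ for some ubf $W$, so $\mc{B}_x$ is blocked above by Definition \ref{def:blocked}). Alternatively, bypassing the trichotomy altogether, one can simply take $W\equiv x^{[2]}$: the same monotonicity observation shows there is no open path from $W_{>}$ to $W_{\le}$, so this constant $W$ is a ubf, and $x$ lies (weakly) below it, giving that $\mc{B}_x$ is blocked above unconditionally.

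There is no real obstacle here; the corollary is an immediate sanity check on the structure theorem, and the only mild point is confirming that global non-occurrence of the arrow $-e_2$ literally forbids any open path from ever strictly decreasing the second coordinate, which is built into the definition of an open path.
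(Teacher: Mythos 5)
Your proof is correct, and since the paper states Corollary \ref{COR:trichotomy3} as ``a trivial result of this type'' without giving a proof, there is no official argument to compare against — but yours is exactly what the authors evidently had in mind. The one hypothesis to unpack is, as you do, that $\mu(\mc{A}_{-e_2})=0$ plus countability of $\Z^2$ forces, $\nu$-a.s., that no site in the lattice carries the arrow $-e_2$, so the second coordinate is non-decreasing along every open path. Of your two variants, the second is the cleaner: the constant function $W\equiv x^{[2]}$ is itself an upper blocking function (no path can cross it from above to below without a downward step), and $x$ lies weakly below it, so by Definition \ref{def:blocked} \emph{every} $\mc{B}_x$ is blocked above a.s. That is formally a bit stronger than the stated disjunction (since ``blocked above'' imposes no infiniteness requirement) and does not even need Corollary \ref{COR:trichotomy}, though the first variant — ruling out case (ii) of the trichotomy because $\mc{B}_x\subset\{z:z^{[2]}\le x^{[2]}\}\subsetneq\Z^2$ — is the reading suggested by the way the corollary is framed. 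Either way, no gap.
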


In the remainder of this section, we explore some consequences of the above results for several models of particular interest.

Recall that in the triangular lattice, each vertex 6 neighbours.  
To construct oriented triangular site percolation (a model which we denote $\OTSP$), we declare each vertex in $\Z^2$ to be open with probability $p$, independently of each other vertex.
Closed vertices connect to no neighbours. Open vertices $x$ connect to 3 neighbours, $x-e_1$, $x+e_2$, and $x-e_1+e_2$.
There is a critical value $p_c^{\smallOTSP}$ such that oriented percolation clusters are finite when $p<p_c^{\smallOTSP}$ and are infinite with positive probability when $p>p_c^{\smallOTSP}$. 
The estimated value is $p_c^{\smallOTSP}\approx 0.5956$ (see De Bell and Essam \cite{DeBE} or Jensen and Guttmann \cite{JG}). 
We have not found rigorous bounds in the literature, though the following inequalities
\begin{equation}
\label{OTSPbounds}
0.5\le p_c^{\smallOTSP}\le 0.7491
\end{equation}
can be inferred from bounds on other models. To be precise, 
if we decrease the allowed bonds we get $p_c^{\smallOTSP}\le p_c^{\smallNE}\le 0.7491$ (the latter due to Balister et al \cite{BBS}). 
Similarly, if $p_c^{\text{TSP}}$ denotes the critical threshold for (un-oriented) triangular site percolation, then 
$p_c^{\smallOTSP}\ge p_c^{\text{TSP}}=1/2$ (see Hughes \cite{Hughes}). 
We will improve on the lower bound in Theorem \ref{prp:infiniteM3}, where we show that $p_c^{\smallOTSP}\ge0.5466$.  
\begin{THM}
\label{thm:infiniteBorthant}
For the model $(\NE \SW)$ with $p\in (0,1)$, we have $\theta_-\in (0,1)$, and $\nu$-a.s. 
\begin{enumerate}
\item If $1-p_c^{\smallOTSP}\le p\le p_c^{\smallOTSP}$ and $\mc{B}_o$ is infinite, then $\mc{B}_o=\Z^2$.
\item If $p>p_c^{\smallOTSP}$ and $\mc{B}_o$ is infinite, then there exists a decreasing ubf $W$ such that $\mc{B}_o=W_{\le }$.
\item If $p<1-p_c^{\smallOTSP}$ and $\mc{B}_o$ is infinite, then there exists a decreasing lbf $W$ such that $\mc{B}_o=W_{\ge }$.
\end{enumerate}
\end{THM}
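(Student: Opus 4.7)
The plan is to apply Proposition \ref{PRP:trichotomy} to $(\NE \SW)$ and determine which of its cases (ii), (iii), (iv) has positive probability for each range of $p$, by relating ubfs and lbfs to percolation of NE or SW sites in OTSP. The hypotheses of Proposition \ref{PRP:trichotomy} are easily checked: each $\mu(\mc{A}_{\pm e_i})$ is either $p$ or $1-p$, both positive, and since $\NE = \{e_1, e_2\}$ and $\SW = \{-e_1, -e_2\}$ each meet both $\{-e_1, e_2\}$ and $\{e_1, -e_2\}$, one has $\mu(\mc{A}_{\smallNW}) = \mu(\mc{A}_{\smallSE}) = 1$. The claim $\theta_- \in (0,1)$ is immediate from Corollary \ref{cor:infiniteB2}.

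The central technical step I would develop is the following equivalence: a decreasing ubf $W$ with $\mc{B}_o = W_{\le}$ exists if and only if there is a bi-infinite \emph{down-right} OTSP path of NE sites (i.e.\ a sequence of NE sites with consecutive differences in $\{e_1, -e_2, e_1-e_2\}$) whose minimum height in column $o^{[1]}$ strictly exceeds $o^{[2]}$. Reflection $x \mapsto -x$ identifies down-right OTSP with the up-left OTSP defined in the paper, so the critical value is the same $p_c^{\smallOTSP}$. The forward direction uses that NE sites emit only $\uparrow$ and $\rightarrow$, so no open step from a path site can leave the path going down or left, and any trajectory starting strictly above the path is trapped above and cannot reach $o$. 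For the reverse direction, given a decreasing ubf $W$, the non-blocking conditions force each $(x, W(x)+1)$ together with each $(x+1, y)$ for $W(x+1) < y \le W(x)$ to be NE (else $\downarrow$ or $\leftarrow$ from such a site would enter $\mc{B}_o$); these forced NE sites can be chained via steps $e_1, -e_2, e_1-e_2$, with monotonicity of $W$ ensuring that vertical drops $W(x) - W(x+1)$ are traversable one unit at a time by $-e_2$ through further forced NE sites. An analogous correspondence between decreasing lbfs and bi-infinite up-left OTSP paths of SW sites, with the same critical value $p_c^{\smallOTSP}$ applied to the density $1-p$ of SW sites, follows by the $(p,x) \mapsto (1-p,-x)$ symmetry that swaps $\NE \leftrightarrow \SW$ and ubf $\leftrightarrow$ lbf.

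From this equivalence, the three items fall out. For $p > p_c^{\smallOTSP}$, supercriticality of down-right OTSP on NE sites guarantees (by FKG and the zero-one law) that bi-infinite down-right OTSP paths of NE sites exist a.s.\ somewhere; ergodicity of $\nu$ under vertical translations then forces the set of heights $h > o^{[2]}$ at which such a path crosses column $o^{[1]}$ to be almost surely non-empty. Hence a ubf above $o$ exists a.s., which via Proposition \ref{PRP:trichotomy}(b) excludes cases (ii) and (iv), leaving only (iii) on $\{|\mc{B}_o|=\infty\}$; this gives item 2. Item 3 follows by the $(p,x) \mapsto (1-p,-x)$ symmetry. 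For item 1, whenever $p \le p_c^{\smallOTSP}$ the down-right OTSP on NE sites is not supercritical, so (using the standard fact that oriented site percolation has no infinite cluster at or below its critical value) no bi-infinite down-right path exists a.s., hence no ubf, ruling out case (iii); similarly $1-p \le p_c^{\smallOTSP}$ rules out (iv). So on $\{|\mc{B}_o|=\infty\}$, only case (ii) remains in that interval.

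The main obstacle I anticipate is the reverse direction of the ubf-OTSP equivalence: assembling an explicit bi-infinite oriented path from the NE sites forced to lie just above a decreasing ubf. The forward direction is a short one-way-barrier argument based on the shape of NE arrows, but the reverse requires careful bookkeeping of the staircase structure of $W$ and the verification that the step set $\{e_1, -e_2, e_1-e_2\}$ is exactly what lets one traverse vertical drops of the ubf through sites that the non-blocking conditions force to be NE.
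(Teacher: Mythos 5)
Your proposal follows essentially the same route as the paper: check the hypotheses of Proposition \ref{PRP:trichotomy}, get $\theta_-\in(0,1)$ from Corollary \ref{cor:infiniteB2}, and identify decreasing ubfs with doubly-infinite oriented triangular site percolation paths of $\NE$ sites (the paper traverses the boundary up-and-left with step set $\{-e_1,e_2,-e_1+e_2\}$, you go down-and-right with the reversed steps -- an inessential change of direction). The duality argument, the use of FKG and the zero-one law to pass from ``ubf with positive probability'' to ``ubf a.s.'', the symmetry $(p,x)\mapsto(1-p,-x)$ for the lbf case, and the final appeal to part (b) of the Proposition, are all as in the paper.

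There is, however, one real gap in your handling of the boundary values $p = p_c^{\smallOTSP}$ and $p = 1-p_c^{\smallOTSP}$, which the statement of part (a) includes (the inequalities are non-strict). You dismiss this by invoking ``the standard fact that oriented site percolation has no infinite cluster at or below its critical value.'' That fact is standard for oriented percolation on $\Z^2$ (Bezuidenhout--Grimmett), but here the relevant model lives on the oriented \emph{triangular} lattice, and the absence of (doubly-)infinite clusters at criticality there is not part of the off-the-shelf literature; the paper has to cite Theorem 1 of Grimmett and Hiemer \cite{GH02}, together with the remark that the argument -- stated there for a different lattice -- carries over. Your proof should either include that citation with the same caveat, or supply the verification that the argument extends. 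Without it, the critical case is not covered. Apart from this, the reverse direction of the ubf--OTSP correspondence, which you flag as the anticipated obstacle, is handled correctly and in somewhat more explicit detail than the paper provides, and the rest of the argument is sound.
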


\begin{proof} 
That $\theta_-\in (0,1)$ is contained in Corollary \ref{cor:infiniteB2}. Observe further that the hypotheses of Proposition \ref{PRP:trichotomy} hold. So (a) [resp.~(b), resp.~(c)] simply states which of (i) or (ii) [resp.~(i) or (iii), resp.~(i) or (iv)] of Proposition \ref{PRP:trichotomy} holds, for the given range of $p$.  In particular to obtain (b) [resp.~(c)] it is sufficient to show that $\mc{B}_o$ is blocked above [resp.~below].

Now let $w(n)$ be a decreasing function, and consider under what circumstances it can be an upper blocking function. Vertices in $w_\le^c$ which border $w_\le$ lie above or to the right of $w_\le$, and can be enumerated naturally to form a sequence of vertices moving upwards and to the left. More precisely, the possible transitions in this sequence of vertices are as follows. 
\begin{itemize}
\item Upwards, e.g.~from $(n,k)$ to $(n,k+1)$. This happens if $w(n)<k<w(n-1)$. 
\item Leftwards, e.g.~from $(n,k)$ to $(n-1,k)$. This happens if $w(n)=w(n-1)=k-1$. 
\item Diagonally to the NW, e.g.~from $(n,k)$ to $(n-1,k+1)$. This happens if $w(n)<k=w(n-1)$. 
\end{itemize}
We recognize these as the three connections from the vertex $(n,k)$ (if that vertex is open) in {\sl oriented triangular site percolation} model $\OTSP$.
For $w(n)$ to be an upper blocking function, it is necessary and sufficient that each vertex in this sequence have local environment $\NE$. Calling $\NE$ vertices ``open'' and $\SW$ vertices ``closed'', we have established the kind of duality relation that is familiar from percolation: upper blocking functions for our random environment correspond precisely to clusters for {\sl oriented triangular site percolation}. 

Thus upper blocking functions exist if and only if there are doubly infinite oriented percolation clusters. In other words, if and only if there are points $x$ such that $\mc{B}_x^{\smallOTSP}$ and $\mc{C}_x^{\smallOTSP}$ are both infinite (where the superscript indicates that we are referring to connections in the $\OTSP$ model described above).  Trivially this does not occur when $p<p_c^{\smallOTSP}$.  
Observe that the argument of Lemma \ref{lem:B_oC_o} applies equally well to the lattice $\OTSP$. If $p=p_c^{\smallOTSP}$ then 
$\mc{B}_x^{\smallOTSP}$ and $\mc{C}_x^{\smallOTSP}$ are both finite. This is the content of Theorem 1 of Grimmett and Hiemer \cite{GH02} (proved for
a different lattice, but one can check that the argument carries over). 

If $p>p_c^{\smallOTSP}$ then $\nu(|\mc{C}_x^{\smallOTSP}|=\infty)>0$ for each $x$, and so  $\nu(|\mc{B}_x^{\smallOTSP}|=\infty)>0$.  Since the events $|\mc{B}_x^{\smallOTSP}|=\infty$ and $|\mc{C}_x^{\smallOTSP}|=\infty$ depend on disjoint sets of sites (or by the FKG inequality) we also have $\nu(|\mc{C}_x^{\smallOTSP}|=\infty=|\mc{B}_x^{\smallOTSP}|)>0$.  So when $p>p_c^{\smallOTSP}$, ubf's exist with positive probability, and so claim (b) holds by Proposition \ref{PRP:trichotomy}.  By symmetry, lbf's exist if $p<1-p_c^{\smallOTSP}$ and so (c) holds.  Claim (a) now also follows from Proposition \ref{PRP:trichotomy} since in this case $\mc{B}_x$ is neither blocked above nor below. 

\end{proof}

Now consider the 
site percolation model $\FSOSP$, where an open vertex $x\in\Z^2$ connects to 5 neighbours $x-e_1$, $x+e_2$, $x-e_1+e_2$, $x-e_2$, and $x-e_1-e_2$.
Denote the critical $p$ for this model by 
$p_c^{\smallFSOSP}$.
We have not found numerical estimates for $p_c^{\smallFSOSP}$ in the literature, and the best bounds available seem to be
\begin{equation}
\label{FSOSPbounds}
0.3205\le p_c^{\smallFSOSP}\le 0.7491;
\end{equation}
One obtains the upper bound from the inequalities $p_c^{\smallFSOSP}\le p_c^{\smallOTSP}\le p_c^{\smallNE}$ and the bound of Balister et al \cite{BBS} on the latter. The lower bound comes from the inequality $p_c^{\smallFSOSP}\ge 1-p_c^{\smallNSEWalt}$ (a consequence of Corollary \ref{COR:othercasesforB} below, or obtained from the duality between the square lattice and the next-nearest-neighbour square lattice -- see Russo \cite{Russo}), and the upper bound on $p_c^{\smallNSEWalt}$ of Wierman \cite{Wierman95}. We will improve on the lower bound in Theorem~\ref{prp:infiniteM2}, where we show that $p_c^{\smallFSOSP}\ge0.4311$.

\begin{THM}
\label{thm:infiniteB5}
For the model $(\SWE \uparrow)$ with $p\in (0,1)$ we have $\theta_-\in (0,1)$, and $\nu$-a.s.
\begin{enumerate}
\item If $p\ge 1-p_c^{\smallFSOSP}$ and $\mc{B}_o$ is infinite, then $\mc{B}_o=\Z^2$.
\item If $p<1-p_c^{\smallFSOSP}$ and $\mc{B}_o$ is infinite, then there exists a ubf $W$ such that $\mc{B}_o=W_{\le}$.
\end{enumerate}
\end{THM}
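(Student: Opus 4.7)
The plan is to parallel the proof of Theorem \ref{thm:infiniteBorthant}, combining the trichotomy from Corollary \ref{COR:trichotomy} with a duality between upper blocking functions for $(\SWE\uparrow)$ and doubly-infinite open clusters in $\FSOSP$ site percolation. First, $\theta_-\in(0,1)$ is contained in Corollary \ref{cor:infiniteB2}. The hypotheses of Corollary \ref{COR:trichotomy} are immediate: both $A_1=\SWE$ and $A_2=\uparrow$ intersect $\{e_1,e_2\}$ and $\{-e_1,e_2\}$, so $\mu(\mc{A}_{\smallNE})=\mu(\mc{A}_{\smallNW})=1$, while $\mu(\mc{A}_{e_1})=\mu(\mc{A}_{-e_1})=p>0$ and $\mu(\mc{A}_{e_2})=1-p>0$. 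Hence $\nu$-a.s.\ exactly one of (i) $\mc{B}_o$ is finite, (ii) $\mc{B}_o=\Z^2$, or (iii) $\mc{B}_o$ is blocked above by some ubf $W$ holds, and at most one of (ii), (iii) has positive probability.

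Next I would establish the duality. Call $x$ \emph{open} if $\mc{G}_x=\uparrow$ (which occurs independently with probability $1-p$). Since the only arrows in $\SWE$ that can point from a $W_>$-vertex into $W_\le$ are $\downarrow,\leftarrow,\rightarrow$, direct inspection shows that $W\colon\Z\to\Z$ is a ubf if and only if every vertex in
$$S(W)=\bigcup_n\big\{(n,k)\,:\,W(n)<k\le\max\big(W(n-1),\,W(n)+1,\,W(n+1)\big)\big\}$$
is open. A case analysis on the sign of $W(n+1)-W(n)$ shows that consecutive vertices of $S(W)$, enumerated column by column from right to left, are connected by transitions drawn from the set $\{-e_1,+e_2,-e_1+e_2,-e_2,-e_1-e_2\}$: a single $-e_1$-step when $W(n+1)=W(n)$; a $-e_1-e_2$-step followed by $W(n+1)-W(n)-1$ steps of $-e_2$ when $W(n+1)>W(n)$; and $W(n)-W(n+1)-1$ steps of $+e_2$ followed by a $-e_1+e_2$-step when $W(n+1)<W(n)$. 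These are precisely the connections available from an open vertex in $\FSOSP$, so $S(W)$ contains a doubly-infinite directed $\FSOSP$-path of open vertices, and in particular there exists some $x$ with $|\mc{B}_x^{\smallFSOSP}|=|\mc{C}_x^{\smallFSOSP}|=\infty$. Conversely, given any such doubly-infinite open $\FSOSP$-path $\ldots,y_{-1},y_0,y_1,\ldots$ (which visits every column between its extremes, since each $\FSOSP$-move changes the first coordinate by $0$ or $-1$), let $L_n$ and $U_n$ be the minimum and maximum heights of the path in column $n$, and set $W(n)=L_n-1$. Since transition moves between columns change the second coordinate by at most $1$, one has $L_{n\pm 1}\le U_n+1$, and a short calculation then gives $S(W)\cap(\{n\}\times\Z)\subset[L_n,U_n]$, which lies on the path and is therefore open; so $W$ is a ubf.

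Finally I would combine the duality with the critical threshold for $\FSOSP$. If $1-p>p_c^{\smallFSOSP}$, then $\nu(|\mc{C}_o^{\smallFSOSP}|=\infty)>0$, and since $\{|\mc{B}_o^{\smallFSOSP}|=\infty\}$ and $\{|\mc{C}_o^{\smallFSOSP}|=\infty\}$ are both increasing in the open-vertex configuration, FKG gives $\nu(|\mc{B}_o^{\smallFSOSP}|=|\mc{C}_o^{\smallFSOSP}|=\infty)>0$. So ubf's exist with positive probability; by Corollary \ref{COR:trichotomy}(b) case (ii) must then have probability zero, and the trichotomy forces case (iii) whenever $\mc{B}_o$ is infinite, proving (b). If instead $1-p\le p_c^{\smallFSOSP}$, the $\FSOSP$-adaptation of Theorem~1 of Grimmett and Hiemer \cite{GH02} (whose argument carries over as in the proof of Theorem \ref{thm:infiniteBorthant}) rules out any $x$ with both $\mc{B}_x^{\smallFSOSP}$ and $\mc{C}_x^{\smallFSOSP}$ infinite, so no ubf exists a.s., case (iii) has probability zero, and the trichotomy together with $\theta_->0$ forces case (ii) when $\mc{B}_o$ is infinite, proving (a). The main technical obstacle will be the careful combinatorial verification of the duality above---in particular the converse construction of a ubf from an $\FSOSP$-cluster, and checking that the Grimmett--Hiemer criticality argument genuinely carries over to $\FSOSP$.
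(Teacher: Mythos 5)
Your proposal follows the same strategy as the paper's proof: invoke Corollary \ref{cor:infiniteB2} for $\theta_-\in(0,1)$, verify the hypotheses of Corollary \ref{COR:trichotomy}, establish a duality between upper blocking functions and doubly-infinite open $\FSOSP$ paths, and then read off which alternative of the trichotomy can occur depending on whether $1-p$ is above or below $p_c^{\smallFSOSP}$ (using Grimmett--Hiemer at the critical value). Your duality argument is actually more explicit than the paper's: the paper simply lists the five transition types and asserts the correspondence, while you spell out the monotone enumeration of $S(W)$ and give the converse construction $W(n)=L_n-1$ together with the verification $S(W)\cap(\{n\}\times\Z)\subset\{n\}\times[L_n,U_n]$. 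That extra work is sound.

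There is, however, a gap in the supercritical case. You write that since $\{|\mc{B}_o^{\smallFSOSP}|=\infty\}$ and $\{|\mc{C}_o^{\smallFSOSP}|=\infty\}$ are increasing events, FKG gives $\nu(|\mc{B}_o^{\smallFSOSP}|=|\mc{C}_o^{\smallFSOSP}|=\infty)>0$. But FKG only yields $\nu(A\cap B)\ge\nu(A)\nu(B)$; to conclude positivity you also need $\nu(|\mc{B}_o^{\smallFSOSP}|=\infty)>0$, which you have not established. The paper supplies this by invoking Lemma \ref{lem:B_oC_o} (the central-reflection argument $\ffG_x=\mc{G}_{-x}$ that gives $\theta_+=p\,\theta_-$) adapted to the $\FSOSP$ lattice, exactly as it did for $\OTSP$ in the proof of Theorem \ref{thm:infiniteBorthant}. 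You should add that step: once $\nu(|\mc{C}_o^{\smallFSOSP}|=\infty)>0$ gives $\nu(|\mc{B}_o^{\smallFSOSP}|=\infty)>0$ via Lemma \ref{lem:B_oC_o}, FKG (or the observation that the two events depend on disjoint sets of sites, modulo the origin) then closes the argument. With that addition the proposal is correct and essentially coincides with the paper's proof.
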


\begin{proof}
That $\theta_-\in (0,1)$ is contained in Corollary \ref{cor:infiniteB2}. Next, observe that the hypotheses of Corollary \ref{COR:trichotomy} hold. Following the previous argument, we let $w:\Z\to\Z$ be any function (not necessarily monotone; see Figure \ref{fig:WSE_N_B_o}), and consider when it can be an upper blocking function. Again, vertices in $w_\le^c$ which border $w_\le$ now can lie immediately above, to the right, or to the left of points in $w_\le$. Such vertices can again be enumerated to form a sequence, though this time there can be repetition. More precisely, the possible transitions in this sequence are as follows. 
\begin{itemize}
\item Upwards, e.g.~from $(n,k)$ to $(n,k+1)$. This happens if $w(n)<k<w(n-1)$. 
\item Downwards, e.g.~from $(n,k)$ to $(n,k-1)$. This happens if $w(n+1)>k>w(n)$. 
\item Leftwards, e.g.~from $(n,k)$ to $(n-1,k)$. This happens if $w(n)=w(n-1)=k-1$. 
\item Diagonally NW, e.g.~from $(n,k)$ to $(n-1,k+1)$. This happens if $w(n)<k=w(n-1)$. 
\item Diagonally SW, e.g.~from $(n,k)$ to $(n-1, k-1)$. This happens if $w(n-1)<w(n)=k-1$. 
\end{itemize}
We recognize these as the 5 connections from the vertex $(n,k)$ (if that vertex is open) in the $\FSOSP$ model.  
For $w(n)$ to be an upper blocking function, it is necessary and sufficient that each vertex in this sequence have local environment $\uparrow$, since any arrow of $\SWE$ would give an open path from $w_>$ into $w_{\le}$. Calling $\uparrow$ vertices ``open'' and $\SWE$ vertices ``closed'', we have established the same kind of duality relation as before: upper blocking functions for our random environment correspond precisely to doubly infinite oriented paths in percolation clusters for $\FSOSP$.  

Clearly if $1-p<p_c^{\smallFSOSP}$, no such doubly infinite $\FSOSP$ percolation-paths of $\uparrow$ sites exist. If $1-p=p_c^{\smallFSOSP}$ then Theorem 1 of Grimmett and Hiemer \cite{GH02} can once more be extended to our lattice, showing that percolation clusters for $\FSOSP$ are finite. This establishes (a).  

If $1-p>p_c^{\smallFSOSP}$ then $\nu(|\mc{C}^{\FSOSP}_o|=\infty)>0$, so by Lemma \ref{lem:B_oC_o} (for the  lattice $\FSOSP$) and the FKG inequality, $\vphantom{{|^|}^|}\nu(|\mc{B}^{\FSOSP}_o|=\infty=|\mc{C}^{\FSOSP}_o|)>0$.  Thus ubf's exist with positive probability, and therefore by Corollary \ref{COR:trichotomy} ubf's exist almost surely.  This establishes (b). 
\end{proof}

Of the 2-valued models in $d=2$, which ones exhibit the kind of phase transitions of $\mc{B}_x$ that we have been examining? In other words, when does $\mc{B}_x=\Z^2$ happen for some $p$ but not others? We have seen two such models already: $(\NE,\SW)$ and $(\SWE,\uparrow)$. It turns out that there are precisely three more (modulo rotations and reflections) -- see Table \ref{tab:connections}. The following result describes what happens for each of them. We don't give a proof, since the arguments are simple modifications of ones given already. In each case, $p$ is the probability of the first listed local configuration. Note that in these models it is not possible for $\mc{B}_o$ to be finite, since (\ref{BhasAhalfline}) shows that each $\mc{B}_o$ contains a half line. Thus the alternatives are being $\Z^2$ or being blocked. 

\begin{COR}
\label{COR:othercasesforB}
For $d=2$, the following models have phase transitions as shown.
\begin{enumerate}
\item $(\SWE\NE)$: $\nu(\mc{B}_o=\Z^2)=1$ when $1>p\ge 1-p_c^{\smallOTSP}$; $\mc{B}_o$ is blocked above when $p<1-p_c^{\smallOTSP}$.
\item $(\NSEWalt\NE)$: $\nu(\mc{B}_o=\Z^2)=1$ when $1\ge p\ge 1-p_c^{\smallOTSP}$; $\mc{B}_o$ is blocked above when $p<1-p_c^{\smallOTSP}$.
\item $(\NSEWalt\uparrow)$: $\nu(\mc{B}_o=\Z^2)=1$ when $1\ge p\ge 1-p_c^{\smallFSOSP}$; $\mc{B}_o$ is blocked above when $p<1-p_c^{\smallFSOSP}$.
\end{enumerate}
\end{COR}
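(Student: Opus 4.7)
The plan is to mirror the arguments of Theorems~\ref{thm:infiniteBorthant} and~\ref{thm:infiniteB5}, treating the second listed configuration of each model as the ``obstacle'' of density $q=1-p$. For each model I would first check that the hypotheses of Corollary~\ref{COR:trichotomy} hold: in (a), (b), (c) the second configuration is $\NE$ or $\uparrow$ (each contains $e_2$) and the first contains $\leftarrow$ together with either $\rightarrow$ or $\uparrow$, so $\mu(\mc{A}_{\smallNE})=\mu(\mc{A}_{\smallNW})=1$; moreover, for $p\in(0,1)$ (and also $p=1$ in (b),(c)) we have $\mu(\mc{A}_e)>0$ for each $e\neq -e_2$. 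Since $\rightarrow$ is in every configuration of (a) and (b), while $\uparrow$ is in every configuration of (c), the hypothesis of (\ref{BhasAhalfline}) is met in each case, so $\mc{B}_o$ contains a half-line and case~(i) of Corollary~\ref{COR:trichotomy} is ruled out. Thus $\nu$-a.s.\ either $\mc{B}_o=\Z^2$ or $\mc{B}_o$ is blocked above, and at most one of these two events has positive probability.

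Next I would set up the duality with oriented site percolation on the appropriate triangular lattice. In (a) and (b), the always-present $\rightarrow$ forces any ubf $W$ to be weakly decreasing: at the boundary vertex $(n,W(n)+1)\in W_>$ the right-arrow leads to $(n+1,W(n)+1)$, which must lie in $W_>$, whence $W(n+1)\le W(n)$. Mimicking the proof of Theorem~\ref{thm:infiniteBorthant}, the boundary of $W_\le$ is then a sequence whose successive transitions are exactly the three $\OTSP$-connections (up, left, NW-diagonal), and at each such boundary vertex no arrow may enter $W_\le$, which forces that vertex to be $\NE$ (any $\downarrow$ or $\leftarrow$ from the first configuration would break blocking). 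So decreasing ubf's are in bijection with doubly-infinite $\OTSP$-clusters of $\NE$-sites, which occur with density $q=1-p$. For model (c), ubf's need not be monotone, and following the proof of Theorem~\ref{thm:infiniteB5} the five $\FSOSP$-transitions all arise; since $\NSEWalt$ contains every arrow in $\mc{E}$, each boundary vertex must be $\uparrow$-valued, and so ubf's correspond to doubly-infinite $\FSOSP$-clusters of $\uparrow$-sites, again with density $q=1-p$.

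Finally I would read off the phase transition. Let $p_c$ denote $p_c^{\smallOTSP}$ in (a), (b) and $p_c^{\smallFSOSP}$ in (c). If $q\le p_c$ then the auxiliary oriented percolation is subcritical or critical: subcritically no infinite cluster exists, and at criticality the extension of Theorem~1 of Grimmett--Hiemer~\cite{GH02} already invoked in the proofs of Theorems~\ref{thm:infiniteBorthant} and~\ref{thm:infiniteB5} shows that all clusters are finite. Hence no ubf can exist, so by Corollary~\ref{COR:trichotomy} we have $\mc{B}_o=\Z^2$ $\nu$-a.s. If $q>p_c$ then $\nu(|\mc{C}_o|=\infty)>0$ in the auxiliary lattice; by Lemma~\ref{lem:B_oC_o} (which, as noted there, applies to both $\OTSP$ and $\FSOSP$) together with the FKG inequality on the disjoint sets of sites used to define $\mc{B}_o$ and $\mc{C}_o$, there is positive probability that $o$ lies in a doubly-infinite cluster of the auxiliary model. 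So ubf's exist with positive probability, and by the uniqueness part of Corollary~\ref{COR:trichotomy} the event $\{\mc{B}_o=\Z^2\}$ has probability zero, forcing $\mc{B}_o$ to be blocked above $\nu$-a.s. The main technical point requiring care is the verification that every ubf in (a) and (b) must be decreasing, which is what permits the reduction to $\OTSP$ rather than the larger $\FSOSP$; once this is in place the remaining steps are direct modifications of arguments already given.
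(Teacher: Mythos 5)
The paper itself gives no proof of this corollary, stating only that ``the arguments are simple modifications of ones given already,'' and your proposal carries out precisely those modifications: you verify the hypotheses of Corollary~\ref{COR:trichotomy}, rule out case~(i) via~(\ref{BhasAhalfline}), observe that the ever-present $\rightarrow$ forces ubf's to be decreasing in models~(a) and~(b), and then reproduce the $\OTSP$/$\FSOSP$ duality, Grimmett--Hiemer critical-case argument, Lemma~\ref{lem:B_oC_o}, and FKG step exactly as in the proofs of Theorems~\ref{thm:infiniteBorthant} and~\ref{thm:infiniteB5}. This is correct and is exactly the approach the authors intend.
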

Note that the models (b) and (c) above are monotone models since one configuration is a subset of the other.

One can prove duality-type results analogous to those in this section, for the sets $\mc{C}_x$, as well as results about the asymptotic shape of $\mc{B}_x$ or $\mc{C}_x$ when these are blocked above or below. We hope to include them in a subsequent paper.

\section{The communicating clusters $\mc{M}_x=\mc{C}_x \cap \mc{B}_x$}
\label{sec:M_x}
In this section we examine the sets of points that communicate. In many cases, the following trivial lemma immediately shows that $\theta=0$. 
\begin{LEM}
\label{lem:MfiniteTrivially}
Suppose there is some $e$ such that $\mu(\{A:e\in A\})>0$ but $\mu(\{A:-e\in A\})=0$. Then $\mc{M}_x\subset \{y:(y-x)\cdot e = 0\}$ a.s., for every $x$. 
\end{LEM}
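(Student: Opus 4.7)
The plan is to use the hypothesis to show that along any open path, the $e$-component of each increment is nonnegative, so the $e$-coordinate is monotone; combined with communication in both directions, this forces equality.

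First I would argue that the condition $\mu(\{A:-e\in A\})=0$ passes from a single site to all sites simultaneously: by a countable union over $x\in\Z^d$, $\nu$-a.s.\ there is no site $z$ with $-e\in\mc{G}_z$. Work on this almost sure event for the rest of the proof.

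Next, observe that every element of $\mc{E}$ other than $-e$ has inner product with $e$ equal to either $0$ or $+1$ (since $e$ is a coordinate unit vector, $(\pm e_i)\cdot e\in\{-1,0,1\}$, and the value $-1$ is achieved only by $-e$ itself). Hence for any open path $x=x_0,x_1,\dots,x_n=y$ with $x_{i+1}-x_i\in\mc{G}_{x_i}$, each increment satisfies $(x_{i+1}-x_i)\cdot e\ge 0$, and therefore
\begin{equation*}
(y-x)\cdot e=\sum_{i=0}^{n-1}(x_{i+1}-x_i)\cdot e\ge 0.
\end{equation*}
Thus $x\to y$ implies $(y-x)\cdot e\ge 0$.

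Finally, applying the same inequality to a path from $y$ back to $x$ gives $(x-y)\cdot e\ge 0$. If $x\lra y$, both inequalities hold, and we conclude $(y-x)\cdot e=0$, so $y\in\{z:(z-x)\cdot e=0\}$. There is no real obstacle here; the argument is essentially just the observation that forbidding one unit vector makes the coordinate it governs monotone along open paths.
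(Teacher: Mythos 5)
Your proof is correct, and it is exactly the intended (and natural) argument: forbidding $-e$ almost surely everywhere makes the $e$-coordinate nondecreasing along open paths, and two-way communication then forces it to be constant. The paper itself leaves this lemma unproved (calling it trivial), so there is nothing to compare against beyond noting that you have supplied the expected details; you also correctly observe, implicitly, that the hypothesis $\mu(\{A:e\in A\})>0$ plays no role in this particular conclusion.
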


This shows that $\theta=0$ in such cases as models $(\WE\,\uparrow)$ and  
$(\NE\NW)$.
We will now give a non-trivial condition on $\mu$ which guarantees that $\mE[|\mc{M}_o|]<\infty$, and hence that $\theta=0$.  Let $\sigma_d$ denote the self-avoiding walk connective constant in $d$ dimensions, defined as $\sigma_d:=\lim_{N\ra \infty} c_N^{1/N}$,
where $c_N=c_N(d)$ is the number of self-avoiding walks of length $N$ (see e.g.~\cite{MadS}).  See Table \ref{tab:SAWcconst} for values of $\sigma_d$ for $d=2,3,4,5$.  In what follows, $\|x\|_1=\sum_{i=1}^d|x^{[i]}|$ for $x\in \Z^d$.
\begin{table}
\begin{center}
\begin{tabular}{c|c|c}
$d$ & rigorous & estimate\\
\hline
2 & $2.6256\le \sigma_2\le 2.6792$ & 2.63816\\
3& $4.5721\le \sigma_3\le 4.7114$& 4.68404\\
4& $6.7429\le \sigma_4\le 6.8040$ & 6.77404\\
5& $8.8285\le \sigma_5\le 8.8602$ & 8.83854
\end{tabular}
\end{center}
\caption{Rigorous lower and upper bounds and numerical estimates for the self-avoiding walk connective constant $\sigma_d$ for $d=2,3,4,5$ (square lattice), see Finch \cite[Table 5.2]{Finch} and Jensen \cite{J04}.}
\label{tab:SAWcconst}
\end{table}

\begin{THM}
\label{lem:Mfinite}
Fix $d\ge 2$, and suppose that there exists a set $V$ of mutually orthogonal unit vectors such that $\mu(\{A:\emptyset\ne A\subset V\})\ge 1-\epsilon$.  Then $\mE[|\mc{M}_o|]<\infty$ if $\epsilon <\sigma_d^{-2}$.
\end{THM}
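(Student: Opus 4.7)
The plan is to bound $\mE[|\mc{M}_o|]=1+\sum_{y\ne o}\nu(y\in\mc{M}_o)$ by a union bound over pairs of open self-avoiding walks (SAWs) $\pi_1\colon o\to y$ and $\pi_2\colon y\to o$, together with the SAW bound $c_n\le C\sigma_d^n$ and the observation that the complement of $\{A:\emptyset\ne A\subset V\}$ has measure at most $\epsilon$, so ``bad'' sites $x$ (with $\mc{G}_x\not\subset V$ or $\mc{G}_x=\emptyset$) occur independently with probability at most $\epsilon$.

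The key combinatorial input is this: because $V$ consists of mutually orthogonal unit vectors, every closed walk $\omega$ of length $n$ in $\Z^d$ must contain at least $n/2$ steps outside $V$. To see this, let $a_j,b_j$ be the counts of $+v_j$ and $-v_j$ steps of $\omega$ for $v_j\in V$, and $c_i^{\pm}$ the counts of steps in the remaining directions $\pm e_i$ orthogonal to the span of $V$. Projecting the closed-walk identity $\sum(\text{steps})=0$ onto each coordinate axis forces $a_j=b_j$ and $c_i^+=c_i^-$, so the non-$V$ count $\sum_j b_j+\sum_i(c_i^++c_i^-)$ is at least the $V$-count $\sum_j a_j$, hence $\ge n/2$. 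Every non-$V$ step of $\omega$ must originate at a bad site, and because $\pi_1,\pi_2$ are individually self-avoiding, their concatenation visits each site at most twice; the $k\ge n/2$ non-$V$ steps therefore sit at $\ge k/2\ge n/4$ \emph{distinct} bad sites, and by independence
\[
\nu(\pi_1\text{ and }\pi_2\text{ both open})\le \epsilon^{n/4},\qquad n=|\pi_1|+|\pi_2|.
\]
Bounding the number of ordered SAW pairs of given lengths by $c_{n_1}c_{n_2}\le C^2\sigma_d^{n_1+n_2}$ and summing over $n_1,n_2\ge 1$, this already yields $\mE[|\mc{M}_o|]<\infty$ whenever $\epsilon<\sigma_d^{-4}$.

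The hard part is closing the gap to the stated threshold $\sigma_d^{-2}$, which requires improving the per-pair bound from $\epsilon^{n/4}$ to $\epsilon^{n/2}$. The stronger bound does hold whenever $\pi_1\cup\pi_2$ is a self-avoiding polygon (SAP) through $o$ and $y$, for then the $k\ge n/2$ non-$V$ steps live at $k$ distinct bad sites; summing over SAPs through the origin (of which there are $\le C\sigma_d^n$) against the at most $n$ choices of $y\ne o$ on each gives $\sum_n n(\sigma_d\sqrt{\epsilon})^n<\infty$, finite precisely when $\epsilon<\sigma_d^{-2}$. The remaining ``figure-eight'' pairs, where $\pi_1$ and $\pi_2$ share interior vertices and $\pi_1\cup\pi_2$ fails to be a SAP, I would handle by the Eulerian decomposition of $\pi_1\cup\pi_2$ into edge-disjoint simple directed cycles -- each itself a SAP -- and by assigning to each $y\in\mc{M}_o$ a cycle in the decomposition containing $y$, thereby still reducing to the SAP generating function. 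The chief technical difficulty lies in making this assignment canonical so as to avoid overcounting, in showing that the count of such Eulerian substructures of total length $n$ grows only like $O(n^a\sigma_d^n)$, and in verifying that non-$V$ steps shared among different constituent cycles still yield the full $\epsilon^{n/2}$ weight.
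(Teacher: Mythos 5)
Your argument is genuinely different in strategy from the paper's, and it is incomplete at exactly the point you flag: the figure-eight pairs. As written it proves the theorem only for $\epsilon<\sigma_d^{-4}$. Your SAP computation is correct and does hit $\sigma_d^{-2}$, but most $y\in\mc{M}_o$ are \emph{not} joined to $o$ by a self-avoiding polygon, and the Eulerian reduction you sketch does not obviously close the gap. Concretely: the constituent cycles of an Eulerian decomposition of $\pi_1\cup\pi_2$ are edge-disjoint but not vertex-disjoint, and $\pi_1$, $\pi_2$ can even traverse the \emph{same} directed edge (e.g.\ $\pi_1=o\,a\,b\,c\,y$, $\pi_2=y\,a\,b\,o$), so a bad site can supply the ``non-$V$'' certificate to two different cycles while contributing only one factor of $\epsilon$. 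This is precisely the independent-$\epsilon$-weights problem you acknowledge, together with the overcounting and the fact that the constituent cycles need not pass through $o$ (so their number is not controlled by $c_n$). These are not cosmetic; they are the heart of what is left to prove.

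The paper sidesteps the two-path combinatorics entirely. First it applies Cauchy--Schwarz pointwise,
\[
\nu(y\in\mc{M}_o)=\nu\big(y\in\mc{C}_o,\ o\in\mc{C}_y\big)\le \nu(y\in\mc{C}_o)^{1/2}\,\nu(o\in\mc{C}_y)^{1/2},
\]
so that only \emph{one-directional} reachability probabilities need to decay. It then sets $v=\sum_{u\in V}u$ and splits $\Z^d$ by the hyperplane $\{x\cdot v=0\}$: for any $y$ with $\|y\|_1=N$, at least one of $y$ or $-y$ lies in the half-space $\{x\cdot v\le 0\}$, and by translation invariance $\nu(o\in\mc{C}_y)=\nu(-y\in\mc{C}_o)$. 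A SAW from $o$ ending at a point with $x\cdot v\le 0$ must, by the same coordinate-projection bookkeeping you used for closed walks, take at least half of its $M$ steps in non-$V$ directions, and because the walk is self-avoiding those steps originate at $M/2$ \emph{distinct} bad sites; this gives $\nu(\mc{C}_o\cap L_N^-\neq\emptyset)\le \sum_{M\ge N}c_M\,\epsilon^{\lfloor M/2\rfloor}\le C\eta^N$ whenever $\sqrt{\epsilon}\,\sigma_d<\eta<1$, i.e.\ whenever $\epsilon<\sigma_d^{-2}$. Substituting into the Cauchy--Schwarz bound makes one square-root factor $O(\eta^{N/2})$ and the other at most $1$, and summing $N^{d-1}\eta^{N/2}$ over $N$ gives $\mE[|\mc{M}_o|]<\infty$. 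So the $\sigma_d^{-2}$ threshold comes from a single SAW with half its steps bad, not from a SAP generating function, and the whole figure-eight difficulty never arises.
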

\proof 

Without loss of generality, assume that $V=\{e_1,\dots, e_k\}$ for some $k\le d$, and let $v=\sum_{u\in V}u$.  
Let $L^+$ (resp. $L^-$) be the set of points $x\in \Z^d$ such that $x\cdot v\ge 0$ (resp. $\le 0$).  Let $L^+_N=L^+\cap \{x:\|x\|_1=N\}$ and similarly for $L_N^-$.  Note that $\{x:\|x\|_1<r\}\cup \bigcup_{N=r}^{\infty}(L^+_N\cup L_N^-)=\Z^d$.

Suppose that $\mu(\{A:\emptyset\ne A\subset V\})\ge 1-\epsilon$.  We claim that for $\epsilon<\sigma_d^{-2}$ there exist $c>0$ and $\eta\in (0,1)$ such that for all $N$, 
\begin{equation}
\nu(\mc{C}_o\cap L_N^-\ne \emptyset)<c\eta^N.
\label{claimoflem:Mfinite}
\end{equation}  
Assume for the moment that this is true. 
For $x\in L_N^-$, $\nu(x\in\mc{C}_o)\le \nu(\mc{C}_o\cap L_N^-\ne \emptyset)$. Likewise if $x\in L_N^+ $ then by translation invariance,
$\nu(o\in\mc{C}_x)=\nu(-x\in\mc{C}_o)\le \nu(\mc{C}_o\cap L_N^-\ne \emptyset)$. 
For $r\gg 1$, this implies that 
\eqalign
\mE[|\mc{M}_o|]=&\mE\left[\sum_{x\in \Z^d} I_{\{x \in \mc{C}_o\}}I_{\{o \in \mc{C}_x\}}\right]\le \sum_{x\in \Z^d}\nu(x \in \mc{C}_o)^{\hlf}\nu(o \in \mc{C}_x)^{\hlf}\nn\\
\le&\sum_{x:\|x\|_1< r}1+\sum_{N=r}^{\infty}\left(\sum_{x\in L_N^-}\nu(x \in \mc{C}_o)^{\hlf}+\sum_{x\in L_N^+}\nu(o \in \mc{C}_x)^{\hlf}\right)\nn\\
\le &C_d r^d+c'\sum_{N=r}^{\infty}\left(\sum_{x\in L_N^-}\eta^{\frac{N}{2}}+\sum_{x\in L_N^+}\eta^{\frac{N}{2}}\right)\le C_d r^d+\sum_{N=r}^{\infty}C_d'N^{d-1}\eta^{\frac{N}{2}}<\infty.\nn
\enalign
Hence it only remains to verify the claim of (\ref{claimoflem:Mfinite}).

If $\mc{C}_o\cap L_N^-\ne \emptyset$ then there is an open self-avoiding path from $o$ to some $x$ such that $\sum_{i=1}^kx_i\le 0$ and $\|x\|_1=N$.  Moreover any finite path connecting $o$ to $x$ must consist of at least $N$ steps, with at least half of the steps of the path being taken in the directions taken from $\{e_1,\dots,e_k\}^c$ (if more than half of the steps of a finite path are taken from $\{e_1,\dots,e_k\}$, then the endpoint $y$ of the path must have $\sum_{i=1}^ky_i>0$).  

The probability that at least one of the $c_N$ self-avoiding paths of length $N$ is an open path, at least half of whose steps are in the directions $\{e_1,\dots,e_k\}^c$ is at most $c_N$ times the maximum (over paths) probability that a particular such path is open.  
Any such path has at least $\lfloor N/2\rfloor$ steps in $\{e_1,\dots,e_k\}^c$, each of which is open with probability at most $\epsilon$.  
It follows that the probability that there is an open self-avoiding path of length $N$ starting from the origin, at least half of whose steps are in the directions $\{e_{k+1},\dots,e_d\}$ is at most 
\[
c_N\epsilon^{\floor{\frac{N}{2}}}\le C\eta^N,
\]
provided that $\sqrt{\epsilon}\sigma_d<\eta$. Accordingly, if $\epsilon<\sigma^{-2}_d$, we can find such an $\eta<1$.
\qed

Note that we can improve on the bound $\sigma_d^{-2}$ with more information about the measure $\mu$. For example, suppose in addition to the hypotheses of Theorem \ref{lem:Mfinite}, we have a dichotomy
$$
\mu(\{A: A\subset V\text{ or }A\subset V^c\})=1.
$$
Then as long as $\epsilon<1/2$, the bound on the relevant probability becomes
\[
c_N\epsilon^{\floor{\frac{N}{2}}}(1-\epsilon)^{N-\floor{\frac{N}{2}}},
\]
and solving the quadratic inequality, we find an $\eta<1$ giving the conclusion of the theorem, provided that
$$
\epsilon<\epsilon_d^0=\hlf \left(1-\sqrt{1-\frac{4}{\sigma_d^2}}\right).
$$
For $d=2$, the rigorous upper bound on $\sigma_2$ gives values $\sigma_2^{-2}=0.13931$ and $\epsilon_2^0=0.16730$

This immediately implies the following. 
\begin{COR} All $\mc{M}_x$ are finite 
\label{cor:Mfinite_2d_orthant}
in the model $(\NE \SW)$ 
whenever $p>1-\epsilon_2^0=0.83270$ or $p<\epsilon_2^0=0.16730$, and in the model
$(\SWE\uparrow)$ 
whenever $p<\epsilon_2^0=0.16730$.
\end{COR}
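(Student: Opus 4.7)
The plan is to appeal directly to the improved version of Theorem~\ref{lem:Mfinite} stated in the remark immediately following that theorem: if in addition to the hypotheses of the theorem one has the dichotomy $\mu(\{A:A\subset V\text{ or }A\subset V^c\})=1$, then $\mE[|\mc{M}_o|]<\infty$ provided $\epsilon<\epsilon_d^0$. In $d=2$ this threshold is $\epsilon_2^0=0.16730$, so in each case one only needs to identify an orthogonal set $V$ for which the dichotomy applies and to track which of $p$ and $1-p$ plays the role of $\epsilon$.

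For the model $(\NE\,\SW)$, the two local environments are $A_1=\{e_1,e_2\}$ and $A_2=\{-e_1,-e_2\}$. First I would take $V=\{e_1,e_2\}$, which is a set of mutually orthogonal unit vectors. Then $A_1\subset V$ and $A_2\subset V^c$, so the dichotomy holds, and $\mu(\{A:\emptyset\neq A\subset V\})=p$, i.e.\ $\epsilon=1-p$. The improved bound thus gives $\mE[|\mc{M}_o|]<\infty$ whenever $1-p<\epsilon_2^0$, i.e.\ $p>0.83270$. For the opposite regime, I would instead use $V=\{-e_1,-e_2\}$; now $A_2\subset V$ and $A_1\subset V^c$, the dichotomy still holds, and $\epsilon=p$, giving finiteness whenever $p<\epsilon_2^0=0.16730$.

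For the model $(\SWE\,\uparrow)$ I would take the orthogonal (singleton) set $V=\{e_2\}$. The local environments are $A_1=\{e_1,-e_1,-e_2\}$ and $A_2=\{e_2\}$, so $A_2\subset V$ and $A_1\subset V^c$ and the dichotomy holds. Here $\mu(\{A:\emptyset\neq A\subset V\})=1-p$, so $\epsilon=p$ and the improved bound yields $\mE[|\mc{M}_o|]<\infty$ whenever $p<\epsilon_2^0=0.16730$.

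In each scenario the conclusion $\mE[|\mc{M}_o|]<\infty$ implies $|\mc{M}_o|<\infty$ a.s., and translation invariance of $\nu$ upgrades this to $|\mc{M}_x|<\infty$ a.s.\ for every $x\in\Z^2$, which is the statement of the corollary. There is no real obstacle here: Theorem~\ref{lem:Mfinite} and its improved remark do all the heavy lifting, and the only care required is the combinatorial matching of $V$ to the two local configurations so that both the orthogonality hypothesis and the dichotomy are in force simultaneously.
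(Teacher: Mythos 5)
Your proof is correct and takes the same route the paper intends: the corollary is stated immediately after the remark sharpening Theorem~\ref{lem:Mfinite}, and the paper presents it as an immediate consequence of that remark, exactly as you argue. Your choices of $V$ are the right ones, the dichotomy $\mu(\{A:A\subset V\text{ or }A\subset V^c\})=1$ holds in each case since the two charged configurations split cleanly between $V$ and $V^c$, and the bookkeeping of which of $p$ or $1-p$ plays the role of $\epsilon$ is handled correctly; the final passage from $\mE[|\mc{M}_o|]<\infty$ to a.s.~finiteness of all $\mc{M}_x$ via translation invariance and countability of $\Z^2$ is the standard closing step.
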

For further 2-dimensional examples, see Table \ref{tab:connections}. For $d\ge 2$ we call the model $(\mc{E}_+\,\, \mc{E}_-)$ the {\it orthant model} (so $(\NE\SW)$ is the case $d=2$). For this model we have that 
 $\mc{M}_x$ is a.s.~finite whenever $p<\epsilon_d^0$ or  $p>1-\epsilon_d^0$.


For some models we can instead show that there are infinite mutually-connected clusters.  The following two lemmas are trivial and apply e.g.~to models $(\SWE\WE)$ and $(\SWE\NWE) $ respectively.  
\begin{LEM}
Suppose there is an $e$ such that $\mu(\{A:e\in A, -e\in A\})=1$. Then $\theta=1$.
\end{LEM}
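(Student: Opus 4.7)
The plan is to prove this directly by exhibiting an infinite subset of $\mc{M}_o$, using only arrows along the single axis spanned by $e$. The hypothesis says that at every site $x$, with probability one, both $e$ and $-e$ belong to $\mc{G}_x$. Since $\nu$ is a product measure, this event holds simultaneously at all sites almost surely.

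First I would show that for every integer $n\ge 1$, the straight path $o,e,2e,\dots,ne$ is open in $\mc{G}$, because at each of the vertices $o,e,\dots,(n-1)e$ the step $+e$ is available; this gives $o\to ne$. Reversing, the straight path $ne,(n-1)e,\dots,o$ is open because at each of $ne,(n-1)e,\dots,e$ the step $-e$ is available; this gives $ne\to o$. The same argument with roles swapped yields $o\lra (-n)e$ for every $n\ge 1$. Consequently $\{ne:n\in\Z\}\subset \mc{M}_o$, so $|\mc{M}_o|=\infty$ on the full-measure event, and $\theta=1$.

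There is no real obstacle here: the only thing to note is that the hypothesis is a statement about each single site (via $\mu$), which translates into a full-measure event on the product space $\nu$ by countable intersection over $\Z^d$, permitting us to construct the required bi-infinite line of communicating vertices deterministically on that event.
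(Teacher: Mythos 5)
Your argument is correct and is exactly the obvious verification: the paper itself does not give a proof of this lemma, labeling it (together with Lemma \ref{lem:MisZd}) as trivial. Your observation that the hypothesis, being a statement about $\mu$ at a single site, upgrades by countable intersection to a $\nu$-full-measure event where both $\pm e$ are available everywhere, and that this yields the bi-infinite line $\{ne : n\in\Z\}\subset\mc{M}_o$, is precisely the intended reasoning.
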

\begin{LEM}
\label{lem:MisZd}
Suppose that for every $e$, $\mu(\{A:e\in A\})>0$, and that for some $\tilde e$, $\mu(\{A:\tilde e\in A, -\tilde e\in A\})=1$. Then $\mc{M}_x=\Z^d$ for every $x$.
\end{LEM}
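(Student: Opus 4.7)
The plan is to exploit the bidirectional availability of $\tilde e$. Since $\mu(\{A : \tilde e, -\tilde e \in A\}) = 1$, almost surely every vertex satisfies $\{\tilde e, -\tilde e\} \subset \mc{G}_z$, so from any point $x$ one can move freely along the axis line $\{x + n\tilde e : n \in \Z\}$. In particular $x \lra x + n\tilde e$ for every $n \in \Z$, and the remaining work is to upgrade this to $x \lra x+e$ for every unit vector $e$, then chain along nearest-neighbor paths.

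The main step is the claim that $\nu(o \lra e_1) = 1$ whenever $e_1 \ne \pm\tilde e$ (the case $e_1 = \pm\tilde e$ being already in hand). Because the $\mc{G}_z$ are i.i.d., the events $\{e_1 \in \mc{G}_{n\tilde e}\}_{n\in\Z}$ are i.i.d.\ with positive probability, so almost surely some such $n$ exists; combining that single lateral step with free travel along $\tilde e$ produces an open path $o \to \tilde e \to \cdots \to n\tilde e \to n\tilde e + e_1 \to \cdots \to e_1$. A symmetric argument, using the positivity of $\mu(\{A : -e_1 \in A\})$ to find $m$ with $-e_1 \in \mc{G}_{e_1 + m\tilde e}$, yields an open path $e_1 \to o$. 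The same argument applies with $e_1$ replaced by any $e \in \mc{E} \setminus \{\pm\tilde e\}$.

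By translation invariance of $\nu$, it follows that for every $x \in \Z^d$ and every $e \in \mc{E}$ one has $\nu(x \lra x+e) = 1$. Intersecting over the countably many such pairs $(x,e)$ shows that almost surely every pair of nearest neighbors in $\Z^d$ communicates, and chaining the $\lra$ relation along any nearest-neighbor path from $x$ to $y$ then gives $x \lra y$ for all $x, y \in \Z^d$. Hence $\mc{M}_x = \Z^d$ almost surely for every $x$. There is no serious obstacle here; the only subtle point is that promoting the single-pair almost-sure statement to an almost-sure statement about all pairs simultaneously requires the countable-intersection step, which is where translation invariance of $\nu$ is essential.
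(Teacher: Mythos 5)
Your proof is correct. The paper gives no argument for this lemma (it is declared ``trivial''), and your approach — free travel along the $\tilde e$-axis, a Borel--Cantelli-style observation to find a lateral step in each direction at some point on that axis, translation invariance plus a countable intersection, and transitivity of $\lra$ — is the natural one-line-expanded version of what is being asserted.
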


In the more interesting cases, which we now turn to, there will be a unique infinite $\mc{M}_x$, and infinitely many finite $\mc{M}_y$. We start with the following stronger definition.

\begin{DEF}
\label{def:gigantic}
We say that  $\mc{G}$ has a {\em gigantic $\mc{M}$-component} if there is an $x$ such that $\mc{M}_x$ is infinite and all $\Z^d$-connected components of $\Z^d\setminus\mc{M}_x$ are finite.
\end{DEF}

\begin{LEM}
\label{lem:giganticimplications}
Assume that $\theta_+=1$ and that $\mc{G}$ has a gigantic $\mc{M}$-component $\mc{M}_x$. Then $\nu$-a.s.,
\begin{enumerate}
\item $\mc{M}_x$ is the only infinite equivalence class for the communication relation,
\item $\mc{C}_y\supset\mc{M}_{x}$ for every $y$, so all $\mc{C}_y$ intersect, and $\mc{C}_y=\mc{M}_x$ for $y\in\mc{M}_x$,  
\item $\mc{B}_y=\Z^d$ for $y\in\mc{M}_{x}$, and $\mc{B}_y$ is finite otherwise. 
\end{enumerate}
\end{LEM}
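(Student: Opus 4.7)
The plan is to establish (b) first, derive (c) from (b), and then obtain (a) as an easy corollary of (c). The key structural observation is that for any $y$, both $\mc{C}_y$ and (in the case $y\notin\mc{M}_x$) $\mc{B}_y$ are nearest-neighbor connected subsets of $\Z^d$: any open path from $y$ is a sequence of nearest-neighbor $\Z^d$-steps, and every vertex $v$ on such a path is in $\mc{C}_y$ (via the initial portion of the path) and, on a path terminating at $y$, is in $\mc{B}_y$ (via the terminal portion). From $\theta_+=1$, together with translation invariance of $\nu$ and countable additivity over $y\in\Z^d$, it follows that a.s.~$|\mc{C}_y|=\infty$ for every $y$. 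Since all $\Z^d$-components of $\Z^d\setminus\mc{M}_x$ are finite, no infinite $\Z^d$-connected set can lie inside $\Z^d\setminus\mc{M}_x$, so $\mc{C}_y\cap\mc{M}_x\ne\emptyset$ for every $y$.

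For (b), take any $z\in\mc{C}_y\cap\mc{M}_x$. Because $z$ communicates with every $w\in\mc{M}_x$ we have $y\to z\to w$, so $\mc{C}_y\supset\mc{M}_x$. In particular all the $\mc{C}_y$'s contain the common set $\mc{M}_x$ and therefore intersect. Now suppose $y\in\mc{M}_x$. Then trivially $\mc{M}_x=\mc{M}_y\subset\mc{C}_y$. For the reverse inclusion, let $z\in\mc{C}_y$; applying the same meeting argument to the infinite $\Z^d$-connected set $\mc{C}_z$ produces some $z'\in\mc{C}_z\cap\mc{M}_x$. Since $y,z'\in\mc{M}_x$ communicate, $z\to z'\to y$, so $z\in\mc{B}_y$; combined with $z\in\mc{C}_y$ this yields $z\in\mc{M}_y=\mc{M}_x$, hence $\mc{C}_y=\mc{M}_x$.

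For (c), if $y\in\mc{M}_x$ then by (b) every $\mc{C}_z$ contains $\mc{M}_x\ni y$, so $z\to y$ for every $z$ and $\mc{B}_y=\Z^d$. Suppose instead $y\notin\mc{M}_x$. First, $\mc{B}_y\cap\mc{M}_x=\emptyset$: any $m\in\mc{M}_x$ with $m\to y$ would force $y\in\mc{C}_m=\mc{M}_x$ by (b), contradicting $y\notin\mc{M}_x$. Second, $\mc{B}_y$ is $\Z^d$-connected to $y$, since on any open path realizing $z\to y$, every vertex lies in $\mc{B}_y$. Together these two facts place $\mc{B}_y$ inside the $\Z^d$-component of $y$ in $\Z^d\setminus\mc{M}_x$, which is finite by the gigantic hypothesis. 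Finally, (a) is immediate: if $\mc{M}_w$ is infinite then $\mc{M}_w\subset\mc{B}_w$ is infinite, so (c) forces $w\in\mc{M}_x$, whence $\mc{M}_w=\mc{M}_x$.

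The only real bookkeeping to worry about is the $\Z^d$-connectedness step at the very beginning, since that is where the definition of ``gigantic'' is invoked: one must be careful that the paths built from the directed environment really do certify nearest-neighbor connectedness of $\mc{C}_y$ (and, in the relevant case, of $\mc{B}_y$ to $y$), so that the ``only finite $\Z^d$-components'' assumption can be used both to force intersection of $\mc{C}_y$ with $\mc{M}_x$ and, in (c), to confine $\mc{B}_y$ to a finite region.
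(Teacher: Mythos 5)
Your proof is correct and follows essentially the same line of reasoning as the paper: exploit that $\mc{C}_y$, $\mc{B}_y$, and $\mc{M}_y$ are all $\Z^d$-connected, so the gigantic hypothesis (finite components of $\Z^d\setminus\mc{M}_x$) forces any infinite such set to meet $\mc{M}_x$. The only difference is organizational — you deduce (a) as a corollary of (c), whereas the paper proves (a) directly by noting each $\mc{M}_y$ is itself connected and hence either coincides with $\mc{M}_x$ or is trapped in a finite component — but this is a cosmetic reordering, not a different method.
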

\begin{proof} (a) is immediate, since each $\mc{M}_y$ is connected, and either coincides with $\mc{M}_x$ or is disjoint from it. In the latter case, it is therefore contained in a component of $\Z^d\setminus\mc{M}_x$, which is finite by hypothesis. Since all $\mc{C}_y$ are infinite, they leave finite components of $\Z^d\setminus\mc{M}_x$, and hence intersect $\mc{M}_x$.  This implies that $\mc{C}_y\supset\mc{M}_{x}$ for every $y$ and that $\mc{B}_y=\Z^d$ for $y\in\mc{M}_{x}$ and that $\mc{B}_y\subset \Z^d\setminus\mc{M}_x$ is finite otherwise.  Finally, since  $\mc{C}_z\supset\mc{M}_{x}$ for any $z$, if $y\in\mc{M}_{x}$ then $y\in \mc{C}_z$ for each $z\in \mc{C}_y$ so $\mc{C}_y=\mc{M}_y=\mc{M}_x$ if $y\in\mc{M}_{x}$.  
\end{proof}

Under the assumption $\theta_+=1$, we believe that having $\mc{B}_y=\Z^2$ in 2 dimensions is equivalent to having $\mc{M}_y$ be a gigantic $\mc{M}$-component. 
The following theorem verifies this under additional regularity conditions on $\mu$.

\begin{THM}
\label{BvsM}
Assume the hypotheses of Proposition \ref{PRP:trichotomy} or Corollary \ref{COR:trichotomy}. There is $\nu$-almost surely a gigantic $\mc{M}$-component 
$\Longleftrightarrow \nu(\mc{B}_o=\Z^2)>0$.
\end{THM}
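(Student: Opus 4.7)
For $(\Rightarrow)$, suppose a gigantic $\mc{M}$-component $\mc{G}_\infty$ exists $\nu$-a.s. Lemma \ref{lem:giganticimplications}(c) gives $\mc{B}_y=\Z^2$ for every $y\in\mc{G}_\infty$. If $\nu(o\in\mc{G}_\infty)=0$, translation invariance of $\nu$ yields $\nu(x\in\mc{G}_\infty)=0$ for every $x\in\Z^2$, so by countable subadditivity $\nu(\mc{G}_\infty\ne\emptyset)=0$, contradicting the a.s.\ infiniteness of $\mc{G}_\infty$. Thus $\nu(\mc{B}_o=\Z^2)\geq\nu(o\in\mc{G}_\infty)>0$.

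For $(\Leftarrow)$, assume $\nu(\mc{B}_o=\Z^2)>0$ and set $\mc{G}_\infty=\{y:\mc{B}_y=\Z^2\}$. The hypothesis $\mu(\mc{A}_{\smallNW})=1$ (valid in both settings) yields $\theta_+=1$ via Lemma \ref{lem:criteriontobeinfinite} applied to the orthogonal pair $V=\{-e_1,e_2\}$. Proposition \ref{PRP:trichotomy}(b) (or Corollary \ref{COR:trichotomy}(b)) forces cases (iii) and (iv) to have probability zero, so a.s.\ each $\mc{B}_y$ is either finite or all of $\Z^2$. A routine transitivity check shows that $\mc{G}_\infty$ equals the $\mc{M}$-class of any $y^*\in\mc{G}_\infty$: $z\in\mc{C}_{y^*}$ together with $\mc{B}_{y^*}=\Z^2$ gives $z\leftrightarrow y^*$, and conversely $z\leftrightarrow y^*$ forces $w\to z\to y^*$ for every $w$ and hence $\mc{B}_z=\Z^2$. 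Since $\theta_+=1$, $\mc{M}_{y^*}=\mc{C}_{y^*}$ is infinite. The event $\{\mc{G}_\infty\ne\emptyset\}$ is translation invariant and has positive probability, so by ergodicity of the product measure $\nu$ it has probability one.

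It remains, and this is the main obstacle, to show that every $\Z^2$-connected component of $\mc{G}_\infty^c$ is finite. From $\mu(\mc{A}_{\smallNW})=\mu(\mc{A}_{\smallSE})=1$ (with the obvious replacement under Corollary \ref{COR:trichotomy}), any $\NW$- or $\SE$-arrow out of $y^*\in\mc{G}_\infty$ must point back into $\mc{G}_\infty$: its target $z$ satisfies $y^*\to z$ and, since $\mc{B}_{y^*}=\Z^2$, also $z\to y^*$, so $z$ lies in the $\mc{M}$-class $\mc{G}_\infty$. Hence $\mc{G}_\infty$ contains infinite $\NW$- and $\SE$-paths based at every one of its points. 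A rotation of Lemma \ref{lem:coalescence1} (using an auxiliary coin flip at each vertex where two directions are simultaneously available, to single out a unique forward path) then forces the $\NW$- (resp.\ $\SE$-) path starting from any $y\in\mc{G}_\infty^c$ to coalesce in finitely many steps with the corresponding path of some $y^*\in\mc{G}_\infty$, and thus to enter $\mc{G}_\infty$ at a finite random time. The plan is to combine the resulting finite $\NW$- and $\SE$-excursions from $y$ with connecting arcs inside $\mc{G}_\infty$ (obtained from the coalescing $\NW$/$\SE$ network in $\mc{G}_\infty$) to construct a simple closed polygonal curve in $\Z^2$ enclosing $y$; the Jordan curve theorem then confines the $\Z^2$-component of $y$ in $\mc{G}_\infty^c$ to the finite interior. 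The hard part is carrying out this topological construction carefully enough that the enclosing curve genuinely traps the whole component rather than just $y$ itself.
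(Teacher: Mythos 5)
Your argument up to the final step is correct and agrees with the paper's logic: the $(\Rightarrow)$ direction is Lemma~\ref{lem:giganticimplications}(c) plus translation invariance; for $(\Leftarrow)$ you correctly identify the unique infinite communication class $\mc{M}=\mc{G}_\infty=\{y:\mc{B}_y=\Z^2\}$, show it is nonempty a.s.~by ergodicity, and observe that it contains the entire NW- and NE/SE-subnetwork emanating from any of its points.

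However, the crucial last step --- showing that every $\Z^2$-connected component of $\mc{G}_\infty^c$ is finite --- is not completed, and you explicitly flag this yourself. The plan you describe (run NW/SE paths \emph{out of} an arbitrary $y\in\mc{G}_\infty^c$ until they hit $\mc{G}_\infty$, then close up with arcs inside $\mc{G}_\infty$) does not obviously produce a curve that traps the whole $\mc{G}_\infty^c$-component of $y$ rather than just $y$: the two hitting points lie on the same side of $y$ in general, the closing arcs inside $\mc{G}_\infty$ are unconstrained, and nothing prevents the resulting closed curve from winding in a way that leaves part of $y$'s component outside. The paper resolves this by running paths from \emph{inside} $\mc{M}$ rather than from $y$: by ergodicity one can pick $x_0,x_1\in\mc{M}$ on the column of $o$ with $x_0^{[2]}<0<x_1^{[2]}$; since $\mu(\mc{A}_{\smallNW})=1$ (and $\mu(\mc{A}_{\smallNE})=1$ or $\mu(\mc{A}_{\smallSE})=1$) one can run two coalescing-walk sub-networks from both $x_0$ and $x_1$, and Lemma~\ref{lem:coalescence1} gives coalescence points $x_2$ and $x_3$. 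Because $\mc{C}_{x_0}=\mc{M}_{x_0}=\mc{M}=\mc{C}_{x_1}$, the four path segments $x_0\to x_2$, $x_1\to x_2$, $x_0\to x_3$, $x_1\to x_3$ all lie in $\mc{M}$ and close up into a cycle. Since $x_0$ and $x_1$ straddle $o$ on the vertical axis and the two subnetworks go to opposite sides, this cycle necessarily encloses $o$ (avoidance of $o$ forces the paths to the appropriate sides), and hence encloses $o$'s whole component of $\Z^2\setminus\mc{M}$. You should replace your final paragraph with this construction, or supply a genuinely different argument for the entrapment; as written, the proof has a real gap at exactly the point you identify.
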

\begin{proof}
Note that the hypotheses of Proposition \ref{PRP:trichotomy} or Corollary \ref{COR:trichotomy} imply $\theta_+=1$.
So the $\Rightarrow$ direction just reiterates (c) of Lemma \ref{lem:giganticimplications}, and we must prove $\Leftarrow$. 

First, assume the hypotheses of Corollary \ref{COR:trichotomy}. Suppose $\{\mc{B}_o=\Z^2\}$ has positive probability. Then $\nu$-a.s., $\mc{B}_y$ is infinite for some $y$ by Lemma \ref{lem:exist_inf_B}, and by Corollary \ref{COR:trichotomy}, $\mc{B}_y=\Z^2$. Thus, $\mc{M}_y=\mc{C}_y$ is infinite a.s.~since $\theta_+=1$.  Likewise any other $z$ with $\mc{M}_z$ infinite automatically has $\mc{B}_z=\Z^2$, and hence $\mc{M}_z=\mc{M}_y$ (as we can get $y\leftrightarrow z$). Thus there is only one infinite equivalence class for the communication relation, which we denote by $\mc{M}$. It remains to show that $\mc{M}$ is gigantic, that is, that connected components of $\Z^2\setminus\mc{M}$ are finite.  


Suppose $o\notin\mc{M}$. It is sufficient to consider the $\Z^2\setminus\mc{M}$-connected component of $o$. By ergodicity,  find $x_0$ and $x_1$ in $\mc{M}$ with $x_0^{[1]}=x_1^{[1]}=0$ and $x_0^{[2]}<0<x_1^{[2]}$.
Run the NW paths (corresponding to a subnetwork $(\uparrow\,\leftarrow)$) from $x_0$ and $x_1$. By Lemma \ref{lem:coalescence1} these paths meet at some point $x_2$. Run NE paths (corresponding to $(\uparrow\,\rightarrow)$) from $x_0$ and $x_1$ till they meet at a point $x_3$. 
Since $x_0\in\mc{M}$ we know that $\mc{B}_{x_0}\supset\mc{M}_{x_0}=\mc{M}$ is infinite, and therefore $\mc{B}_{x_0}=\Z^2$. 
So $\mc{M}=\mc{M}_{x_0}=\mc{C}_{x_0}$. Likewise $\mc{M}=\mc{M}_{x_1}=\mc{C}_{x_1}$.
The four paths $x_0\to x_2$, $x_1\to x_2$, $x_0\to x_3$, and $x_1\to x_3$ 
therefore all lie in $\mc{M}$ and enclose $o$ between them. Since they enclose $o$, they also enclose the connected component of 
$\Z^2\setminus\mc{M}$ which contains $o$. Thus this component is finite, as required.

A similar argument works if we assume the hypotheses of Proposition \ref{PRP:trichotomy} instead. 
\end{proof}\medskip

Combining Theorem \ref{BvsM} with Corollary \ref{COR:B_o_NS_EW}, Corollary \ref{COR:othercasesforB}, and Theorems \ref{thm:infiniteBorthant} and \ref{thm:infiniteB5}, we obtain the following (see Theorems \ref{prp:infiniteM2} and \ref{prp:infiniteM3} for estimates of the critical probabilities herein).
\begin{THM}
\label{prp:infiniteM1}
$\mc{G}$ has a gigantic $\mc{M}$-component almost surely [resp. with probability zero], in the following cases:
\begin{enumerate}
\item the model $(\WE\,\NS)$ with $0<p<1$; 
\item the model $(\NE\SW)$ with $1-p_c^{\smallOTSP}\le p\le p_c^{\smallOTSP}$ \quad[resp. $0\le p<1-p_c^{\smallOTSP}$ or $p_c^{\smallOTSP}<p\le 1$] 
\item the models $(\SWE\NE)$ and $(\NSEWalt\NE)$ with $1-p_c^{\smallOTSP}\le p<1$ \quad[resp. $0\le p<1-p_c^{\smallOTSP}$] 
\item the models $(\SWE\uparrow)$ and $(\NSEWalt\uparrow)$ with $1-p_c^{\smallFSOSP}\le p<1$ \quad[resp. $0\le p<1-p_c^{\smallFSOSP}$] 
\end{enumerate}
\end{THM}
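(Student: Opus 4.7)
The plan is to combine Theorem \ref{BvsM}, which says that under the hypotheses of Proposition \ref{PRP:trichotomy} or Corollary \ref{COR:trichotomy} the existence of an almost-sure gigantic $\mc{M}$-component is equivalent to $\nu(\mc{B}_o=\Z^2)>0$, with the various results on $\mc{B}_o$ that we have already established for these specific models. So the proof breaks into two routine tasks: verify the hypotheses of one of Proposition \ref{PRP:trichotomy}/Corollary \ref{COR:trichotomy} in every listed case, and then read off the value (positive or zero) of $\nu(\mc{B}_o=\Z^2)$.

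First I would verify the hypotheses. In all four items, $\mu(\mc{A}_e)>0$ for every $e\in\mc{E}$ (or at least for every $e\ne -e_2$, as needed by Corollary \ref{COR:trichotomy}) is trivial from the definitions with $p\in(0,1)$. For (a) $(\WE\,\NS)$, (b) $(\NE\SW)$, and (c) $(\SWE\NE)$ and $(\NSEWalt\NE)$, both local configurations contain an element of $\{\uparrow,\leftarrow\}$ and an element of $\{\downarrow,\rightarrow\}$, so $\mu(\mc{A}_{\smallNW})=\mu(\mc{A}_{\smallSE})=1$ and Proposition \ref{PRP:trichotomy} applies. For (d), the configuration $\{\uparrow\}$ in $(\SWE\uparrow)$ and the corresponding one in $(\NSEWalt\uparrow)$ lack $\rightarrow$ and $\downarrow$, so $\mu(\mc{A}_{\smallSE})<1$; here Proposition \ref{PRP:trichotomy} fails but both configurations still contain $\uparrow$, so $\mu(\mc{A}_{\smallNE})=\mu(\mc{A}_{\smallNW})=1$ and Corollary \ref{COR:trichotomy} applies instead.

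Next I would read off $\nu(\mc{B}_o=\Z^2)$ in each parameter regime. Case (a) uses Corollary \ref{COR:B_o_NS_EW}: $\nu(\mc{B}_o=\Z^2)=\nu(|\mc{B}_o|=\infty)=\theta_-$, which is strictly positive by Corollary \ref{cor:infiniteB2}. Case (b) uses the three regimes of Theorem \ref{thm:infiniteBorthant}: in the middle range $[1-p_c^{\smallOTSP},p_c^{\smallOTSP}]$ conclusion (a) of that theorem together with $\theta_->0$ gives $\nu(\mc{B}_o=\Z^2)>0$, while in the outer ranges conclusions (b) and (c) say that whenever $\mc{B}_o$ is infinite it is strictly contained in a half-plane cut off by a monotone blocking function, hence $\nu(\mc{B}_o=\Z^2)=0$. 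Case (c) is immediate from Corollary \ref{COR:othercasesforB}(a,b), and case (d) from Theorem \ref{thm:infiniteB5} for $(\SWE\uparrow)$ and from Corollary \ref{COR:othercasesforB}(c) for $(\NSEWalt\uparrow)$.

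Feeding each of these into Theorem \ref{BvsM} immediately yields the ``almost surely'' half of the statement whenever $\nu(\mc{B}_o=\Z^2)>0$. For the ``probability zero'' half, the event ``there is a gigantic $\mc{M}$-component'' is translation-invariant, so by the ergodicity of $\nu$ under $\Z^2$-shifts its probability is $0$ or $1$; Theorem \ref{BvsM} rules out $1$ when $\nu(\mc{B}_o=\Z^2)=0$, so it is $0$. There is no real obstacle beyond bookkeeping; the only care required is choosing Corollary \ref{COR:trichotomy} rather than Proposition \ref{PRP:trichotomy} for the models in (d), and noting that in the outer regimes of (b) the blocking alternatives (iii) and (iv) of Proposition \ref{PRP:trichotomy} are genuinely incompatible with $\mc{B}_o=\Z^2$, so that the ``$\Leftarrow$'' of Theorem \ref{BvsM} applies cleanly.
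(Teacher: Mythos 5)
Your proposal matches the paper's proof exactly: the paper itself simply states ``Combining Theorem \ref{BvsM} with Corollary \ref{COR:B_o_NS_EW}, Corollary \ref{COR:othercasesforB}, and Theorems \ref{thm:infiniteBorthant} and \ref{thm:infiniteB5}, we obtain the following,'' which is precisely your outline. You have in fact been slightly more careful than the paper in two respects that are worth noting: (i) you explicitly check which of Proposition \ref{PRP:trichotomy} or Corollary \ref{COR:trichotomy} applies in each case (in particular that for item (d) one must use the Corollary, since $\mu(\mc{A}_{\smallSE})<1$), and (ii) you supply the short ergodicity step needed to upgrade the negation of ``almost surely gigantic'' in Theorem \ref{BvsM} to ``probability zero,'' which the paper leaves implicit; one could equally derive it from Lemma \ref{lem:giganticimplications}(c) and translation invariance without invoking ergodicity directly. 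The only point you elide is that the endpoints $p\in\{0,1\}$ appearing in some of the ``probability zero'' ranges fall outside the regularity hypotheses (the models become deterministic there), but the conclusion is trivial in those degenerate cases.
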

Note that the regularity hypotheses of Corollary \ref{COR:trichotomy} by themselves do not rule out the possibility that there exist infinite $\mc{M}_x$ which fail to be gigantic components. For example, the model $(\SWE\,\WE)$ satisfies the hypotheses of Corollary \ref{COR:trichotomy} if we rotate it by $180^\circ$, but for each $x$, $\mc{M}_x$ is the horizontal line containing $x$. The techniques of the current paper also do not resolve the question of whether there can be multiple infinite (but not gigantic) $\mc{M}_x$ clusters for the model $(\NE\SW)$. This would follow if $\theta(p)=0$ whenever $0<p<1-p_c^{\smallOTSP}$ or $p_c^{\smallOTSP}<p<1$. We have another argument which we believe proves the latter, and hope to include it in a subsequent paper. 


The rest of this section will be spent giving elementary arguments that gigantic $\mc{M}$ components exist in several models, for $p$'s in various concrete intervals. Because of the structural results derived earlier, this automatically provides bounds on the critical percolation values $p_c^{\smallOTSP}$ and $p_c^{\smallFSOSP}$. As far as we are aware, these bounds improve on what is in the literature (see (\ref{OTSPbounds}) and (\ref{FSOSPbounds})), but the main purpose is to understand what can be deduced in an elementary way using the approach through degenerate environments. 


We will use the notion of an {\sl open cycle}, by which we mean a set of vertices $R$ which can be enumerated as a closed path $x_0x_1x_2\dots x_N$ such that $x_0=x_N$ and $x_{n+1}-x_n\in\mc{G}_{x_n}$ for $0\le n<N$. The idea will be to construct, for each $M\ge 1$, an open cycle $R_M$ that encloses the box $[-M,M]^2$. If $\mc{B}_x$ is infinite, then whenever $M>|x^{[1]}|\lor|x^{[2]}|$ we have $x\in[-M,M]^2$, so $\mc{B}_x$ intersects $R_M$. This implies that $R_M\subset\mc{B}_x$. Likewise $\mc{C}_x$ is infinite (assuming (\ref{standinghypothesis})), so intersects $R_M$, from which it follows that $R_M\subset\mc{C}_x$. Thus $R_M\subset\mc{M}_x$ for all sufficiently large $M$, implying that $\mc{M}_x$ is infinite. Moreover, the construction implies that all connected components of $\Z^2\setminus\mc{M}_x$ are contained within some $R_M$, so must be finite. In other words, we have proved the following:
\begin{LEM}
\label{lem:cycleing}
Assume $d=2$ and that $\theta_+=1$. Assume further that open cycles $R_M$ can be constructed enclosing arbitrarily large boxes $[-M,M]^2$. Then $\mc{M}_x$ is a gigantic $\mc{M}$ component whenever $\mc{B}_x$ is infinite. 
\end{LEM}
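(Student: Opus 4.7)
The whole argument will hinge on one structural observation: because $R_M$ is an open cycle, any two of its vertices are connected by open directed paths in \emph{both} directions (simply traverse the cycle around either way). So as soon as $\mc{B}_x$ (resp.~$\mc{C}_x$) meets $R_M$ at a single vertex, it must contain all of $R_M$. The strategy is therefore to (i) force $\mc{B}_x$ and $\mc{C}_x$ to intersect $R_M$ for all large $M$, (ii) swallow the cycle into $\mc{M}_x$, and (iii) read off giganticity because each $R_M$ acts as a barrier.

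Concretely, I would fix $x$ with $|\mc{B}_x|=\infty$ and take $M$ large enough that $x\in (-M,M)^2$. Any $y\in\mc{B}_x$ is linked to $x$ by a directed path whose interior vertices also lie in $\mc{B}_x$ (each of them still reaches $x$), so $\mc{B}_x$ is nearest-neighbour connected inside $\Z^2$. Since it is infinite and contains $x$ strictly enclosed by the closed loop traced by $R_M$, the connected set $\mc{B}_x$ must cross $R_M$; pick $y_0\in\mc{B}_x\cap R_M$. Then for every $z\in R_M$ we get $z\to y_0$ by following the cycle, and $y_0\to x$ by choice of $y_0$, so $z\in\mc{B}_x$. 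Thus $R_M\subset \mc{B}_x$. Because $\theta_+=1$, $\mc{C}_x$ is also infinite and the symmetric argument gives $R_M\subset \mc{C}_x$. Therefore $R_M\subset \mc{M}_x$ for all sufficiently large $M$, and in particular $\mc{M}_x$ is infinite.

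For giganticity, let $y\in\Z^2\setminus\mc{M}_x$ and choose $M$ so large that $y\in [-M,M]^2$ and $R_M\subset\mc{M}_x$. Any nearest-neighbour path from $y$ to infinity in $\Z^2$ must exit $[-M,M]^2$ and therefore intersect $R_M$, but $R_M\subset \mc{M}_x$, so no such path exists inside $\Z^2\setminus\mc{M}_x$. Hence the $\Z^2$-connected component of $y$ in $\Z^2\setminus\mc{M}_x$ is trapped inside $R_M$ and is finite.

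The only non-obvious ingredient is step (i): checking that an infinite $\mc{B}_x$ really must hit $R_M$. This combines two ingredients that are each elementary but easy to gloss over, namely that $\mc{B}_x$ inherits $\Z^2$-connectedness from its internal directed paths, and that an open cycle enclosing $[-M,M]^2$ separates its interior from infinity in the usual discrete Jordan-curve sense. I expect this to be the only place where a little care with the definitions is needed; everything else follows mechanically from the cycle traversal trick.
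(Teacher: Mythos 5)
Your proof is correct and follows essentially the same route as the paper's (terse) argument: force the infinite connected sets $\mc{B}_x$ and $\mc{C}_x$ to meet the cycle, absorb all of $R_M$ into $\mc{B}_x$ and $\mc{C}_x$ by traversing the cycle around in the appropriate direction, conclude $R_M\subset\mc{M}_x$, and then use the cycles as barriers that trap the components of $\Z^2\setminus\mc{M}_x$. You have simply made explicit the connectivity of $\mc{B}_x$/$\mc{C}_x$ and the discrete Jordan-curve step that the paper leaves implicit.
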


An easy application is to give an alternate proof of the existence of a gigantic $\mc{M}$ component in the $(\WE\,\NS)$ model:
\begin{proof}[Alternate proof of Theorem \ref{prp:infiniteM1}]
For fixed $M\ge 1$ we can build an outward spiral path from the following pieces. 
\begin{itemize}
\item Follow a SW path from $y_0=(-M,M)$ till it reaches a point $y_1$ with $y_1^{[2]}=-M$. 
\item Then follow a SE path till it reaches a $y_2$ with $y_2^{[1]}=M$. 
\item Then follow a NE path till it reaches a $y_3$ with $y_3^{[2]}=M$. 
\item Then follow a NW path till it reaches a $y_4$ with $y_4^{[1]}=-M$. 
\item Then follow a SW path till it reaches a $y_5$ with $y_5^{[2]}=-M$. 
\item Then follow a SE path till it reaches a $y_6$ with $y_6^{[1]}=M$. 
\item Then follow a NE path till it reaches a $y_7$ with $y_7^{[2]}=M$. 
\end{itemize}
It is possible that for some large $M$, this spiral closes in on itself, in which case we've produced the desired open cycle. If it doesn't, then by Proposition \ref{prp:infiniteB1}, for all $M$ sufficiently large, there exists $x\in [-M,M]^2$ with $\mc{B}_x$ infinite for the submodel $(\WE\,\uparrow)$.   By Proposition \ref{prp:infiniteB1}(b)
there is an infinite open path to $x$, which must cross our spiral at some point $z$  between $y_4$ and $y_7$, and then again at some $z'$ between $y_0$ and $y_3$. Following the spiral from $z'$ around to $z$, and then moving from $z$ to $z'$ along the path in $\mc{B}_x$ produces the cycle $R_M$. Now apply Lemma \ref{lem:cycleing}.
\end{proof}

As remarked below (\ref{FSOSPbounds}), the following theorem improves on the existing bound $p_c^{\smallFSOSP}\ge 0.3205$.

\begin{THM}
\label{prp:infiniteM2}
For the model $(\SWE\uparrow)$, if $p^3-p^2+2p-1>0$ and $p<1$ then $\mc{G}$ has a gigantic $\mc{M}$-component, almost surely. In consequence, $p_c^{\smallFSOSP}\ge 0.4311$.
\end{THM}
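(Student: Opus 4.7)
By Lemma \ref{lem:cycleing}, since $\theta_+=1$ (apply Lemma \ref{lem:criteriontobeinfinite} with $V=\{e_2\}$), it suffices to show that for each $M\ge 1$ there is, with positive probability, an open cycle enclosing $[-M,M]^2$; once this is established, the tail event that such cycles exist for arbitrarily large $M$ has $\nu$-probability one by a zero-one argument, which combined with Lemma \ref{lem:cycleing} yields the gigantic $\mc{M}$-component.

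The plan is to build the cycle from four legs, in the spirit of the alternate proof of Theorem \ref{prp:infiniteM1} for $(\WE\,\NS)$. The top and bottom legs are horizontal segments that traverse runs of $\SWE$ sites using the $\leftarrow$ and $\rightarrow$ arrows; these are the easy pieces. The left (ascending) leg is realised as a path in the coalescing $(\uparrow,\rightarrow)$-subnetwork (or $(\uparrow,\leftarrow)$-subnetwork), whose individual paths go to infinity and coalesce by Lemma \ref{lem:coalescence1}. The right (descending) leg is the only non-trivial piece: it is a greedy path that steps down from any $\SWE$ site whose immediate neighbour below is also $\SWE$, and otherwise detours horizontally through $\SWE$ sites to find a column in which descent is again possible.

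The essential step is to analyse the descending leg as a biased random walk in the vertical coordinate and to verify that the inequality $p^3-p^2+2p-1>0$ is precisely the threshold at which this walk acquires strictly negative (downward) drift. In each ``super-step'' the walker either descends directly (probability $p$), completes a lateral detour of geometric length before descending, or in the worst case is forced into an $\uparrow$ site and pushed back up one row. Aggregating these cases together with their horizontal and vertical displacements expresses the expected vertical drift as an explicit rational function of $p$, whose sign changes exactly when $p^3-p^2+2p-1$ does. Under this condition, the law of large numbers guarantees with positive probability that the descending leg exits below $[-M,M]^2$ within a bounded random horizontal displacement, and the four legs can then be glued together into the required open cycle enclosing the box.

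The bound $p_c^{\smallFSOSP}\ge 0.4311$ follows by chaining Theorems \ref{BvsM} and \ref{thm:infiniteB5}(b): existence of a gigantic $\mc{M}$-component for every $p>p^\ast$, where $p^\ast\approx 0.5689$ is the unique real root of $p^3-p^2+2p-1$, forces $p\ge 1-p_c^{\smallFSOSP}$ for all such $p$, hence letting $p\downarrow p^\ast$ yields $p_c^{\smallFSOSP}\ge 1-p^\ast\approx 0.4311$. The main technical obstacle is specifying the detour rule for the descending leg so that the drift calculation produces exactly the polynomial $p^3-p^2+2p-1$; one must be careful to choose a detour protocol (for example, alternating between right- and left-detours, or restricting to a right-only rule coupled to a genuinely independent process) under which the walk visits a sequence of sites whose types are independent, so that the drift can be computed without double-counting revisited vertices.
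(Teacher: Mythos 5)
Your overall plan—show $\theta_+=1$, build arbitrarily large open cycles, invoke Lemma~\ref{lem:cycleing}, then chain with Theorem~\ref{thm:infiniteB5} to bound $p_c^{\smallFSOSP}$—is the right one and matches the paper. However, several pieces of the proposal are either wrong or leave the hard part unproved.

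First, $V=\{e_2\}$ does \emph{not} verify the hypothesis of Lemma~\ref{lem:criteriontobeinfinite}: at a $\SWE$ site the local environment is $\{\leftarrow,\downarrow,\rightarrow\}$, which is disjoint from $\{e_2\}$, so $\mu(\{A:A\cap V\ne\emptyset\})=1-p<1$. You need a pair such as $V=\{e_1,e_2\}$, which meets both $\SWE$ (via $e_1$) and $\uparrow$ (via $e_2$).

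Second, and more seriously, the ``top and bottom legs as horizontal segments traversing runs of $\SWE$ sites'' cannot be a component of a cycle enclosing $[-M,M]^2$: a horizontal run of consecutive $\SWE$ sites has geometric length, so the probability it spans width $2M$ is $p^{2M}\to 0$. There is no horizontal piece in the paper's construction; the pieces that travel ``across the top'' are themselves SEoA paths, which only drift in the desired direction and whose endpoints you do not control. The paper copes with this by building a genuine spiral (SEoA, NE, NW, SWoA, SEoA, NE), arranging via a decreasing family of coalescing SEoA paths that the box lies above the bottom leg, and—crucially—closing the cycle using a separate doubly-infinite monotone path supplied by Proposition~\ref{prp:infiniteB1}(b) applied to the $(\WE\,\uparrow)$ subnetwork when the spiral fails to self-intersect. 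Your proposal omits both the precise construction that makes the box end up inside and any mechanism to close the cycle when the spiral does not meet itself.

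Third, the drift analysis is the heart of the argument and you have only gestured at it. Your ``super-step'' description (descend directly with probability $p$; lateral detour of geometric length before descending; worst case pushed back up one row) does not match the actual SEoA walk: the super-step in the paper ends with exactly one sideways step, and an upward excursion can be of \emph{any} geometric length, not one row. The correct bookkeeping gives $\nu(W=-k)=p^{k+1}(1-p)$ and $\nu(W=k)=p(1-p)^k$, hence $\mE[W]=(1-p)/p-p^2/(1-p)=-(p^3-p^2+2p-1)/(p(1-p))$, and it is not evident that your super-step decomposition produces the same rational function. You also flag the independence/double-counting problem but do not resolve it; the paper avoids it because the SEoA walk only examines each site once (it never moves left), so no coupling trick is needed. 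As written, the proof has a genuine gap at each of these three points, even though the high-level architecture (cycles, drift, duality with Theorem~\ref{thm:infiniteB5}) is sound and the polynomial threshold you name is the correct one.
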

\proof
We will again show that there is a gigantic $\mc{M}$-component by constructing arbitrarily large cycles $R_M$. 
Again our network contains the network $(\leftrightarrow\,\uparrow)$ 
so by Lemma \ref{lem:exist_inf_B} and Proposition \ref{prp:infiniteB1} 
there is an $x$ such that $\mc{B}_x$ contains a semi-infinite path $\dots x_2x_1x_0$ to $x$ such that $x^{[2]}$ is monotone. Without loss of generality, we take this $x=o$. 

As in the alternate proof of Theorem \ref{prp:infiniteM1}, we may construct NE and NW paths in our network from arbitrary initial vertices. But we will also need paths that play the role of SE or SW paths in that proof. Here these will be paths that only move SE on average, or SW on average. So define the {\sl SEoA} path from a $\SWE$ vertex to go $\downarrow$ if this leads to another $\SWE$ vertex, and otherwise to go $\rightarrow$. From a $\uparrow$ vertex, the path of course goes $\uparrow$. We define a {\sl SWoA} path similarly.  By construction, the {\sl SEoA} path never takes a west step and the {\sl SWoA} path never takes an east step. Our first task is to see when these paths actually do -- on average -- move in the desired direction.

Let $z$ be the initial point of the SEoA path, and let $W$ be the vertical distance travelled before moving sideways. So $W=-k\le 0$ means that $z$ as well as the $k$ points directly below it are $\SWE$, while the point below them is $\uparrow$. Likewise, $W=k\ge 1$ means that $z$ as well as the $k-1$ points directly above it are $\uparrow$, while the point above them is $\SWE$. Thus
$$
\mE[W]=-\sum_{k\ge 0}kp^{k+1}(1-p)+\sum_{k\ge 1}kp(1-p)^k=-\frac{p^2}{1-p}+\frac{1-p}{p}=-\frac{p^3-p^2+2p-1}{p(1-p)}.
$$
In particular, if $p^3-p^2+2p-1>0$, then the SEoA paths (resp. SWoA paths) drift SE (resp. SW) on average. 

So assume $p^3-p^2+2p-1>0$, and construct the cycle as follows. Let $z^i=z^i_0 z^i_1\dots$ be the SEoA path 
starting from $(-i,M)$ and ending at $(M,k)$ for some $k$. Note that two such paths coalesce if/when they meet. Because $\mE[W]<0$, the 
probability that $[-M,M]^2$ lies entirely above this path converges to 1 as $i\to\infty$. So we may almost 
surely find an $i$ so that this is this the case. In fact, we may find an increasing sequence $i(0), i(1), \dots$ 
such that $z^{i(k)}$ lies entirely above $z^{i(k+1)}$ for $k\ge 0$, and $[-M,M]^2$ lies entirely above each $z^{i(k)}$. 

We may now build a spiral path as follows: 
\begin{itemize}
\item From $y_1=z^{i(0)}_0$ (which has $y^{[2]}_1=M$), follow the SEoA path till it reaches a point $y_2$ with $y_2^{[1]}=M$. By construction, $[-M,M]^2$ lies above this path.
\item From $y_2$ follow the NE path till it reaches a point $y_3$ with $y_3^{[2]}=M$.
\item From $y_3$ follow the NW path till it reaches a point $y_4$ with $y_4^{[1]}=-M$.
\item From $y_4$ follow the SWoA path till it hits some path $z^{i(k)}$ at a point $y_5$. It must do so because eventually it lies below the line $x^{[2]}=M$, so $z^{i(k)}$ will cross it for $k$ sufficiently large. 
\item From $y_5$ follow the SEoA path $z^{i(k)}$ till it reaches a point $y_6$ with $y_6^{[1]}=M$.
\item From $y_6$ follow the NE path till it reaches a point $y_7$ with $y_7^{[2]}=M$.
\end{itemize}
It is possible that this spiral closes in on itself, in which case we've produced the desired cycle. If it doesn't, recall that we have an infinite path $\dots x_2x_1x_0$ leading to $o$ in $\mc{B}_o$, with $x_i^{[2]}$ monotone decreasing. This must cross our spiral at some point $x'$  between $y_4$ and $y_7$, and then again at some point $x''$ between $y_1$ and $y_3$. Following the spiral from $x''$ around to $x'$, and then moving from $x'$ to $x''$ along the path in $\mc{B}_o$ produces the cycle $R_M$. Now apply Lemma \ref{lem:cycleing}.

The estimate on $p_c^{\smallFSOSP}$ comes from computing the unique root of the increasing function $p^3-p^2+2p-1$.  By Lemma \ref{lem:giganticimplications} we will have some $\mc{B}_x=\Z^2$ for $p$ above this root, and can then apply Theorem \ref{thm:infiniteB5}.
\qed

As remarked below (\ref{OTSPbounds}), the following theorem improves on the existing bound $p_c^{\smallOTSP}\ge 0.5$ and is the first result we are aware of separating oriented from non-oriented percolation on the triangular lattice.

\begin{THM}
\label{prp:infiniteM3}
For the model $(\NE \SW)$, if $p^3+2p-1\ge 0$ and $(1-p)^3+2(1-p)-1\ge 0$ then $\mc{G}$ has a gigantic $\mc{M}$-component almost surely. In consequence $p_c^{\smallOTSP}\ge 0.5466$.
\end{THM}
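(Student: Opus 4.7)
The plan is to mirror Theorem \ref{prp:infiniteM2}: by Lemma \ref{lem:cycleing}, and since $\theta_+=1$ holds here via Lemma \ref{lem:criteriontobeinfinite} with $V=\{e_1,-e_2\}$ (orthogonal and intersecting both $\NE$ and $\SW$), it suffices to construct for each $M$ an open cycle $R_M$ enclosing $[-M,M]^2$. The bound $p_c^{\smallOTSP}\ge 0.5466$ then follows because a gigantic $\mc{M}$-component forces $\mc{B}_y=\Z^2$ on the component (Lemma \ref{lem:giganticimplications}(c)), which by Theorem \ref{thm:infiniteBorthant}(b) is incompatible with $p>p_c^{\smallOTSP}$; since $0.5466$ is the larger root of the two cubics in the hypothesis, this gives the stated bound.

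Four types of directed paths will play the roles that SEoA, SWoA, NE and NW played in the earlier proof. The model $(\NE\SW)$ admits deterministic SE paths (at $\NE$ go $\rightarrow$, at $\SW$ go $\downarrow$, drift $(p,-(1-p))$) and deterministic NW paths (at $\NE$ go $\uparrow$, at $\SW$ go $\leftarrow$, drift $(-(1-p),p)$); both drift in the stated direction for every $p\in(0,1)$ with no further condition, and serve as the ``true'' paths. The NE and SW directions cannot be walked deterministically (a $\SW$ site has no NE-consistent outgoing edge, and vice versa), so I would introduce NEoA and SWoA paths by the same greedy/tie-breaking philosophy used for SEoA in $(\SWE\uparrow)$: at each $\NE$ site, prefer whichever NE-move leads to another $\NE$ site, transitioning into a $\SW$ site only when blocked, and symmetrically at $\SW$ sites. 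Carrying out the excursion calculation that produced $-p^2/(1-p)+(1-p)/p$ in Theorem \ref{prp:infiniteM2}, with the corresponding bookkeeping for $(\NE\SW)$, should give a mean displacement whose sign is governed by $p^3+2p-1$; by the $p\leftrightarrow 1-p$ symmetry (rotation by $180^\circ$ swaps $\NE$ and $\SW$) the SWoA drift condition becomes $(1-p)^3+2(1-p)-1>0$.

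With these four path types and an infinite monotone-in-$y$ path $\dots x_2x_1x_0=o$ in $\mc{B}_o$, supplied by Proposition \ref{prp:infiniteB2} applied to the embedded sub-model $(\NE\leftarrow)\subset(\NE\SW)$ (the embedding is the one used in the proof of Corollary \ref{cor:infiniteB2}), I would run the spiral of Theorem \ref{prp:infiniteM2}: start at $(-M,M)$; follow SE to the east side $x^{[1]}=M$; NEoA up to the top $x^{[2]}=M$; NW to the west side $x^{[1]}=-M$; and SWoA downward until it crosses one of the earlier SE copies. If this spiral closes on itself we are done; otherwise the monotone $\mc{B}_o$-path crosses the spiral at one point above $o$ and one below, and splicing at those crossings produces $R_M$ exactly as in the final paragraph of the proof of Theorem \ref{prp:infiniteM2}.

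The main obstacle is making the NEoA/SWoA constructions both loop-free and yield precisely the cubic conditions. In $(\SWE\uparrow)$ loop-freeness was automatic because every $\uparrow$-site has a unique outgoing edge, creating the natural excursion structure, whereas in $(\NE\SW)$ every site has two outgoing edges and the obvious greedy rules admit $2$-cycles between adjacent $\NE$ and $\SW$ vertices. The tie-breaking must therefore be chosen to destroy such short cycles while still making the excursion analysis produce the polynomial $p^3+2p-1$ rather than a different (e.g.\ purely Markov) drift expression that would only give NE drift for $p>\tfrac12$.
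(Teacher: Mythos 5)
Your outline has the right skeleton (reduce to Lemma \ref{lem:cycleing}; build a cycle enclosing $[-M,M]^2$; deduce the percolation bound from Theorem \ref{thm:infiniteBorthant}), and the ``In consequence'' step is fine. But the main construction has a genuine gap, and you flag it yourself at the end: the NEoA/SWoA paths do not exist in $(\NE\,\SW)$ in the same sense that SEoA/SWoA existed in $(\SWE\,\uparrow)$. The problem is worse than a bad tie-break producing $2$-cycles. In $(\SWE\,\uparrow)$ every vertex has at least one outgoing edge that is NE-compatible ($\rightarrow$ from $\SWE$, $\uparrow$ from $\uparrow$), so one can always \emph{steer} NE and the only freedom is how long to travel vertically. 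In $(\NE\,\SW)$ a $\SW$ vertex has outgoing edges $\downarrow,\leftarrow$ only, \emph{neither} of which is NE-compatible; once you are at a $\SW$ site you must move SW, full stop. So there is no greedy single-path NEoA rule to tune, and there is no ``excursion calculation'' analogous to $\mE[W]=-p^2/(1-p)+(1-p)/p$ that would produce $p^3+2p-1$. Pushing through the heuristic you describe, the natural i.i.d.\ drift computation would yield a condition like $p>2/3$, not the stated cubic.

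The paper sidesteps NEoA paths entirely, and the cubic has a different provenance. Instead of a single on-average-NE path, one uses the \emph{cluster} $\mc{N}_y\subset\mc{B}_y$ of points reaching $y$ via the embedded model $(\NE\,\leftarrow)$, i.e.\ using only the moves $\uparrow,\leftarrow,\rightarrow$. By Proposition \ref{prp:infiniteB2} the lower boundary of $\mc{N}_y$ is, on the event of survival, a random walk $L_n^0$ with step law $\nu(L_{n+1}^0=l+j\mid L_n^0=l)=(1-p)p^{1-j}$ for $j\le 0$ and $p(1-p)^j$ for $j>0$, hence mean speed $(1-p)/p-p^2/(1-p)$. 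One then launches the \emph{deterministic} SE path from a point to the left of $\mc{N}_o$; letting $W_n$ be the first coordinates of its downward steps, $W_n$ is a random walk with mean increment $p/(1-p)$. The SE path a.s.\ hits $\mc{N}_o$ provided $\mE[\Delta W_n]\ge\mE[\Delta L_n^0]$, i.e.
\[
\frac{p}{1-p}\ \ge\ \frac{1-p}{p}-\frac{p^2}{1-p}\ \Longleftrightarrow\ p^3+2p-1\ge 0,
\]
and the $p\leftrightarrow 1-p$ symmetry gives the other cubic via $\tilde{\mc{N}}_y$ (moves $\downarrow,\leftarrow,\rightarrow$). Note this comparison requires a nontrivial accept/reject coupling argument, since $W_n$ and $L_n^0$ explore overlapping parts of the environment and are not independent. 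The cycle $R_M$ is then built from only four pieces: the deterministic SE path from $(\tilde j,-M)$ into $\mc{N}_{(j,M)}$, a path up inside $\mc{N}_{(j,M)}$ to $(j,M)$, the deterministic NW path into $\tilde{\mc{N}}_{(\tilde j,-M)}$, and a path down inside $\tilde{\mc{N}}_{(\tilde j,-M)}$ back to $(\tilde j,-M)$. The infinite clusters $\mc{N}$ and $\tilde{\mc{N}}$ thus play the role your NEoA/SWoA paths were meant to play, but they are set-valued rather than path-valued, which is precisely what makes the construction go through where a greedy path cannot.
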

\proof
This network includes the network $(\NE,\leftarrow)$ 
so by Proposition \ref{prp:infiniteB2} there 
are infinite $\mc{B}$'s. In fact, let $\mc{N}_y\subset\mc{B}_y$ denote the cluster of points from which $y$ can be reached, using only steps $\uparrow, \leftarrow, \rightarrow$. Let $L_n(y)$ and $U_n(y)$ be the infimum and supremum of $l$ with $(l,y^{[2]}-n)\in\mc{N}_y$. It follows from Proposition \ref{prp:infiniteB2} that $A_y=\{|L_n(y)|<\infty \, \forall \,n\}$ has probability $>0$. Without loss of generality we (for now) take $y=o$ and write $L_n=L_n(o)$. 

Consider the SE path from some point $z$ lying to the left of $\mc{N}_o$. Our first task will be to determine what choices of $p$ imply that, a.s. on the event $A_o$, this path hits $\mc{N}_o$. Without loss of generality, $z=(w,0)$ for some $w<L_0$, and $\mc{G}_z=\SW$.

Recall that on the event $A_o$, $L_n$ agrees with the path of a random walk $L_n^0$. We may construct $L^0_n$ as follows: from $(L_n^0,-n)$ look down one vertex. If $\mc{G}_{(L_n^0,-(n+1))}=\NE$ then $L_{n+1}^0\le L_n^0$, and is in fact the smallest value such that $\mc{G}_{(j,-(n+1))}=\NE$ for $L_{n+1}^0\le j\le L_n^0$. On the other hand, if  $\mc{G}_{(L_n^0,-(n+1))}=\leftarrow$, then $L_{n+1}^0>L_n^0$, and is in fact the smallest $j>L_n^0$ such that $\mc{G}_{(j,-(n+1))}=\NE$. As observed in the proof of Proposition \ref{prp:infiniteB2}, 
$$
\nu(L_{n+1}^0=l+j\mid L_n^0=l)=
\begin{cases}
(1-p)p^{1-j}, &j\le 0\\
p(1-p)^j,&j>0
\end{cases},
\qquad
\mE[\Delta L_n^0]= \frac{1-p}{p}-\frac{p^2}{1-p}.
$$
Set $W_0=w$. Let $W_1, W_2, \dots$ be the first coordinates of successive vertices at which the SE path moves downward. In other words, the downwards steps are from $(W_n, -n)$ to $(W_n, -(n+1))$. Our object is to show that with probability 1 there exists an $n$ with $W_n\ge L_n^0$. 

$W_n$ is itself a random walk, with $\nu(W_{n+1}=l+j\mid W_n=l)=(1-p)p^j$ for $j\ge 0$, and $\mE[\Delta W_n]=p/(1-p)$. We see that $\mE[\Delta W_n]\ge \mE[\Delta L_n^0]$ provided
$$
\frac{p}{1-p}\ge \frac{1-p}{p}-\frac{p^2}{1-p}\quad
\Longleftrightarrow\quad
p^3+2p-1\ge 0.
$$
We assume, in what follows, that this inequality holds. 
In particular, this is true for $p\ge 0.4534$; If $W_n$ and $L^0_n$ were independent, the desired conclusion would follow immediately. Because these walks are actually not quite independent, we need to be slightly more careful. 

When $W_n=j<l=L_n^0$, we have
$$
\nu(W_{n+1}=j', L_{n+1}^0=l'\mid W_n=j, L_n^0=l)=\nu(W_{n+1}=j'\mid W_n=j)\cdot \nu(L_{n+1}^0=l'\mid L_n^0=l),
$$
provided $j'< l$ and $j'< l'-1$, since the two events in question depend on disjoint parts of the environment. Let $C(j,l)$ be the set of $(j',l')$ which violate the above condition. Then we can describe the evolution of the Markov chain $(W_n,L_n^0)$ as follows: from $(W_n,L_n^0)=(j,l)$ propose a move to a $(j',l')$ chosen based on $W_{n+1}$ and $L_{n+1}^0$ evolving independently. If $(j',l')\notin C(j,l)$ the move is accepted. Otherwise the move is rejected, and replaced by a move to some point of $C(j,l)$ chosen according to the required law. The fact that $\mE[\Delta W_n]\ge \mE[\Delta L_n^0]$ implies that with probability 1, this chain will eventually encounter a rejected move. 

What moves in $C(j,l)$ can replace a rejected move? There are three types -- either $j'\ge l'$ (if $\mc{G}_{(i,-(n+1))}=\NE$ for $j\le i\le l$), or $j'+1=l'\le l$ (if $\mc{G}_{(l,-(n+1))}=\NE$ and exactly one $j\le i<l$ has $\mc{G}_{(i,-(n+1))}=\SW$), or $j'=l<l'$ (if $\mc{G}_{(l,-(n+1))}=\SW$ and all $j\le i<l$ have $\mc{G}_{(i,-(n+1))}=\NE$). In the first case, $W_{n+1}\ge L_{n+1}^0$ and the paths cross. In the second case, there is a high probability of crossing on the next step. It is then an easy calculation to show that there is an $\epsilon>0$ such that 
$$
\nu(\text{$W_{n+1}\ge L_{n+1}^0$ or $W_{n+2}\ge L_{n+2}^0$}\mid W_n=j, L_n^0=l)\ge\epsilon \nu((W_{n+1},L^0_{n+1})\in C(j,l)\mid W_n=j, L_n^0=l).
$$
Thus after at most finitely many rejected moves, we will eventually find one leading to the desired intersection. We have therefore shown that with probability 1, there exists an $n$ with $W_n\ge L_n^0$, as required. 

Suppose that $p^3+2p-1\ge 0$. We claim that with probability 1 there is a $j>M$ such that $\mc{N}_{(j,M)}$ is infinite and $[-M,M]^2$ lies to the left of $\mc{N}_{(j,M)}$. To see this, let $\epsilon=\nu(|\mc{N}_{(j,M)}|=\infty)>0$. Because $\mE[\Delta L_n^0]>-\infty$ there is a $j_0>M$ such that 
$$
\nu(\text{$|\mc{N}_{(j_0,M)}|=\infty$ and $[-M,M]^2$ lies to the left of $\mc{N}_{(j_0,M)}$})\ge\epsilon/2.
$$
Search down from $(j_0,M)$ to see if this event occurs. Either the search succeeds, or it fails after examining a finite number $k_0$ of vertices. If it fails, we may likewise find a $j_1>j_0$ such that 
$$
\nu(\text{$|\mc{N}_{(j_1,M)}|=\infty$ and $[-(M+j_0+k_0),M+j_0+k_0]^2$ is left of $\mc{N}_{(j_1,M)}$})\ge\epsilon/2.
$$
Search down from $(j_1,M)$ to see if this event occurs. Note that this search will not involve any of the $k_0$ vertices already examined, so it succeeds or fails independently of what has come before. Either the search succeeds, or it fails after examining a finite number $k_1$ of vertices, and the process can now be repeated. Eventually the search must succeed, giving the claimed value $j$.

Suppose also that $(1-p)^3+2(1-p)-1\ge 0$. Define $\tilde{\mc{N}}_y$ as above, but using moves $\downarrow, \leftarrow, \rightarrow$. Then by symmetry, a similar argument shows that there is a $\tilde j<-M$ such that $\tilde{\mc{N}}_{(\tilde j,-M)}$ is infinite, and $[-M,M]^2$ lies to the right of $\tilde{\mc{N}}_{(\tilde j,-M)}$. As well, NW paths from the right of $\tilde{\mc{N}}_{(\tilde j,-M)}$ will a.s. intersect this set. 

Finally, we can construct our cycle $R_M$. Starting from $(\tilde j,-M)$ follow the SE path till it intersects $\mc{N}_{(j,M)}$. By definition of 
$\mc{N}_{(j,M)}$ we may then follow a path in this set up to $(j,M)$, and by construction $[-M,M]^2$ lies to the left of this path. Now follow a NW path till it intersects $\tilde{\mc{N}}_{(\tilde j,-M)}$. By definition we may follow a path in this set up to $(\tilde j,-M)$, which closes up the desired cycle. Now apply Lemma \ref{lem:cycleing}. 

The estimate on $p_c^{\smallOTSP}$ comes from computing the unique root $r$ of the increasing function $p^3+2p-1$. We will have some $\mc{B}_x=\Z^2$ for $p$ between $r$ and $1-r$, and can then apply Theorem \ref{thm:infiniteBorthant}.\qed

\medskip


\section{Model summary}
\label{sec:modelsummary}
Here we summarize the results of earlier sections as applied to 2-dimensional 2-valued environments.  In each case the first possibility is assumed to have probability $p\in (0,1)$, and the second $1-p$.

\vfill
\begin{table}
\begin{center}
\begin{tabular}{l|c|c|c|c|c}
Model & $\theta_+$ & $\theta_-$ & $\theta$ & Notes 
\\
\hline
$\uparrow$ $\cdot$ & 0&0&0&\vphantom{${I^I}^I$} 1-dimensional \\
$\leftrightarrow$ $\cdot$ & 0&0&0& 1-dimensional \\
$\NE$ $\cdot$ & $>0$($p>p_c^{\smallNE}$) &  {\sl see (i)} &0& oriented site percolation; {\sl ii}   \\
$\SWE$ $\cdot$ & $>0$($p>p_c^{\smallSWE}$) &  {\sl see (i)} &0& partially-oriented site perc.; {\sl ii} \\
$\NSEWalt$ $\cdot$ & $>0$($p>p_c^{\smallNSEWalt}$) &    {\sl see (i)} & {\sl see (i)} & site percolation;  {\sl ii} \\
\hline
$\uparrow$ $\rightarrow$ & 1 & 0 & 0 &\vphantom{${I^I}^I$} coalescing RW;  {\sl iii} \\
$\uparrow$ $\downarrow$ & 0 & 0 & 0 & 1-dimensional \\
$\leftrightarrow$ $\uparrow$ & 1 & $>0$ & 0 &  {\sl iii, iv, v${}^1$} \\
$\leftrightarrow$ $\rightarrow$ & 1 & 1 & 0 & 1-dimensional \\
$\leftrightarrow$ $\updownarrow$ & 1 & $>0$ & $>0$ & {\sl iii, iv, v${}^2$,vi${}^1$} \\
\hline
$\NE$ $\uparrow$ & 1 & $1$ & $0$ &\vphantom{${I^I}^I$} {\sl iii, iv} \\
$\NE$ $\NW$ & 1 & $1$ & $0$ &  {\sl iii, iv, v${}^1$} \\
$\NE$ $\leftrightarrow$ & 1 & $1$ & $0$ & {\sl iii, iv, v${}^1$} &\\
$\NE$ $\leftarrow$ & 1 & $>0$  & $0$ & {\sl iii, iv} \\
$\NE$ $\SW$ & 1 & $>0$ & phase trans. & {\sl iii, iv, v${}^1$} ($p<1-p_c^{\smallOTSP}$ or $>p_c^{\smallOTSP}$) \\
&&&& {\sl iii, iv, v${}^2$, vi${}^1$} ($1-p_c^{\smallOTSP}\le p\le p_c^{\smallOTSP}$) \\
\hline
$\SWE$ $\downarrow$ & 1 & $1$ & $0$ &\vphantom{${I^I}^I$} {\sl iii, iv, v${}^1$} \\
$\SWE$ $\rightarrow$ & 1 & $1$ & $0$ & {\sl iii, iv} \\
$\SWE$ $\uparrow$ & 1 & $>0$ & phase trans. &  {\sl iii, iv, v${}^1$} ($p<1-p_c^{\smallFSOSP}$) \\
&&&& {\sl iii, iv, v${}^2$, vi${}^1$} ($p\ge 1-p_c^{\smallFSOSP}$) \\
$\SWE$ $\leftrightarrow$ & 1 & $1$ & $1$ & {\sl iii, iv, v${}^1$, vi${}^3$}  \\
$\SWE$ $\updownarrow$ & 1 & $1$ & $1$ & {\sl iii, iv, v${}^2$, vi${}^2$} \\
$\SWE$ $\NE$ & 1 & $1$ & phase trans. & {\sl iii, iv, v${}^1$} ($p<1-p_c^{\smallOTSP}$) \\
&&&& {\sl iii, iv, v${}^2$, vi${}^2$} ($p\ge 1-p_c^{\smallOTSP}$) \\
$\SWE$ $\SW$ & 1 & $1$ & $0$ & {\sl iii, iv, v${}^1$} \\
$\SWE$ $\NSE$ & 1 & $1$ & $1$ & {\sl iii, iv, v${}^2$, vi${}^2$} \\
$\SWE$ $\NWE$ & 1 & $1$ & $1$ &  {\sl iii, iv, v${}^2$, vi${}^2$}  \\
\hline
$\NSEWalt$ $\uparrow$ & 1 & $1$ & phase trans. &\vphantom{${{|^|}^|}^|$} {\sl iii, iv, v${}^1$} ($p<1-p_c^{\smallFSOSP}$) \\
&&&&  {\sl iii, iv, v${}^2$, vi${}^2$} ($p\ge 1-p_c^{\smallFSOSP}$) \\
$\NSEWalt$ $\NE$ & 1 & $1$ & phase trans. &  {\sl iii, iv, v${}^1$} ($p<1-p_c^{\smallOTSP}$) \\
&&&&  {\sl iii, iv, v${}^2$, vi${}^2$} ($p\ge 1-p_c^{\smallOTSP}$)  \\
$\NSEWalt$ $\WE$ & 1 & $1$ & $1$ &  {\sl iii, iv, v${}^2$, vi${}^2$} \\
$\NSEWalt$ $\SWE$ & 1 & $1$ & $1$ & {\sl iii, iv, v${}^2$, vi${}^2$} \\
\hline
\end{tabular}
\end{center}
\caption{Table of connectivity results for 2-dimensional 2-valued degenerate random environments.}
\label{tab:connections}
\end{table}

\noindent {\bf Notes to Table \ref{tab:connections}} 
\begin{enumerate}
\item[(i)] is infinite with probability $>0 \Longleftrightarrow \mc{C}_o$ is.
\item[(ii)] Phase transition: $\mc{C}_o$ is finite for $p<p_c$, and $\infty$ with probability $>0$ if $p>p_c$.
\item[(iii)] All $\mc{C}_x$ intersect.
\item[(iv)] All infinite $\mc{B}_x$ intersect.
\item[(v)] \begin{enumerate}
                \item[${}^1$] All infinite $\mc{B}_x$ are blocked (above or below)
                \item[${}^2$] All infinite $\mc{B}_x$ equal $\Z^2$
                \end{enumerate}
\item[(vi)] \begin{enumerate}
                 \item[${}^1$] $\exists!$ gigantic $\mc{M}$-component.
                 \item[${}^2$] All $\mc{M}_x=\Z^2$.
                 \item[${}^3$] There are multiple infinite $\mc{M}_x$. 
                 \end{enumerate}
\end{enumerate}
\medskip


\end{document}